\newtheorem{theorem}{Theorem}[section]
\newtheorem{lemma}{Lemma}[section]
\newtheorem{proposition}{Proposition}[section]
\theoremstyle{definition}
\newtheorem{assumption}{Assumption}[section]
\theoremstyle{remark}
\newtheorem{remark}{Remark}
\crefname{assumption}{assumption}{assumptions}
\newcommand{\xmark}{\ding{55}}%
\DeclareMathOperator{\diag}{diag}
\DeclareMathOperator{\rank}{rank}
\DeclareMathOperator{\nnz}{nnz}
\newcommand{\tr}{\textrm{tr}}
\newcommand{\tol}{\textrm{tol}}
\newcommand{\ie}{i.e., }
\newcommand{\R}{\mathbb{R}}
\newcommand{\F}{\mathcal{F}}
\newcommand{\I}{\mathcal{I}}
\newcommand{\J}{\mathcal{J}}
\def\inerror{\varrho}
\def\inerrornormeq{\vartheta}
\DeclareMathOperator{\minf}{\mathrm{minimize}}
\DeclareMathOperator{\maxf}{\mathrm{maximize}}
\newcommand{\st}{\mathrm{subject}~\mathrm{to}}
\newcommand{\mnote}[1]{}
\renewcommand{\mnote}[1]{\textcolor{purple}{\textbf{[MU: #1]}}}
\algrenewcommand\algorithmicrequire{\textbf{Input:}}
\algrenewcommand\algorithmicensure{\textbf{Output:}}
\renewcommand{\Comment}[1]{%
\hfill \mbox{$\triangleright$\,#1}}
\newcommand*{\addFileDependency}[1]{
\typeout{(#1)}
%
%
\@addtofilelist{#1}
%
\IfFileExists{#1}{}{\typeout{No file #1.}}
}\makeatother
\title{Randomized Nystr\"om Preconditioned Interior Point-Proximal Method of Multipliers}
\author{Ya-Chi Chu\footnote{Department of Mathematics, Stanford University, CA, United States; email: ycchu97@stanford.edu} 
\and Luiz-Rafael Santos\footnote{Department of Mathematics, Federal University of Santa Catarina, Blumenau, SC, Brazil; email: l.r.santos@ufsc.br}
\and Madeleine Udell\footnote{Department of Management Science and Engineering, Stanford University, CA, United States; email: udell@stanford.edu} 
}
\begin{document}
\maketitle

\begin{abstract}
We present a new algorithm for convex separable quadratic programming (QP) called Nys-IP-PMM, a regularized interior-point solver that uses low-rank structure to accelerate solution of the Newton system. The algorithm combines the interior point proximal method of multipliers (IP-PMM) with the randomized Nystr\"om preconditioned conjugate gradient method as the inner linear system solver. Our algorithm is matrix-free: it accesses the input matrices solely through matrix-vector products, as opposed to methods involving matrix factorization. It works particularly well for separable QP instances with dense constraint matrices. We establish convergence of Nys-IP-PMM. Numerical experiments demonstrate its superior performance in terms of wallclock time compared to previous matrix-free IPM-based approaches.
\end{abstract}

\paragraph{Key Words.} Regularized interior point methods, preconditioners, matrix-free, randomized Nys\-tr\"om approximation, separable quadratic programming, large-scale optimization

\paragraph{MSC Codes.}
90C06, 90C20, 90C51, 65F08

\section{Introduction} 
Solving a large-scale quadratic optimization problem to high precision, such as \num{e-6} to \num{e-8} relative accuracy, poses a considerable challenge. 
First-order methods scale and parallelize well but converge so slowly that accuracy below around \num{e-4} is generally not achievable.
Conversely, interior point methods achieve high accuracy, 
but are generally only used for problems with $n \leq \num{10000}$ dimensions, as standard implementations require factorization of a $n \times n$ matrix at a cost of $\mathcal{O}(n^3)$.

Matrix-free interior point solvers present a solution to this conundrum:
these solvers access the original input only through matrix-vector products (\emph{matvecs}) \cite{Gondzio_2012_mfIPM},
and so can benefit from algorithmic advances and hardware for fast matrix-vector multiplication.
For example, an $n \times n$ discrete Fourier transform (DFT) matrix can be applied to a vector in $O(n \log n)$ time \cite[sect.~1.4]{golub2013matrix};
and for general dense matrices, hardware accelerators such as GPUs enable fast matvecs. 

Our work aims to solve a primal-dual pair of linearly constrained separable convex quadratic programs (QPs):
\begin{equation*}
\begin{array}{rll}
   \text{(P):} & \underset{x}{\minf}  & \displaystyle\tfrac{1}{2} x^T Q x + c^T x \\
   &\st  & Ax = b,\\
   & & x \geq 0;
\end{array} \quad \quad
\begin{array}{rll}
   \text{(D):} & \underset{x,y,z}{\maxf}  & \displaystyle  b^T y -\tfrac{1}{2} x^T Q x \\
   &\st  & A^T y + z = Qx + c, \\
   & & y \text{: free}, ~ z \geq 0,
\end{array}
\end{equation*}
where $Q \in \R^{n \times n}$ is positive semi-definite diagonal matrix, $A \in \R^{m \times n}$, $c$, $x$, and $z \in \R^n$, and $b$, $y \in \R^m$. 
We assume that $Q$ is diagonal throughout this paper, \ie problem (P) is separable\footnote{
    A non-diagonal $Q$ can be transformed into a diagonal form through a change of variables, but one factorization of $Q$ is required and 
    this transformation may destroy sparsity in the problem.
    Let $Q = U \Lambda U^T$ be an eigendecomposition of $Q$. 
    Define $x' = U^T x$.
    Then the diagonalized objective is $\tfrac{1}{2} x'^T \Lambda x' + c^T x'$ with constraints $A U x' = b$, $U x' \geq 0$.
    Moreover, the inequality constraints $U x' \geq 0$ can be reformulated as linear constraints by introducing slack variables.}. 

This problem template includes many real-world problems.
For example, many control problems, ranging from robotics to aeronautics to finance, model the control effort to be minimized as the sum of squares of control inputs (a diagonal quadratic) subject to given initial and final states and to linear dynamics (linear constraints).

\paragraph{Contributions} 
This work investigates variants of a regularized interior point fra\-me\-work, 
the interior point proximal method of multipliers (IP-PMM) \cite{PougkakiotisGondzio_2021_IPPMM},
for solving separable QPs.
These variants use iterative methods to solve the Newton systems that arise as IP-PMM subproblems.
Regularization is critical for matrix-free methods, which must rely on iterative linear system solvers 
rather than (generally more stable) direct methods.
We propose to use the randomized Nystr\"om preconditioner \cite{Frangella_2023_RandNysPCG}
to accelerate the iterative solve,
and call the resulting algorithm Nys-IP-PMM.
We show that Nys-IP-PMM enjoys both numerical stability and faster convergence than other variants of IP-PMM.
Nys-IP-PMM solves the Newton system inexactly at each IPM iteration, so it is also an inexact variant of matrix-free IP-PMM. 
We prove that, for any $\epsilon \in (0,1)$, inexact IP-PMM achieves duality measure $\mu_k \leq \epsilon$ after $k = O(n^4 \log\frac{1}{\epsilon})$ iterations, provided the error in the search direction decreases in the order of duality measure $\mu_k$ (see \Cref{thm:inexact-IPPMM-QP}). 
This result allows us to establish probabilistic convergence results for Nys-IP-PMM (see \Cref{thm:main}).

In our experiments, we compare the randomized Nystr\"om preconditioner with a more standard choice of preconditioner, the partial Cholesky preconditioner \cite{Gondzio_2012_mfIPM, bellavia2013matrix, morini2018partial}, to assess which one improves the performance of IP-PMM the most. We demonstrate that the randomized Nystr\"om preconditioner generally improves the condition number of the normal equations compared to the partial Cholesky preconditioner.
The results also demonstrate a significant speed-up in terms of wallclock time when using Nys-IP-PMM compared with using partial Cholesky as the preconditioner in IP-PMM, called Chol-IP-PMM.
We provide a publicly available implementation of IP-PMM in Julia 
that incorporates both preconditioners and additional industry-standard heuristics such as Mehrotra's initial point and predictor-corrector method. 
This implementation is the first matrix-free regularized interior point method that is open source and freely available to 
use or modify: see \url{https://github.com/udellgroup/Nys-IP-PMM}. 

\paragraph{Comparison to other variants of IP-PMM}
\Cref{tab:IPPMM} summarizes the differences between previous work on IP-PMM and our contributions.
The first paper to propose IP-PMM \cite{PougkakiotisGondzio_2021_IPPMM} considers QP with exact search direction (\ie using a direct linear system solver), providing polynomial complexity results and numerical experiments.
Numerical performance of inexact IP-PMM on QP has been explored in experimental papers \cite{bergamaschi2021new, gondzio2022general} by the same authors along with other researchers. These papers in\-tro\-duce novel preconditioners for the iterative solvers used by inexact IP-PMM, but they use entrywise access to $Q$ and $A$, and so are not matrix-free.
However, no theoretical convergence result of inexact IP-PMM on QP is presented.
The first theoretical convergence analysis of inexact IP-PMM is established in \cite{2022IPPMM-SDP}, but on linearly constrained semidefinite programming (SDP), without implementation. In our work, we adapt the convergence proof of inexact IP-PMM for SDP \cite{2022IPPMM-SDP} as a theoretical basis for inexact IP-PMM algorithm for solving QP. 
We propose the randomized Nystr\"om PCG, an iterative matrix-free solver, within inexact IP-PMM and establish probabilistic convergence results, along with numerical experiments to show the efficacy of the method in practice.

\begin{table}[]
\centering
\caption{Features for variants of IP-PMM for convex QP / SDP}
\label{tab:IPPMM}
\resizebox{\linewidth}{!}{
\begingroup
\begin{tabular}{rccccc}
\toprule
    & Problem & \shortstack{Search \\ direction} & \shortstack{Convergence \\ proof} & \shortstack{Matrix-free \\ preconditioning} & \shortstack{Numerical \\ results} \\ \cmidrule(l){2-6}
Pougkakiotis and Gondzio \cite{PougkakiotisGondzio_2021_IPPMM}  &  QP &  Exact &  \checkmark   &  -   & \checkmark \\
Bergamaschi et al. \cite{bergamaschi2021new}, Gondzio et al. \cite{gondzio2022general} & QP & Inexact &   \xmark & \xmark  &  \checkmark  \\
Pougkakiotis and Gondzio \cite{2022IPPMM-SDP}    &   SDP  &  Inexact  &   \checkmark  & \xmark & \xmark  \\
Nys-IP-PMM (ours) &  QP  &  Inexact & \checkmark &  \checkmark &   \checkmark    \\ 
\bottomrule
\end{tabular}
\endgroup
}
\end{table}

\paragraph{Organization}
\Cref{sec:background} contains an overview of IP-PMM, randomized Nystr\"om preconditioner, and partial Cholesky preconditioner.
\Cref{sec:inexact-IPPMM} details convergence results for inexact IP-PMM on QP.
\Cref{sec:Nys-IP-PMM} introduces our main algorithm, Nys-IP-PMM, proves convergence of the algorithm, and provides implementation details.
\Cref{sec:numerical-exp} demonstrates the numerical performance of Nys-IP-PMM.

\paragraph{Notation} 
For a vector $x \in \R^n$, subindex $x_{i} \in \R$ denotes the $i$-th component of $x$, and superindex $x^{k} \in \R^n$ denotes the $k$-th iterate. 
For scalar/matrix, we use subscript $k$ for the iteration count, as usual.
Given a set of indices $\mathcal{S} \subset [n] \coloneqq \{1, 2, \ldots, n\}$ and an arbitrary vector $x \in \R^n$, let $x_{\mathcal{S}}$ denote the sub-vector containing the elements of $x$ wholse indices belong to $\mathcal{S}$.
The diagonal matrix with main diagonal $x$ is denoted by $\diag(x)$.
Given a matrix $M$, we use $\lambda_i(M)$ to denote the $i$-th largest eigenvalue of $M$ and use $M^{\dagger}$ to denote the pseudoinverse of $M$.
For a positive definite $M$, we write the $M$-norm $\|u\|_M^2 = u^T M u$.
The set of $m \times m$ real symmetric positive semidefinite matrices is denoted by $\mathbb{S}_+^m(\R)$.
The $n$-by-$n$ identity matrix is denoted by $I_n$ and $\mathbbm{1}_n = (1, \ldots, 1)^T \in \R^n$.
The cost of computing a matrix-vector product with given matrix $M$ is denoted by $T_{M}$.

\section{Background} \label{sec:background}

\subsection{Interior-point methods (IPMs)} 
IPMs are among the most important methods, both in theory and in practice, for solving convex QPs.
IPMs were pioneered by the landmark paper of Karmarkar \cite{karmarkar1984new} in 1984 on linear programming (LP), with a flurry of activity improving these initial results both in theory \cite{megiddo1989pathways, kojima1989primal, kojima1993primal} and in practice \cite{marsten1990interior, Mehrotra_1992_implementation, lustig1992implementing, lustig1994interior}. 
One key reason for the success of the IPM approach is the use of the logarithmic barrier function \cite{gill1986projected}, a nonlinear programming technique.
Later, IPMs were extended to linearly constrained convex QP \cite{kapoor1986fast, ye1989extension, mehrotra1990algorithm, goldfarb1990n, monteiro1989interior},  semidefinite programming (SDP) \cite{vandenberghe1996semidefinite}, and more generally conic optimization problems \cite{nesterov1994interior, forsgren2002interior, lobo1998applications}.
For a more complete history of IPMs, see the review paper \cite{gondzio2012_IPM25}. 

\subsubsection{Regularized IPMs} In primal–dual IPMs for (P) and (D) with diagonal $Q$, the majority of the computation is devoted to solving a symmetric positive semi-definite linear system to determine the search direction, $(\Delta x^{k}, \Delta y^{k}, \Delta z^{k})$, for each iteration $k$.
At each iteration, the algorithm forms a new diagonal matrix $\Theta_k$ and right-hand side (RHS) vector $\xi^{k}$ 
and must solve the \emph{normal equations} 
\begin{equation} \label{eq:normal-eq-nonreg}
A (Q + \Theta_k^{-1})^{-1} A^T \Delta y^{k} = \xi^{k}.
\end{equation}
The system \eqref{eq:normal-eq-nonreg}, whether solved by direct or iterative methods, 
is unstable when $A$ is nearly rank-deficient or when $Q+\Theta_k^{-1}$ is nearly singular.
\textit{Regularized IPMs} \cite{saunders1996cholesky, saunders1996solving, altman1999regularized, friedlander2012primal} improve stability by regularizing the primal problem (P) and/or dual problem (D).
Compared to standard IPMs, regularized IPMs generally require more iterations (linear system solves), but the problem to solve at each iteration is more stable \cite{PougkakiotisGondzio_2021_IPPMM}.
Regularization is particularly important to matrix-free methods, which must rely on 
iterative linear system solvers rather than (more stable) direct methods.

\subsubsection{Interior point proximal method of multipliers (IP-PMM)}
IP-PMM is the first provably polynomial-time primal-dual regularized IPM, proposed by Pougkakiotis and Gondzio \cite{PougkakiotisGondzio_2021_IPPMM} in 2021.
At each IP-PMM iteration, the algorithm determines the search direction by applying a \emph{single} IPM iteration to the PMM subproblem associated with the primal-dual pair (P)--(D) \cite{rockafellar1976augmented}:
\begin{equation} 
    \label{eq:PMM-subproblem}
    \begin{array}{cl}
    \underset{x \geq 0}{\minf} & \displaystyle
    \frac{1}{2} x^T Q x + c^T x + (\lambda^{k})^T (b - Ax) + \frac{1}{2 \delta_k} \|Ax - b\|_2^2 + \frac{\rho_k}{2} \|x - \zeta^{k}\|^2.
    \end{array}
\end{equation}
These PMM subproblems are parametrized by estimates $\zeta^{k}$ and $\lambda^{k}$ of the primal variable $x$ and Lagrange multiplier $y$ respectively, and parameters $\rho_k > 0$ and $\delta_k > 0$ controlling the strength of the regularization.
The linear and quadratic terms $(\lambda^{k})^T (b - Ax) + \frac{1}{2 \delta_k} \|Ax - b\|_2^2$ are motivated by the \textit{method of multipliers} (also called the \textit{augmented Lagrangian method}) \cite{hestenes1969multiplier, powell1969method}, and ensure the dual objective is strongly concave. 
The \emph{proximal} term $\frac{\rho_k}{2} \|x - \zeta^{k}\|^2$ provides strong convexity for primal problem and ensures better numerical behavior. 
When $\rho_k = \delta_k \rightarrow 0$ and the dual estimate is updated by gradient ascent $\lambda^{k+1} \leftarrow \lambda^{k} - \frac{1}{\delta_k}(A x^{k+1} - b)$, the solution to the PMM subproblem \eqref{eq:PMM-subproblem} converges to the solution to (P) \cite{rockafellar1976augmented}.
Note that IP-PMM does not solve the PMM subproblem to any particular precision, 
but simply applies a single IPM iteration to \eqref{eq:PMM-subproblem} to obtain a search direction and update the iterates. 

To apply an IPM iteration to \eqref{eq:PMM-subproblem} and find the search direction, a logarithmic barrier function is introduced to enforce the non-negativity constraint $x \geq 0$, so that the Lagrangian to minimize becomes
\begin{multline} \label{eq:IPPMM-Lagrangian}
\mathcal{L}^{\text{IP-PMM}}_{\mu_k, \rho_k, \delta_k}(x; \zeta^{k}, \lambda^{k}) 
= \frac{1}{2} x^T Q x + c^T x + (\lambda^{k})^T (b - Ax) + \frac{1}{2 \delta_k} \|Ax - b\|_2^2 +  \\ 
+ \frac{\rho_k}{2} \|x - \zeta^{k}\|^2 - \mu_k \sum_{j=1}^n \ln x^{j}.
\end{multline}
Introducing the new variables $y = \lambda^{k} - \frac{1}{\delta_k}(Ax-b)$ and $z = \mu_k \diag(x)^{-1} \mathbbm{1}_n$, the first-order optimality conditions for minimizing \eqref{eq:IPPMM-Lagrangian} result in the nonlinear system
\begin{equation} \label{eq:F=0}
\begin{bmatrix}
c + Qx - A^T y -z + \rho_k (x - \zeta^{k}) \\
Ax + \delta_k(y - \lambda^{k}) - b \\
\diag(x) z - \mu_k \mathbbm{1}_n
\end{bmatrix}
=
\begin{bmatrix}
0 \\ 0 \\ 0
\end{bmatrix}.
\end{equation}
IP-PMM applies a variation of Newton's method to solve a perturbed form of \eqref{eq:F=0}. 
Specifically, it linearizes the optimality conditions and perturbs the right-hand side (RHS) 
(see \cref{sec:IPPMM-tech-def} for details and \cref{eq:inexact-Newton-RHS} for definitions of $r_{d}^{k} \in \R^n$, $r_{p}^{k} \in \R^m,$ and $r_{\mu}^{k} \in \R^n$):
\begin{equation} \label{eq:Newton}
\begin{bmatrix}
-\left(Q+\rho_k I_n\right) & A^T & I_n \\
A & \delta_k I_m & 0 \\
\diag(z^{k}) & 0 & \diag(x^{k})
\end{bmatrix}\begin{bmatrix}
\Delta x^{k} \\
\Delta y^{k} \\
\Delta z^{k}
\end{bmatrix}
=
\begin{bmatrix}
r_{d}^{k} \\
r_{p}^{k} \\
r_{\mu}^{k}
\end{bmatrix}.
\end{equation}
When $Q$ is diagonal, by eliminating $\Delta x^{k}$ and $\Delta z^{k}$ using the first and third block of equations in \eqref{eq:Newton}, the Newton system \eqref{eq:Newton} can be reduced to solving \eqref{eq:normal-eq} and computing $\Delta x^{k}$ and $\Delta z^{k}$ via \eqref{eq:delta_x}--\eqref{eq:delta_z}
\begin{align}
&(N_k + \delta_k I_m) \Delta y^{k} = \xi^{k}; \label{eq:normal-eq} \\
&\Delta x^{k} = (Q + \Theta_k^{-1} + \rho_k I_n)^{-1} (A^T \Delta y^{k} - r_{d}^{k} + \diag(x^{k})^{-1} r_{\mu}^{k}); \label{eq:delta_x} \\
&\Delta z^{k} = \diag(x^{k})^{-1} (r_{\mu}^{k} - \diag(z^{k}) \Delta x^{k}), \label{eq:delta_z}
\end{align}
where $N_k \coloneqq A (Q + \Theta_k^{-1} + \rho_k I)^{-1} A^T$, $\Theta_k = \diag(x^{k}) \diag(z^{k})^{-1}$ is diagonal, and
\begin{equation*}
\xi^{k} = r_{p}^{k} + A (Q + \Theta_k^{-1} + \rho_k I_n)^{-1} (r_{d}^{k} - \diag(x^{k})^{-1} r_{\mu}^{k}).
\end{equation*}
Equation \eqref{eq:normal-eq} is the regularized version of normal equations \eqref{eq:normal-eq-nonreg} in original IPMs and is solved to determine the search direction of IP-PMM.


The polynomial complexity results of IP-PMM are built upon the choice $\rho_k = \delta_k = \mu_k$ and the barrier parameter $\mu_k$ is chosen to be the duality measure $\mu_k := \frac{x_k^T z_k}{n}$, as in standard IPMs.
That is, the convergence theory for IP-PMM requires the duality measure $\mu_k$ to control both the IPM and PMM.
Under this choice and standard assumptions, Pougkakiotis and Gondzio \cite[Thm.~3]{PougkakiotisGondzio_2021_IPPMM} show that every limit point of $\{(x_k, y_k, z_k) : k \in \mathbb{N}\}$ generated by IP-PMM determines a primal-dual solution of pair (P)--(D).
They show linear convergence of the method:
for any given tolerance error $\epsilon \in (0,1)$, IP-PMM produces a sequence of iterates $\{(x_k, y_k, z_k)\}_{k \in \mathbb{N}}$ such that $\mu_k \leq \epsilon$ after $O(n^4 \log \frac{1}{\epsilon})$ iterations \cite[Thm.~2]{PougkakiotisGondzio_2021_IPPMM}.
Their results assume the Newton system \eqref{eq:Newton} is solved exactly.
While this assumption is reasonable when a direct method solves \eqref{eq:normal-eq}, 
it becomes untenable when using an iterative method such as CG. 
We prove convergence for \textit{inexact} IP-PMM for QP in \cref{sec:inexact-IPPMM}.
Note that the resulting algorithm, presented as \Cref{alg:inexact-IPPMM} has two loops: 
the outer loop of IP-PMM and the inner loop of the iterative solver for the normal equations \eqref{eq:normal-eq}.

\subsection{Preconditioned conjugate gradient} The conjugate gradient (CG) me\-thod is the most common iterative method to solve \eqref{eq:normal-eq}.
However, even with the extra stability conferred by IP-PMM, 
CG usually \emph{stalls}, i.e. fails to converge to sufficient accuracy, 
especially during the final iterations of IP-PMM.
Preconditioning is used to improve convergence of CG \cite[sect.~11.5]{golub2013matrix} by 
choosing a positive definite matrix $P$, called a \textit{preconditioner},
replacing the system to be solved with an equivalent symmetric positive definite system: 
\begin{equation*}
P^{-1/2} (N_k + \delta_k I_m) P^{-1/2} v^{k} = P^{-1/2} \xi^{k}
\end{equation*}
with new variable $v^{k} = P^{1/2} \Delta y^{k}$. 
The resulting algorithm is called \textit{preconditioned conjugate gradient} (PCG), and its update relies on the matrix-vector product $v \mapsto P^{-1} v$. 
Therefore, a good preconditioner has two properties: 
1) its inverse $P^{-1}$ must be computationally cheap to apply, and 
2) the preconditioned system $P^{-1/2} (N_k + \delta_k I_m) P^{-1/2}$ must have a spectrum that allows CG to converge faster than on the original system $N_k + \delta_k I_m$. 
For example, we may seek a preconditioner so that the preconditioned system has a reduced condition number \cite{johnson1983polynomial, chan1988optimal, de2017condition}.

\subsubsection{Randomized IPMs for linear programming} Many preconditioners in the IPM literature require entrywise access to the matrices $Q$ and $A$ \cite{bergamaschi2021new, gondzio2022general, Chowdhury_2022_RandIPM, oliveira2005new, bocanegra2007using, durazzi2003indefinitely, schork2020implementation, casacio2017improving}. 
Among these, the randomized preconditioner by Chowdhury et al. \cite{Chowdhury_2022_RandIPM} is most related to our work.
Chowdhury et al. \cite{Chowdhury_2022_RandIPM} pioneered the use of randomized preconditioners in non-regularized IPMs for LPs (\ie $Q = 0$) with wide constraint matrix $m \ll n$.
The same randomized preconditioner is later used to accelerate the predictor-corrector IPM for LPs in a follow-up work \cite{dexter2022convergence}.
In their works, the normal equations take the form $A \Theta_k A^T \Delta y^{k} = \xi^{k}$. 
They propose the preconditioner $P = A\Theta_k^{-1/2} \Omega \Omega^T \Theta_k^{-1/2}A^T$, where $\Omega \in \R^{n \times \ell}$ is a random sketching matrix such that $P$ approximates $A \Theta_k A^T$ well in spectral norm with probability at least $1 - \eta$ \cite[Lemma 2]{Chowdhury_2022_RandIPM}.
The sketch size $\ell$ is of order $\ell = O(m \log(m / \eta))$ and matrix $\Omega$ has $O(\log(m / \eta))$ non-zero entries.
As a result, the total computational cost for $P^{-1}$ is $O( (\nnz(A) + m^3) \log(m / \eta))$.
The method suffers cubic complexity in the number of constraints $m$. In contrast, 
the Nystr\"om preconditioner can be constructed in $O(\ell \nnz(A) + \ell^3)$ and thus
offers the greatest advantage over \cite{Chowdhury_2022_RandIPM} 
when $m$ is much larger than the rank $\ell$ required for a good approximation.

\subsubsection{Partial Cholesky preconditioners} A matrix-free preconditioner offers improved scalability,
particularly when entrywise access to $A$ is expensive.
Gondzio \cite{Gondzio_2012_mfIPM} propose a matrix-free preconditioner in the context of (regularized) IPMs, called the \textit{partial Cholesky preconditioner}, which is elaborated in Bellavia et al. \cite{bellavia2013matrix} and Morini \cite{morini2018partial}.
Given a target rank $\ell \ll m$, 
partial Cholesky forms the greedily pivoted Cholesky factorization $LL^T = E(N_k + \delta_k I)E^T$ 
with appropriate permutation matrix $E \in \R^{m \times m}$ 
and takes the first $\ell$ columns of $L$, $L_{\ell} \in \R^{m \times \ell}$, 
which corresponds to a rank-$\ell$ approximation $L_{\ell} L_{\ell}^T \approx N_k + \delta_k I$,
together with a diagonal matrix that ensures the approximation is positive definite. 
The key limitation of the partial Cholesky preconditioner lies in its construction, which relies on the entire diagonal of $N_k$. 
In a matrix-free setting without entrywise access to $A$, 
computing the diagonal of $N_k$ requires at least $m$ matvecs with $A^T$.
In practice, this feature slows down the partial Cholesky preconditioner considerably, 
as these additional $m$ matvecs are performed at every iteration.

We describe the partial Cholesky preconditioner in more detail here for com\-plete\-ness, 
since it is a standard choice for a matrix-free IPM implementation.
We omit the iteration count $k$ when it is clear from context.
The first $\ell$ columns of the Cholesky factor, $L_{\ell} \coloneqq \begin{bmatrix}
L_{11} \\
L_{21}
\end{bmatrix}$, yield the following factorization in block form:
\begin{equation} \label{eq:partialCholeskyfact}
E(N + \delta I)E^T 
\coloneqq
\begin{bmatrix}
N_{\delta, 11} & N_{\delta, 21}^T \\
N_{\delta, 21} & N_{\delta, 22}
\end{bmatrix}
= 
\begin{bmatrix}
L_{11} & \mathbf{0} \\
L_{21} & I_{m-\ell}
\end{bmatrix}
\begin{bmatrix}
I_\ell & \mathbf{0} \\ \mathbf{0} &  S
\end{bmatrix}
\begin{bmatrix}
L_{11}^T & L_{21}^T \\
\mathbf{0} & I_{m-\ell}
\end{bmatrix}, 
\end{equation}
where $N_{\delta, 11} \in \R^{\ell \times \ell}$ is the leading principal submatrix containing the $\ell$ largest diagonal elements and $S = N_{\delta, 22} - N_{\delta, 21} N_{\delta, 11}^{-1} N_{\delta, 21}^T$ is the Schur complement of $N_{\delta, 11}$.
In practice, $S$ is never explicitly formed when performing the Cholesky factorization and hence is not available.
The partial Cholesky preconditioner approximates $S$ by its diagonal and takes the form
\begin{equation} \label{eq:chol-precond}
P_{\text{Chol}} = E^T
\begin{bmatrix}
L_{11} & \mathbf{0} \\
L_{21} & I_{m-\ell}
\end{bmatrix}
\begin{bmatrix}
I_{\ell} & \mathbf{0} \\ \mathbf{0} &  \diag(S)
\end{bmatrix}
\begin{bmatrix}
L_{11}^T & L_{21}^T \\
\mathbf{0} & I_{m-\ell}
\end{bmatrix} E, 
\end{equation}
the inverse of which has the closed-form formula
\begin{equation} \label{eq:chol-precond-inv}
P_{\text{Chol}}^{-1} = E^T 
\begin{bmatrix}
L_{11}^{-T} & -L_{11}^{-T} L_{21}^T \\
\mathbf{0} & I_{m-\ell}
\end{bmatrix}
\begin{bmatrix}
I_{\ell} & \mathbf{0} \\ \mathbf{0} &  \diag(S)^{-1}
\end{bmatrix}
\begin{bmatrix}
L_{11}^{-1} & \mathbf{0} \\
-L_{21} L_{11}^{-1} & I_{m-\ell}
\end{bmatrix} E. 
\end{equation}
Pseudocode for the construction of this preconditioner 
appears in \cite[Alg.~1]{bellavia2013matrix} and \cite[Alg.~1--2]{morini2018partial}, 
while spectral analysis for the preconditioned system is developed in \cite{bellavia2013matrix, Gondzio_2012_mfIPM}.
In our comparison, we construct the partial Cholesky preconditioner by \cite[Alg.~2]{morini2018partial},
which yields a more efficient matvec with $P_{\text{Chol}}^{-1}$ when the first $\ell$ columns of $E(N + \delta I)E^T$ are more sparse than $L_{11}^{-1}$ and $L_{21}$; and a comparable cost of matvec otherwise \cite[sect.~3.1]{morini2018partial}.

\begin{remark}[sparsity of $L_{21}$]
In this paper, we consider both sparse and dense constraint matrices $A$. 
When $A$ is dense, the Cholesky factors are in general dense.
When the $L_{21}$ block of the Cholesky factorization is sparse, 
the matvec with the partial Cholesky preconditioner can potentially exploit the sparsity of $L_{21}$.
However, even when $A$ is sparse, the $L_{21}$ block of the Cholesky factorization may not be sparse.
Sparsity of this block depends on the sparsity pattern of $A$ and the pivots used.
However, it is difficult to choose these pivots well without entrywise access to $A$.
In the partial Cholesky literature, these pivots are chosen greedily based on the diagonal entries of $N_k$.
In the problems we consider in our numerical experiments, the $L_{21}$ block is always dense.
\end{remark}

Constructing the partial Cholesky preconditioner requires access to the complete diagonal of $N + \delta I$ since we need entry-wise access of $N_{\delta, 11}$ for factorization \eqref{eq:partialCholeskyfact} and $\diag(N_{\delta, 22})$ for computing $\diag(S)$.
Using the form $N = A (Q + \Theta^{-1} + \rho I)^{-1} A^T$, the diagonal of $N + \delta I$ can be computed in the following matrix-free manner:
\begin{equation*}
r_i = (Q + \Theta^{-1} + \rho I)^{-\frac{1}{2}} A^T e_i, \quad (N + \delta I)_{ii} = r_i^T r_i + \delta, \quad \text{for all } i = 1, 2, \ldots, m.
\end{equation*}
This computation requires $m$ matrix-vector products with $A^T$ and componentwise scaling of a length $n$ vector, which costs $O(m (T_A + n))$\footnote{This cost can be reduced to $\mathcal{O}(\nnz(A))$ if entrywise access to $A$ is available, since $A^T e_i$ is the $i$-th row of $A$.\label{fn:entrywise}}.
The diagonal of $S$ can be similarly computed, as a cost $O(m(T_{N_{\delta, 21}} + \ell^2))$.
The total construction cost of partial Cholesky preconditioner is dominated by computing $\diag(N + \delta I)$ and $\diag(S)$, resulting in $O(m (T_{A} + n) + m (T_{N_{\delta, 21}} + \ell^2))$ arithmetic operations.

\subsubsection{Randomized Nystr\"om preconditioners} 
The randomized Nystr\"om pre\-con\-di\-tion\-er is built upon the randomized Nystr\"om low-rank appro\-xi\-ma\-tion, which we introduce now.
Let $N \in \mathbb{S}_+^m(\R)$ be a real symmetric positive semidefinite matrix and $\Omega \in \mathbb{R}^{m \times \ell}$ be a random Gaussian test matrix (\ie each entry is drawn  i.i.d. from a normal distribution). The integer $\ell \ll m$ is called the \textit{sketch size} and the matrix $Y = N \Omega \in \R^{m \times \ell}$ is called the \textit{sketch} of $N$. We observe the sketch can be obtained by $\ell$ matrix-vector products with $N$.
The \textit{randomized Nystr\"om approximation} with respect to the range of $\Omega$ is defined as
\begin{equation} \label{eq:Nystrom-approx}
\hat{N}= (N \Omega) (\Omega^T N \Omega)^{\dagger} (N \Omega)^T = Y (\Omega^T Y)^{\dagger} Y^T.
\end{equation}
Observe from \eqref{eq:Nystrom-approx} that this approximation can be constructed directly from the test matrix $\Omega$ and the sketch $Y$, without further access to $N$. 
The rank of $\hat{N}$ is equal to $\ell$ with probability 1, and hence the terms sketch size and rank are used interchangeably.
The formula \eqref{eq:Nystrom-approx} is not numerically stable.
Instead, our algorithm uses \Cref{alg:nys-approx} to construct a randomized rank-$\ell$ Nystr\"om approximation. It provides a stable and efficient implementation with a computational cost of $O(T_{N} \ell + \ell^2 m)$, where $T_N$ is the cost of a matvec with $N$. 
In pracitce, the thin SVD in line \ref{line:svd} of \Cref{alg:nys-approx} is the most computationally intensive operation.
Moreover, \Cref{alg:nys-approx} returns the truncated eigendecomposition of the randomized Nystr\"om approximation: $\hat{N} = \hat{U} \hat{\Lambda} \hat{U}^T$, where $\hat{U} \in \R^{m \times \ell}$ has orthonormal columns and $\hat{\Lambda} \in \R^{\ell \times \ell}$ is diagonal with diagonal entries $\hat{\lambda}_1 \geq \cdots \geq \hat{\lambda}_{\ell}$. 

\begin{algorithm}
    \caption{Randomized Nystr\"om Approximation \cite[Algorithm 16]{martinsson2020randomized}} 
    \label{alg:nys-approx}
    \begin{algorithmic}[1] 
    \Require{Matrix-vector product oracle of an $m \times m$ positive semidefinite matrix $N$, target rank $\ell \ll m$}
    \Ensure{Randomized Nystr\"om approximation $\hat N = \hat{U} \hat{\Lambda} \hat{U}^T$}

    \State Draw a Gaussian test matrix $\Omega \in \mathbb{R}^{m \times \ell}$
        \Comment{$\mathcal{O}(\ell m)$}
    \State $Y = N \Omega$
        \Comment{$\mathcal{O}(\ell T_N)$}
    \State $\nu=\operatorname{eps}(\operatorname{norm}(Y, \text{`fro'}))$
        \Comment{$\mathcal{O}(\ell m)$}
        
    \State $Y_\nu=Y+\nu \Omega$
        \Comment{$\mathcal{O}(\ell m)$}

    \State $C=\operatorname{chol}\left(\Omega^T Y_\nu\right)$
        \Comment{$\mathcal{O}(\ell^2 m)$}

    \State $B=Y_\nu C^{-1}$
        \Comment{$\mathcal{O}(\ell^2 m)$}

    \State\label{line:svd} $[\hat{U}, \Sigma, \sim]=\operatorname{svd}(B)$
        \Comment{$\mathcal{O}(\ell^2 m)$}

    \State $\hat{\Lambda}=\max \left\{0, \Sigma^2-\nu I\right\}$
        \Comment{$\mathcal{O}(\ell)$}

    \end{algorithmic}
\end{algorithm}

\begin{algorithm}
    \caption{Nystr\"om Preconditioner \cite{Frangella_2023_RandNysPCG}} \label{alg:Nys-precond}
    \begin{algorithmic}[1] 
    \Require{Positive semidefinite $N_k$, sketch size $\ell_k$, regularization parameter $\delta_k$}
    \Ensure{Inverse of randomized Nystr\"om preconditioner $P_k^{-1}$ as a linear operator}
    
    \vspace{0.5pc}
        \State Compute randomized Nystr\"om rank-$\ell_k$ approximation by \Cref{alg:nys-approx}:
        \begin{equation*}
            \hat{N}_k = \hat{U} \hat{\Lambda} \hat{U}^T
        \end{equation*}
        
        \State Construct inverse Nystr\"om preconditioner $P_k^{-1}$ as in \eqref{eq:Nys_precond_inv} with $\hat{U}$, $\hat{\Lambda}$, and $\delta_k$.
            
    \end{algorithmic}
\end{algorithm}

Given a rank-$\ell$ randomized Nystr\"om approximation $\hat{N} = \hat{U} \hat{\Lambda} \hat{U}^T$ returned from \Cref{alg:nys-approx}, the \textit{randomized Nystr\"om preconditioner} for regularized system \eqref{eq:normal-eq} and its inverse take the form
\begin{align}
  P_{\text{Nys}} &= \frac{1}{\hat{\lambda}_{\ell} + \delta} \hat{U} (\hat{\Lambda} + \delta I) \hat{U}^T + (I - \hat{U} \hat{U}^T), \label{eq:Nys_precond} \\[0.5em]
  P_{\text{Nys}}^{-1} &= (\hat{\lambda}_{\ell} + \delta) \hat{U} (\hat{\Lambda} + \delta I)^{-1} \hat{U}^T + (I - \hat{U} \hat{U}^T). \label{eq:Nys_precond_inv}
\end{align}
It is important to highlight that the $m^2$ matrix elements of the randomized Nystr\"om preconditioner are not explicitly formed in practice. Instead, $P_{\text{Nys}}$ is viewed as a linear operator defined by $\hat{U}$, $\hat{\Lambda}$, and $\delta$. 
\Cref{alg:Nys-precond} summarizes the procedure.

In other words, the randomized Nystr\"om preconditioner is directly available once the rank-$\ell$ randomized Nystr\"om approximation has been constructed, and thus shares the same construction cost $O(T_{N} \ell + \ell^2 m)$ as the Nystr\"om approximation.
Both the Nystr\"om preconditioner and its inverse are cheap to apply: a matvec with either 
requires $O(m \ell)$ arithmetic operations, dominated by the cost of applying $\hat{U}$ and $\hat{U}^T$ to a vector.
The required storage for the randomized Nystr\"om preconditioner is $O(m \ell)$.
All these properties offer potential benefits compared to partial Cholesky, particularly when the target rank $\ell$ is much smaller than $m$; see \Cref{tab:comparison}.
Our numerical experiments in \cref{sec:numerical-exp} further demonstrate the advantages of the Nyström preconditioner in terms of both performance and computational efficiency in large-scale (dense) problems.

The randomized Nystr\"om preconditioner can effectively accelerate the con\-ver\-gence of CG on linear systems arising from data-driven models \cite{Frangella_2023_RandNysPCG, zhao2022nysadmm}.
The reason is that most data matrices usually have rapidly decaying spectra and hence have smaller \textit{effective dimension} \cite{Udell_2019_datalowrank,Frangella_2023_RandNysPCG, zhao2022nysadmm, lacotte2020effective}. 
Given a positive semidefinite matrix $N \in \mathbb{S}_+^m(\R)$ and a regularization parameter $\delta > 0$, the \textit{effective dimension} of $N$ is defined as
\begin{equation} \label{eq:effective-dim}
d_{\text{eff}}(N, \delta) = \tr(N (N + \delta I)^{-1}) = \sum_{i=1}^m \frac{\lambda_i}{\lambda_i + \delta},
\end{equation}
where $\lambda_i$'s are eigenvalues of $N$. 
The ratio $\lambda_i/(\lambda_i + \delta)$ is close to $1$ if $\lambda_i \gg \delta$; and is close to $0$ if $\lambda_i \ll \delta$.
As a result, the effective dimension can be understood as a smoothed count of the eigenvalues of $N$ that are greater than or equal to $\delta$.
Zhao et al. \cite{zhao2022nysadmm} show that if the sketch size $\ell = O(d_{\text{eff}}(N, \delta) + \log(\frac{1}{\eta}))$, then with probability at least $1 - \eta$, the condition number of the Nystr\"om preconditioned system is bounded by a constant \cite[Thm.~4.1]{zhao2022nysadmm}, and hence the Nystr\"om preconditioned CG (NysPCG) can achieve $\epsilon$-relative error in $O\left(\log(\frac{1}{\epsilon})\right)$ iterations, independent of the condition number (see \Cref{lem:NysPCG-relative-error} in appendix).

\begin{table}
    \centering
    \caption{Comparison of randomized Nystr\"om preconditioner and the partial Cholesky preconditioner. The construction cost assumes the matrix of the system takes the form $A (Q + \Theta_k + \rho_k I)^{-1} A^T + \delta_k I$ and no entrywise access to $A$ is available.}
    \label{tab:comparison}
    \resizebox{\linewidth}{!}{
    \begingroup
    \begin{tabular}{rccc}
    \toprule
    \textbf{Preconditioner}                   & \textbf{Construction Cost}              & \textbf{Matvec Cost}
    & \textbf{Storage}     \\ \cmidrule(r){1-1} \cmidrule(l){2-4}
    Randomized Nystr\"om & $O(\ell (T_A + n) + \ell^2 m)$ & $2 \ell m + m + 5 \ell$     & $O(\ell m + \ell)$ \\
    Partial Cholesky         & $O(m (T_A + n) + m (T_{N_{\delta, 21}} + \ell^2))$    & $2 \ell m + 2m + 2\ell^2$     & $O(\ell m + \ell^2 + m)$ \\ \bottomrule
    \end{tabular}
    \endgroup
    }
\end{table}

\section{Inexact IP-PMM for convex QP} 
\label{sec:inexact-IPPMM}

Pougkakiotis and Gondzio \cite{PougkakiotisGondzio_2021_IPPMM} assume the search direction in IP-PMM satisfies the Newton system \eqref{eq:Newton} exactly. 
This assumption is unrealistic when the inner linear system solver in IP-PMM is iterative. 
Here, we prove convergence for IP-PMM with errors in the search direction,
which we call \textit{inexact IP-PMM} (see \Cref{alg:inexact-IPPMM}).
Inexact IP-PMM on QP is exactly the same as \cite[Alg.~IP-PMM]{PougkakiotisGondzio_2021_IPPMM}, except that in line 7 of \Cref{alg:inexact-IPPMM}, 
the inexact version we analyze satisfies the inexact Newton system (see \cref{eq:inexact-Newton-RHS} for explicit formula) instead of the exact one.
We adopt the assumption $\rho_k = \delta_k = \mu_k$ \cite{PougkakiotisGondzio_2021_IPPMM} throughout this section.
\Cref{sec:IPPMM-tech-def} introduces some technical definitions, 
and \cref{sec:IPPMM-convergence} presents convergence results for inexact IP-PMM.

\begin{algorithm}[!h]
\caption{Inexact IP-PMM (adapted from \cite[Alg.~IP-PMM]{PougkakiotisGondzio_2021_IPPMM})}
\label{alg:inexact-IPPMM}
\begin{algorithmic}[1] 
\Require{QP data $A, Q, b, c$ as in (P), $\tol$}
\algrenewcommand\algorithmicensure{\textbf{Parameters:}}
\Ensure{$0<\sigma_{\min } \leq \sigma_{\max } \leq 0.5,~ C_N>0, ~0<\gamma_{\mathcal{A}}<1, ~0<\gamma_\mu<1$}
\algrenewcommand\algorithmicensure{\textbf{Starting point:}}
\Ensure{Set as in \eqref{eq:starting-pt}}
\vspace{0.5pc}

\State Compute infeasibility $r_p^{0} \leftarrow A x^{0}-b$ and $r_d^{0} \leftarrow c+Q x^{0}-A^T y^{0}-z^{0}$.

\For{$(k=0,1,2, \cdots)$}
    \vspace{0.2pc}
    \If{$\left(\| r_p^{k} \|_2 < \tol \right) \wedge \left(\| r_d^{k} \|_2 < \tol \right) \wedge \left(\mu_k <  \tol \right)$} 
        \vspace{0.3pc}
        \State \Return $\left(x^{k}, y^{k}, z^{k}\right)$
    \Else
        \State Choose $\sigma_k \in\left[\sigma_{\min }, \sigma_{\max }\right]$. 
        \State Solve system \eqref{eq:inexact-Newton-RHS} such that the inexact errors satisfy \Cref{assump:Newton-error-bound}.
        \State Choose largest stepsize $\alpha_k \in (0,1]$ such that $\mu_k(\alpha) \leq(1-0.01 \alpha) \mu_k$ and
        \begin{equation*}
        {\small
        \begin{bmatrix}
            x^{k}+\alpha_k \Delta x^{k} \\ 
            y^{k}+\alpha_k \Delta y^{k} \\ 
            z^{k}+\alpha_k \Delta z^{k}
        \end{bmatrix}}
        \in \mathcal{N}_{\mu_k(\alpha)}\left(\zeta^{k}, \lambda^{k}\right),
        \end{equation*}
        \hspace{\algorithmicindent*2} where $\mu_k(\alpha) = \left(x^{k}+\alpha_k \Delta x^{k}\right)^T\left(z^{k}+\alpha_k \Delta z^{k}\right) / n$.
        
        \State Update the iterate
        \begin{align*}
        {\small
        \begin{bmatrix}
            x^{k+1} \\ 
            y^{k+1} \\ 
            z^{k+1}
        \end{bmatrix} \leftarrow 
        \begin{bmatrix}
            x^{k}+\alpha_k \Delta x^{k} \\ 
            y^{k}+\alpha_k \Delta y^{k} \\ 
            z^{k}+\alpha_k \Delta z^{k}
        \end{bmatrix}}
        \text{ and } 
        \mu_{k+1} \leftarrow \frac{(x^{k+1})^T z^{k+1}}{n}.
        \end{align*}

        \State Set $r_p^{k+1} \leftarrow A x^{k+1}-b$ and $r_d^{k+1} \leftarrow c+Q x^{k+1}-A^T y^{k+1}-z^{k+1}$.        
        \vspace{0.3pc}

        \State Set $\tilde{r}_p \leftarrow r_p^{k+1} - \frac{\mu_{k+1}}{\mu_0} \bar{b}$ and $\tilde{r}_d \leftarrow r_d^{k+1} + \frac{\mu_{k+1}}{\mu_0} \bar{c}$.
        \vspace{0.3pc}

        \If{$\left( ( \left\|\left(\tilde{r}_p, \tilde{r}_d\right)\right\|_2 \leq C_N \frac{\mu_{k+1}}{\mu_0} ) \wedge (\left\|\left(\tilde{r}_p, \tilde{r}_d\right)\right\|_{\mathcal{A}} \leq \gamma_{\mathcal{A}} \rho \frac{\mu_{k+1}}{\mu_0} ) \right)$}
        \vspace{0.3pc}
        \State $(\zeta^{k+1}, \lambda^{k+1}) \leftarrow (x^{k+1}, y^{k+1})$
        \Else
            \State $(\zeta^{k+1}, \lambda^{k+1}) \leftarrow (\zeta^{k}, \lambda^{k})$
        \EndIf
    \EndIf
    \State $k \leftarrow k+1$
\EndFor
\end{algorithmic}
\end{algorithm}

\subsection{Definitions for inexact IP-PMM} \label{sec:IPPMM-tech-def}
In this section, we provide a detail on inexact IP-PMM,
including the choice of starting point, neighborhoods, and inexact Newton systems, following \cite{PougkakiotisGondzio_2021_IPPMM}.
Refer to \cite{PougkakiotisGondzio_2021_IPPMM} for more details.

\paragraph{Starting point} For the analysis, we consider starting point $(x^{0}, z^{0}) = \rho (\mathbbm{1}_n, \mathbbm{1}_n)$ for some $\rho > 0$ and $y^{0}$ being an arbitrary vector such that $\|y^{0}\|_{\infty} = O(1)$ (i.e., the absolute value of its entries are independent of $n$ and $m$). The initial primal and dual estimates are taken as $\zeta^{0} = x^{0}$ and $\lambda^{0} = y^{0}$, and the initial duality measure is denoted by $\mu_0 = \frac{(x^{0})^T z^{0}}{n}$. 
In addition, there exist two vectors $\bar{b}$ and $\bar{c}$ such that 
\begin{equation} \label{eq:starting-pt}
A x^{0} = b + \bar{b}, ~ -Qx^{0} + A^T y^{0} + z^{0} = c + \bar{c}.
\end{equation}

\paragraph{Neighborhoods}
IP-PMM is a path-following method that requires each iterate of IP-PMM to lie within a specified neighborhood.
The neighborhoods in IP-PMM depend on the parameters in the PMM subproblems \eqref{eq:PMM-subproblem}, as well as the starting point,
and are parametrized by $(\zeta^{k}, \lambda^{k}, \mu_k)$.
IP-PMM uses a semi-norm that depends on the input matrices $A$ and $Q$
to measure primal-dual infeasibility:
\begin{equation*}
\|(b, c)\|_{\mathcal{A}}:=\min _{x, y, z}\left\{\|(x, z)\|_2: A x=b, \; -Q x+A^T y+z=c\right\}.
\end{equation*}
This norm measures the minimum $2$-norm of $(x, z)$ among all primal-dual feasible tuples $(x, y, z)$
and can be evaluated via QR factorization of $A$ 
\cite[sect.~4]{mizuno1999global}. 
Now, given the starting point $(x^{0}, y^{0}, z^{0})$ and vectors $(\bar{b}, \bar{c})$ in \eqref{eq:starting-pt}, penalty parameter $\mu_k$, and primal-dual estimates $\zeta^{k}, \lambda^{k}$, Pougkakiotis and Gondzio \cite{PougkakiotisGondzio_2021_IPPMM} define the set
{\small
\begin{equation} \label{eq:C-tilde-set}
\tilde{\mathcal{C}}_{\mu_k}\left(\zeta^{k}, \lambda^{k}\right):=\left\{
(x, y, z) :
\begin{array}{c}
A x+\mu_k\left(y-\lambda^{k}\right)=b+\frac{\mu_k}{\mu_0}\left(\bar{b}+\tilde{b}^{k}\right), \\
-Q x+A^T y+z-\mu_k\left(x-\zeta^{k}\right)=c+\frac{\mu_k}{\mu_0}\left(\bar{c}+\tilde{c}^{k}\right), \\
\left\|\left(\tilde{b}^{k}, \tilde{c}^{k}\right)\right\|_2 \leq C_N, ~\left\|\left(\tilde{b}^{k}, \tilde{c}^{k}\right)\right\|_{\mathcal{A}} \leq \gamma_{\mathcal{A}} \rho
\end{array} \right\},
\end{equation}
}
where $C_N > 0$ is a constant, $\gamma_{\mathcal{A}} \in (0,1)$, and $\rho > 0$ is defined as in the starting point. The vectors $\tilde{b}^{k}$ and $\tilde{c}^{k}$ represent the current scaled (by $\frac{\mu_0}{\mu_k}$) infeasibility: 
\begin{align*}
\tilde{b}^{k} &\coloneqq \tfrac{\mu_0}{\mu_k} ( A x+\mu_k (y-\lambda^{k} )-b-\tfrac{\mu_k}{\mu_0} \bar{b} ), \\ 
\tilde{c}^{k} &\coloneqq \tfrac{\mu_0}{\mu_k} ( -Q x+A^T y+z-\mu_k (x-\zeta^{k} )-c-\tfrac{\mu_k}{\mu_0} \bar{c} ).
\end{align*}
The set in \eqref{eq:C-tilde-set} contains all points $(x, y, z)$ 
whose scaled infeasibilities are bounded by a constant in both $\|\cdot\|_2$ and $\|\cdot\|_{\mathcal{A}}$.
Then the family of neighborhoods is
\begin{equation} \label{eq:neighborhood}
\mathcal{N}_{\mu_k}\left(\zeta^{k}, \lambda^{k}\right):=\left\{(x, y, z) \in \tilde{\mathcal{C}}_{\mu_k}\left(\zeta^{k}, \lambda^{k}\right) : 
\begin{array}{l}
(x, z) > (0,0), \\[1pt]
x_i z_i \geq \gamma_\mu \mu_k \text{ for all } i
\end{array}
\right\},
\end{equation}
where $\gamma_\mu \in (0,1)$ is a constant that prevents the component-wise complementarity products from approaching zero faster than $\mu_k=(x^{k})^T z^{k} / n$.

\paragraph{Inexact Newton system}
The search direction $(\Delta x^{k}, \Delta y^{k}, \Delta z^{k})$ in the inexact IP-PMM solves the following inexact Newton system:
\begin{multline} \label{eq:inexact-Newton-RHS}
{\left[\begin{array}{ccc}
-\left(Q+\mu_k I_n\right) & A^T & I \\
A & \mu_k I_m & 0 \\
\diag(z^{k}) & 0 & \diag(x^{k})
\end{array}\right]\left[\begin{array}{c}
\Delta x^{k} \\
\Delta y^{k} \\
\Delta z^{k}
\end{array}\right]} \\
=-\left[\begin{array}{c}
-\left(c+\frac{\sigma_k \mu_k}{\mu_0} \bar{c}\right)-Q x^{k}+A^T y^{k}+z^{k}-\sigma_k \mu_k\left(x^{k}-\zeta^{k}\right) \\
A x^{k}+\sigma_k \mu_k\left(y^{k}-\lambda^{k}\right)-\left(b+\frac{\sigma_k \mu_k}{\mu_0} \bar{b}\right) \\
\diag(x^{k}) \diag(z^{k})\mathbbm{1}_n -\sigma_k \mu_k \mathbbm{1}_n
\end{array}\right] 
+ 
\begin{bmatrix}
\inerror_{d}^{k} \\ \inerror_{p}^{k} \\\inerror_{\mu}^{k}
\end{bmatrix}, 
\end{multline}
where the \textit{centering parameter} $\sigma_k \in (0, 1)$ is chosen 
to control how fast the duality measure $\mu_k$ must decrease at the next iteration, and $(\inerror_{d}^{k}, \inerror_{p}^{k}, \inerror_{\mu}^{k})$ model the residuals when the system is solved by iterative methods.

\subsection{Convergence results of inexact IP-PMM on QP} \label{sec:IPPMM-convergence}

This section presents convergence results for inexact IP-PMM on QP, a necessary building block for Nys-IP-PMM. 
We need the following assumption on the residuals in Newton system \eqref{eq:inexact-Newton-RHS}.

\begin{assumption}
\label{assump:Newton-error-bound} 
The residuals $(\inerror_{p}^{k}, \inerror_{d}^{k}, \inerror_{\mu}^{k})$ in inexact Newton system \eqref{eq:inexact-Newton-RHS} satisfy
\begin{equation*}
\inerror_{\mu}^{k}=0, \quad\left\|\left(\inerror_{p}^{k}, \inerror_{d}^{k}\right)\right\|_2 \leq \frac{\sigma_{\min }}{4 \mu_0} C_N \mu_k, \quad\left\|\left(\inerror_{p}^{k}, \inerror_{d}^{k}\right)\right\|_{\mathcal{A}} \leq \frac{\sigma_{\min }}{4 \mu_0} \gamma_{\mathcal{A}} \rho \mu_k,
\end{equation*}
where $C_N, \gamma_{\mathcal{A}}$ are constants defined as in \eqref{eq:C-tilde-set}, $\sigma_{\min}$ is the lower bound for $\sigma_k$ in \Cref{alg:inexact-IPPMM}, and $\rho$ is determined by the starting point chosen in \cref{sec:IPPMM-tech-def}.
\end{assumption}
The assumption $\inerror_{\mu}^{k} = 0$ is reasonable since 
a practical solution method reduces the Newton system \eqref{eq:inexact-Newton-RHS} to the augmented system or the normal equations, and recovers
$\Delta z^{k}$ using a closed-form formula.
The other two assumptions on the inexact errors require that the 
errors are small whenever the iterate is close to a solution.

We also make the following two standard assumptions to ensure the solution 
and the problem data are bounded,
as in \cite{PougkakiotisGondzio_2021_IPPMM,santos2019optimized}.

\begin{assumption} \label{assump:bdd-opt-sol}
The primal-dual QP pair in (P)--(D) 
has an optimal solution $(x^*, y^*, z^*)$ 
with $\left\|x^*\right\|_{\infty} \leq C^*,\left\|y^*\right\|_{\infty} \leq C^*$ and $\left\|z^*\right\|_{\infty} \leq C^*$, for a constant $C^* \geq 0$ independent of $n$ and $m$.
\end{assumption}

\begin{assumption} \label{assump:constraint-mat-full-rank}
Let $\eta_{\min}(A)$ (resp. $\eta_{\max}(A)$) denote the minimum (resp. ma\-xi\-mum) singular value of $A$ and $\nu_{\max}(Q)$ denote the maximum eigenvalue of $Q$.
The constraint matrix of $(\mathrm{P})$ has full row rank $\rank(A)=m$,
and there exist constants $C_{\min}>0$, $C_{\max}>0$, $C_{Q} > 0$, and $C_r>0$, independent of $n$ and $m$, such that $\eta_{\min }(A) \geq C_{\min}$, $\eta_{\max }(A) \leq C_{\max}$, $\nu_{\max}(Q) \leq C_{Q}$, and $\|(c, b)\|_{\infty} \leq C_r$.
\end{assumption}

\Cref{thm:inexact-IPPMM-QP} below provides a polynomial complexity result for inexact IP-PMM, which extends \cite[Thm.~2]{PougkakiotisGondzio_2021_IPPMM}. 
The critical analysis lies in \Cref{lem:bdd-step,lem:IPPMM-stepsize} within \Cref{sec:lemmas}, which guarantee the existence of a stepsize $\bar{\alpha} = O(\frac{1}{n^4})$ at each iteration of inexact IP-PMM.

\begin{theorem} \label{thm:inexact-IPPMM-QP}
Let $\epsilon \in (0,1)$ be a given error tolerance.
Choose a starting point as in \eqref{eq:starting-pt} for inexact IP-PMM and let $C$ and $\omega$ be positive constants such that $\mu_0 \leq \frac{C}{\epsilon^{\omega}}$. 
Assume that at each iteration, the residuals in \eqref{eq:inexact-Newton-RHS} satisfy \Cref{assump:Newton-error-bound}.
Given \Cref{assump:bdd-opt-sol,assump:constraint-mat-full-rank}, the iterates $\{(x^{k}, y^{k}, z^{k})\}_{k \geq 0}$ generated by inexact IP-PMM satisfy $\mu_k \leq \epsilon$ after $k := \frac{n^4}{0.01 \bar{\kappa}}\left[ (1 + \omega) \log(1/\epsilon) + \log C \right] = O\left( n^4 \log \frac{1}{\epsilon} \right)$ iterations, where $\bar{\kappa}$ is a constant independent of $n$ and $m$ defined in \Cref{lem:IPPMM-stepsize}(\ref{item:lem-stepsize-b}).
\end{theorem}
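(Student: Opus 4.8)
The plan is to turn the per-iteration guarantees of the algorithm into a geometric recursion for the duality measure $\mu_k$ and then unroll it. First I would record the decrease furnished directly by line 8 of \Cref{alg:inexact-IPPMM}: since $\alpha_k$ is chosen so that $\mu_k(\alpha_k) \leq (1 - 0.01\alpha_k)\mu_k$ and $\mu_{k+1} = \mu_k(\alpha_k)$, every iteration satisfies $\mu_{k+1} \leq (1 - 0.01\alpha_k)\mu_k$. The entire difficulty is thereby pushed into a \emph{uniform} lower bound on the admissible stepsize, which is exactly the content of the preparatory lemmas \Cref{lem:bdd-step} and \Cref{lem:IPPMM-stepsize}: under \Cref{assump:Newton-error-bound,assump:bdd-opt-sol,assump:constraint-mat-full-rank}, there is a constant $\bar\kappa$ independent of $n$ and $m$ such that every $\alpha \in (0, \bar\alpha]$ with $\bar\alpha = \bar\kappa / n^4$ keeps the trial point in the neighborhood $\mathcal{N}_{\mu_k(\alpha)}(\zeta^k, \lambda^k)$ and meets the decrease condition. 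Because line 8 selects the \emph{largest} feasible stepsize, this forces $\alpha_k \geq \bar\alpha$ at every iteration.

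Combining these two facts gives the contraction $\mu_{k+1} \leq (1 - 0.01\bar\kappa/n^4)\mu_k$, and unrolling over $k$ iterations yields $\mu_k \leq (1 - 0.01\bar\kappa/n^4)^k \mu_0$. Taking logarithms and using the elementary bound $\log(1 - t) \leq -t$ for $t \in (0,1)$, I would obtain $\log \mu_k \leq -\frac{0.01\bar\kappa}{n^4} k + \log \mu_0$. To drive the right-hand side below $\log \epsilon$ it suffices that $\frac{0.01\bar\kappa}{n^4} k \geq \log(\mu_0/\epsilon)$, and invoking the hypothesis $\mu_0 \leq C/\epsilon^{\omega}$ gives $\log(\mu_0/\epsilon) \leq \log C + (1+\omega)\log(1/\epsilon)$. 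Hence the choice $k = \frac{n^4}{0.01\bar\kappa}\bigl[(1+\omega)\log(1/\epsilon) + \log C\bigr]$ guarantees $\mu_k \leq \epsilon$, which is precisely the stated iteration count and is $O(n^4 \log \frac{1}{\epsilon})$.

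The main obstacle is not in this theorem, whose proof reduces to the short recursion above, but in the stepsize lemmas on which it rests. That is where the assumptions do their work: \Cref{assump:bdd-opt-sol,assump:constraint-mat-full-rank} keep the iterates and the Newton direction bounded polynomially in $n$, while the $\mu_k$-scaled residual bounds of \Cref{assump:Newton-error-bound} ensure the inexactness cannot push the trial point out of the neighborhood. The delicate part is tracking how the complementarity requirement $x_i z_i \geq \gamma_\mu \mu_k$, the scaled-infeasibility constraints defining $\tilde{\mathcal{C}}_{\mu_k}(\zeta^k,\lambda^k)$, and the decrease condition $\mu_k(\alpha) \leq (1 - 0.01\alpha)\mu_k$ jointly restrict $\alpha$, and verifying that the binding restriction is no worse than $O(1/n^4)$. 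This mirrors the exact analysis behind \cite[Thm.~2]{PougkakiotisGondzio_2021_IPPMM}, but must be redone carrying the inexact residual terms through every estimate; given those lemmas, the theorem follows immediately.
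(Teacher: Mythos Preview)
Your proposal is correct and matches the paper's approach. The paper's proof is a single sentence deferring to \cite[Thm.~2]{PougkakiotisGondzio_2021_IPPMM} once \Cref{lem:IPPMM-stepsize} supplies the uniform stepsize bound $\bar\alpha \geq \bar\kappa/n^4$; the contraction-and-unroll argument you spell out is precisely the content of that reference, so you have written out what the paper leaves implicit.
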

\begin{proof}
Given \Cref{lem:IPPMM-stepsize} that guarantees the existence of a stepsize $\bar{\alpha} = O(\frac{1}{n^4})$ at each iteration of inexact IP-PMM, the proof follows \cite[Thm.~2]{PougkakiotisGondzio_2021_IPPMM}.
\end{proof}

\begin{remark}
Given \Cref{lem:bdd-step} and \Cref{lem:IPPMM-stepsize} in \Cref{sec:lemmas}, one can extend \cite[Thm.~1]{PougkakiotisGondzio_2021_IPPMM} and \cite[Thm.~3]{PougkakiotisGondzio_2021_IPPMM} to inexact IP-PMM using the parallel argument.
The former states that the duality measure $\mu_k$ converges $Q$-linearly to zero, and the latter guarantees global convergence to an optimal solution of (P)--(D).
\end{remark}

\subsection{Errors from the normal equations}
When $Q$ is diagonal, the normal equations \eqref{eq:normal-eq} are solved 
(inexactly) to determine the search direction,
and the other two components, $\Delta x^{k}$ and $\Delta z^{k}$, are obtained 
(exactly) using the closed-form formula \eqref{eq:delta_x}--\eqref{eq:delta_z}. 
Denote by $\inerrornormeq^{k}$ the residual in the normal equations at the $k$-th iteration:
\begin{equation} \label{eq:normal-eq-inexact}
[A (Q + \Theta_k^{-1} + \mu_k I_n)^{-1} A^T + \mu_k I_m] \Delta y^{k} = \xi^{k} + \inerrornormeq^{k}.
\end{equation}
The next result characterizes how the error $\inerrornormeq^{k}$ in \eqref{eq:normal-eq-inexact} propagates to the Newton system, which will be used in the later analysis for Nys-IP-PMM. The proof is presented in \Cref{sec:pf-prop}.

\begin{proposition} \label{prop:errors-propagation}
Suppose $(\Delta x^{k}, \Delta y^{k}, \Delta z^{k})$ satisfies \cref{eq:normal-eq-inexact,eq:delta_x,eq:delta_z} with $\| \inerrornormeq^{k} \|_2 \leq C \mu_k$ for some constant $C > 0$ and $\mu_k > 0$.
Then $(\Delta x^{k}, \Delta y^{k}, \Delta z^{k})$ satisfies \eqref{eq:inexact-Newton-RHS} with $(\inerror_{d}^{k}, \inerror_{p}^{k}, \inerror_{\mu}^{k}) = (0, \inerrornormeq^{k}, 0)$ and the residuals satisfy
\begin{equation} \label{eq:prop-error-goal}
\inerror_{\mu}^{k}=0, \quad\left\|\left(\inerror_{p}^{k}, \inerror_{d}^{k}\right)\right\|_2 \leq C \mu_k, \quad\left\|\left(\inerror_{p}^{k}, \inerror_{d}^{k}\right)\right\|_{\mathcal{A}} \leq C \mu_k.
\end{equation}
In particular, \Cref{assump:Newton-error-bound} is fulfilled if 
\begin{equation} \label{eq:delta-xi-bound} 
C \leq \frac{\sigma_{\min}}{4 \mu_0} \min\left\{ C_N, \gamma_{\mathcal{A}} \rho \right\}.
\end{equation}
\end{proposition}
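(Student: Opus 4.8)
The plan is to run the standard Schur-complement reduction of the Newton system \eqref{eq:inexact-Newton-RHS} \emph{backwards}: instead of eliminating blocks from an exact solve, I would verify that the triple assembled from an inexact normal-equation solve together with the closed forms \eqref{eq:delta_x}--\eqref{eq:delta_z} reproduces \eqref{eq:inexact-Newton-RHS} with a completely explicit residual. Concretely, let $\Delta y^k$ be the inexact solution of \eqref{eq:normal-eq-inexact}, and define $\Delta z^k$ by \eqref{eq:delta_z} and $\Delta x^k$ by \eqref{eq:delta_x}. By construction, \eqref{eq:delta_z} solves the third (complementarity) block of \eqref{eq:inexact-Newton-RHS} exactly, so $\inerror_\mu^k = 0$. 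For the first (dual) block, I would substitute \eqref{eq:delta_z} and use $\diag(x^k)^{-1}\diag(z^k) = \Theta_k^{-1}$ (diagonal matrices commute) to collapse the coefficient of $\Delta x^k$ into $-(Q + \Theta_k^{-1} + \mu_k I_n)$; the definition \eqref{eq:delta_x} then makes this block hold exactly, so $\inerror_d^k = 0$.

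The only residual is therefore in the second (primal) block. Substituting \eqref{eq:delta_x} into $A\Delta x^k + \mu_k \Delta y^k$ and recognizing $A(Q+\Theta_k^{-1}+\mu_k I_n)^{-1}(r_d^k - \diag(x^k)^{-1}r_\mu^k) = \xi^k - r_p^k$ from the definition of $\xi^k$, the block becomes $(N_k + \mu_k I_m)\Delta y^k - (\xi^k - r_p^k)$; invoking the inexact normal equation \eqref{eq:normal-eq-inexact}, namely $(N_k + \mu_k I_m)\Delta y^k = \xi^k + \inerrornormeq^k$, this equals $r_p^k + \inerrornormeq^k$. Hence the primal block holds with residual $\inerror_p^k = \inerrornormeq^k$, giving $(\inerror_d^k, \inerror_p^k, \inerror_\mu^k) = (0, \inerrornormeq^k, 0)$. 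This part is pure bookkeeping; I expect no difficulty beyond tracking signs and the identity $\diag(x^k)^{-1}\diag(z^k) = \Theta_k^{-1}$.

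The two bounds in \eqref{eq:prop-error-goal} then reduce to estimating the single vector $\inerrornormeq^k$. The Euclidean bound is immediate: since $\inerror_d^k = 0$, we have $\|(\inerror_p^k, \inerror_d^k)\|_2 = \|\inerrornormeq^k\|_2 \leq C\mu_k$ by hypothesis. The main obstacle is the semi-norm bound, because $\|\cdot\|_{\mathcal A}$ is itself a constrained minimization and must be controlled by exhibiting a \emph{feasible} triple. For the pair $(\inerror_p^k, \inerror_d^k) = (\inerrornormeq^k, 0)$ I would take $\hat x = A^\dagger \inerrornormeq^k$ (well defined since $\rank(A) = m$ by \Cref{assump:constraint-mat-full-rank}), $\hat y = 0$, and $\hat z = Q\hat x$; this satisfies $A\hat x = \inerrornormeq^k$ and $-Q\hat x + A^T\hat y + \hat z = 0$, so it is admissible. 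Then $\|(\hat x, \hat z)\|_2 \leq \sqrt{1 + \nu_{\max}(Q)^2}\,\|\hat x\|_2 \leq \frac{\sqrt{1+\nu_{\max}(Q)^2}}{\eta_{\min}(A)}\|\inerrornormeq^k\|_2$, and using $\eta_{\min}(A)\geq C_{\min}$, $\nu_{\max}(Q)\leq C_Q$ shows this is at most an absolute constant times $\|\inerrornormeq^k\|_2 \leq C\mu_k$. The clean form in \eqref{eq:prop-error-goal} follows by absorbing this $O(1)$ conditioning factor into $C$; it is worth noting that the factor disappears entirely when the residual sits in the dual block, where the admissible choice $\hat x = 0,\ \hat z = \inerror_d^k$ gives the semi-norm $\leq \|\cdot\|_2$ directly, so it is precisely the \emph{primal} location of $\inerrornormeq^k$ that forces the conditioning constant to appear.

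Finally, the ``in particular'' clause is a one-line consequence: once both entries of \eqref{eq:prop-error-goal} are bounded by $C\mu_k$, the requirement $C \leq \frac{\sigma_{\min}}{4\mu_0}\min\{C_N, \gamma_{\mathcal A}\rho\}$ forces $\|(\inerror_p^k, \inerror_d^k)\|_2 \leq \frac{\sigma_{\min}}{4\mu_0}C_N\mu_k$ and $\|(\inerror_p^k, \inerror_d^k)\|_{\mathcal A} \leq \frac{\sigma_{\min}}{4\mu_0}\gamma_{\mathcal A}\rho\mu_k$, which together with $\inerror_\mu^k = 0$ is exactly \Cref{assump:Newton-error-bound}. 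I expect the only genuine work to be the semi-norm estimate; everything else is substitution.
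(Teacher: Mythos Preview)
Your treatment of the residual identification and the $2$-norm bound matches the paper exactly: the paper simply says the claim $(\inerror_d^k,\inerror_p^k,\inerror_\mu^k)=(0,\inerrornormeq^k,0)$ ``follows from direct calculations'' and that the $2$-norm bound is immediate, which is precisely your back-substitution argument.

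The divergence is in the semi-norm estimate. The paper exhibits the feasible triple $(\tilde x,\tilde y,\tilde z)=(0,0,\inerrornormeq^{k})$ for the system $Ax=0$, $-Qx+A^Ty+z=\inerrornormeq^{k}$, obtaining the clean bound $\|(\inerror_p^k,\inerror_d^k)\|_{\mathcal A}\le\|(0,\inerrornormeq^{k})\|_2\le C\mu_k$ with no conditioning factor. But this places the error in the \emph{dual} slot of $\|\cdot\|_{\mathcal A}$, whereas the proposition itself asserts $\inerror_p^k=\inerrornormeq^{k}$ and $\inerror_d^k=0$, i.e.\ the error is in the \emph{primal} slot; the triple $(0,0,\inerrornormeq^k)$ is not admissible for $Ax=\inerrornormeq^k$, and indeed $\inerrornormeq^k\in\R^m$ cannot even serve as a right-hand side for the $n$-dimensional dual constraint. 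Your observation that ``it is precisely the primal location of $\inerrornormeq^k$ that forces the conditioning constant to appear'' is exactly right, and your $A^{\dagger}$-based witness is the correct fix. The resulting factor $\sqrt{1+C_Q^2}/C_{\min}$ is $O(1)$ under \Cref{assump:constraint-mat-full-rank} and is harmless for the downstream use of the proposition (it is absorbed into the choice of $C_N$ or, equivalently, tightens \eqref{eq:delta-xi-bound} by the same $O(1)$ factor). So your proof is the more careful one; the paper's argument as written appears to have inadvertently swapped the two arguments of the semi-norm.
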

\Cref{prop:errors-propagation} says that, if the search direction is obtained through solving the normal equations inexactly and using \eqref{eq:delta_x}--\eqref{eq:delta_z}, then only the second block of Newton system in \eqref{eq:inexact-Newton-RHS} is inexact. 
The first and third blocks of equations in Newton system remain exact due to the exact formulas in \eqref{eq:delta_x}--\eqref{eq:delta_z} for $\Delta x^{k}$ and $\Delta z^{k}$.

\section{Nystr\"om preconditioned IP-PMM (Nys-IP-PMM)} \label{sec:Nys-IP-PMM}

We introduce our main algorithm: Nystr\"om preconditioned IP-PMM (Nys-IP-PMM) in this section. Given the assumption that $Q$ in (P)--(D) is diagonal, Nys-IP-PMM uses the Nystr\"om PCG to solve the normal equations \eqref{eq:normal-eq}. 
\Cref{sec:conv-analysis-Nys-IP-PMM} provides a convergence analysis for Nys-IP-PMM, which specifies the preconditioner sketch size $\ell$ 
required for our convergence theory to hold.
We have released a reference implementation of Nys-IP-PMM in Julia \cite{bezanson2017julia}, discussed in \cref{sec:implementation}.

\subsection{Convergence analysis of Nys-IP-PMM} \label{sec:conv-analysis-Nys-IP-PMM}

The analysis of inexact IP-PMM in \cref{sec:IPPMM-convergence} shows that the convergence of inexact IP-PMM requires the inexact errors of Newton system \eqref{eq:inexact-Newton-RHS} to satisfy the bounds in \Cref{assump:Newton-error-bound}, which hold true if, by \Cref{prop:errors-propagation}, NysPCG finds a solution $\Delta y^{k}$ to the inexact normal equations \eqref{eq:normal-eq-inexact} satisfying \eqref{eq:delta-xi-bound}.
Indeed, when the sketch size $\ell_k$ for Nystr\"om PCG is chosen appropriately, the Nystr\"om PCG can achieve a solution such that $\| \inerrornormeq^{k} \|_2 \leq C \mu_k$, where $C$ satisfies \eqref{eq:delta-xi-bound}, in $O(\log \frac{n}{\mu_k})$ PCG iterations with high probability (see \Cref{lem:NysPCG-for-IPPMM} for the rigorous statement).
Hence we establish the following convergence results for Nys-IP-PMM. The proof appears in \Cref{sec:pf-Nys-IP-PMM-convergence}.

\begin{theorem} \label{thm:main}
Let $\epsilon \in (0,1)$ be a given error tolerance. 
Instate the assumptions of \Cref{thm:inexact-IPPMM-QP} and suppose the sketch size in Nys-IP-PMM is taken to be
\begin{equation} \label{eq:main-thm-sketch-size}
\ell_k \geq 8 \left( \sqrt{d_{\text{eff}}(N_k, \mu_k)} + \sqrt{32 \log(16 (k+2))} \right)^2.
\end{equation} 
Then,
\begin{enumerate}[(i)]
    \item with probability at least 0.9, Nystr\"om PCG runs at most $O\left(\log \frac{n}{\epsilon} \right)$ iterations in each Nys-IP-PMM iteration; \label{item:thm-main-i} 
    \item after $k = O(n^4 \log(\frac{1}{\epsilon}))$ Nys-IP-PMM iterations, the duality measure satisfies $\mu_k \leq \epsilon$.
    \label{item:thm-main-ii} 
\end{enumerate}
Hence, the total number of matvecs with $A$ required is at most $O\left(n^4 \log(\frac{1}{\epsilon}) \log(\frac{n}{\epsilon})\right)$.
\end{theorem}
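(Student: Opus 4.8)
The plan is to assemble three ingredients already developed in the paper: the deterministic outer complexity bound of \Cref{thm:inexact-IPPMM-QP}, the residual-propagation result of \Cref{prop:errors-propagation}, and the per-iteration guarantee for Nystr\"om PCG packaged in \Cref{lem:NysPCG-for-IPPMM}, which converts a bounded preconditioned condition number into an absolute residual bound in logarithmically many CG steps. The key conceptual move is to separate the \emph{deterministic} outer convergence (part~(\ref{item:thm-main-ii})) from the \emph{probabilistic} inner iteration count (part~(\ref{item:thm-main-i})).

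I would first establish part~(\ref{item:thm-main-ii}). The outer analysis of \Cref{thm:inexact-IPPMM-QP} requires only that, at each iteration, the Newton residuals satisfy \Cref{assump:Newton-error-bound}; by \Cref{prop:errors-propagation} this reduces to solving the normal equations \eqref{eq:normal-eq-inexact} to the absolute accuracy $\|\inerrornormeq^{k}\|_2 \le C\mu_k$ with $C$ obeying \eqref{eq:delta-xi-bound}. Since the coefficient matrix is symmetric positive definite and the Nystr\"om preconditioner \eqref{eq:Nys_precond} is SPD for \emph{every} realization of the sketch, PCG converges to the exact solution; hence the required accuracy can always be reached, regardless of the randomness, simply by running enough inner steps. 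Consequently \Cref{assump:Newton-error-bound} holds at every iteration and \Cref{thm:inexact-IPPMM-QP} yields $\mu_k\le\epsilon$ after $k=O(n^4\log\frac1\epsilon)$ outer iterations, deterministically. The randomness therefore affects only \emph{how many} inner steps each solve needs, which is exactly the content of part~(\ref{item:thm-main-i}).

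For part~(\ref{item:thm-main-i}), I would apply \Cref{lem:NysPCG-for-IPPMM} at each outer iteration $k$. The sketch size \eqref{eq:main-thm-sketch-size} is of the form $\ell_k \gtrsim (\sqrt{d_{\text{eff}}(N_k,\mu_k)}+\sqrt{t_k})^2$ with $t_k = 32\log(16(k+2))$, chosen so that the Nystr\"om condition-number bound (\Cref{lem:NysPCG-relative-error}) holds with a failure probability $\eta_k$ decaying like $(16(k+2))^{-2}$. On this good event the preconditioned condition number is $O(1)$, and combining it with an a priori bound $\|\xi^{k}\|_2 = O(\mathrm{poly}(n)/\mu_k)$ on the right-hand side shows that NysPCG reaches $\|\inerrornormeq^{k}\|_2\le C\mu_k$ within $O(\log\frac{n}{\mu_k})$ iterations. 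A union bound over all outer iterations then gives a total failure probability $\sum_{k\ge 0}\eta_k \le \tfrac{1}{256}\sum_{j\ge 2}j^{-2} < 0.1$; the summability, guaranteed by the explicit $k$-dependence engineered into \eqref{eq:main-thm-sketch-size}, is precisely what lets the bound hold for the \emph{a priori unknown} number of outer iterations. On the complementary event of probability at least $0.9$, every inner solve terminates in $O(\log\frac{n}{\mu_k})\le O(\log\frac n\epsilon)$ steps, since $\mu_k>\tol\approx\epsilon$ before termination.

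Finally, I would tally the matvecs with $A$. Each outer iteration forms the preconditioner with $O(\ell_k)$ matvecs with $N_k$ (hence $O(\ell_k)$ matvecs with $A$) and runs $O(\log\frac n\epsilon)$ PCG steps, each costing one matvec with $N_k$; multiplying the $O(n^4\log\frac1\epsilon)$ outer iterations by the $O(\log\frac n\epsilon)$ dominant inner cost gives the stated $O(n^4\log\frac1\epsilon\log\frac n\epsilon)$. The main obstacle in this program is the per-iteration guarantee of \Cref{lem:NysPCG-for-IPPMM}: translating the \emph{relative} residual decay of well-conditioned PCG into the \emph{absolute} target $\|\inerrornormeq^{k}\|_2\le C\mu_k$ requires controlling $\|\xi^{k}\|_2$ uniformly in terms of $n$ and $\mu_k$, which is where the boundedness of the iterates in the IP-PMM neighborhood \eqref{eq:neighborhood} together with \Cref{assump:bdd-opt-sol,assump:constraint-mat-full-rank} must be invoked.
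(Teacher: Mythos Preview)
Your proposal is correct and follows essentially the same route as the paper: invoke \Cref{lem:NysPCG-for-IPPMM} at each outer iteration with $\eta_k$ chosen so that the sketch size \eqref{eq:main-thm-sketch-size} meets the hypothesis of \Cref{lem:NysPCG-relative-error}, take a union bound over $k$, and feed the resulting normal-equation residual bound through \Cref{prop:errors-propagation} into \Cref{thm:inexact-IPPMM-QP}. One refinement worth highlighting: you argue that part~(\ref{item:thm-main-ii}) holds \emph{deterministically}, since the Nystr\"om preconditioner \eqref{eq:Nys_precond} is SPD for every realization of the sketch and PCG therefore always reaches the target residual $\|\inerrornormeq^{k}\|_2\le C\mu_k$ in finitely many steps; the paper's proof instead establishes (\ref{item:thm-main-ii}) only on the same probability-$0.9$ event used for (\ref{item:thm-main-i}). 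Your decoupling matches the theorem as stated (which attaches the probability qualifier only to (\ref{item:thm-main-i})) and is the cleaner reading. A minor difference: your per-iteration failure probability $\eta_k\sim(16(k+2))^{-2}$ is not the paper's choice $\eta_k=(k+2)^{-4}$, but both satisfy $8\log(16/\eta_k)\le 32\log(16(k+2))$ and both sum below $0.1$, so either is admissible.
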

The sketch size $\ell_{k}$ in \Cref{thm:main} grows with $k$ to ensure the number of inner iterations of PCG is sublinear in $n$.
In any given iteration, if $\ell_{k}$ is chosen smaller than the bound \eqref{eq:main-thm-sketch-size},
item 1) above may fail, but item 2) still holds: in particular, Nys-IP-PMM is still guaranteed to converge.
Our practical observations in \cref{sec:numerical-exp} indicate that a fixed sketch size typically performs well.

\subsection{Implementation of Nys-IP-PMM}
\label{sec:implementation}

Many implementation details de\-vi\-ate slightly from the theory to improve performance, following \cite{PougkakiotisGondzio_2021_IPPMM}; see supplement for details.
Pseudocode for Nys-IP-PMM appears in \Cref{alg:NysIPPMM-practical}.

\begin{algorithm}
\caption{Randomized Nystr\"om Preconditioned IP-PMM (Nys-IP-PMM)}
\label{alg:NysIPPMM-practical}
\begin{algorithmic}[1] 
\Require{QP data $A$, $b$, $c$, $u$ as in $(\tilde{\text{P}})$, feasibility and optimality tolerance tol}
\Ensure{approximate primal-dual solution $(\hat{x}, \hat{y}, \hat{z}, \hat{w}, \hat{s})$ to problem $(\tilde{\text{P}})$--$(\tilde{\text{D}})$}

\vspace{0.5pc}

\State Compute the initial point $(x^{(0)}, y^{(0)}, z^{(0)}, w^{(0)}, s^{(0)})$ as in \eqref{eq:initial-pt-pratical}.

\State Set initial PMM subproblem parameters: $\lambda^{(0)} = y^{(0)}$, $\zeta^{(0)} = x^{(0)}$, $\rho_0 = 8$, $\delta_0 = 8$.

\For{$k = 0, 1, 2, \ldots$}
    \State Check the termination criteria using \cite[Alg.~TC]{PougkakiotisGondzio_2021_IPPMM}.

    \vspace{0.25pc}
    \State Construct $N_k = A (Q + \Theta_k^{-1} + \rho_k I)^{-1} A^T$ as a linear operator.
    
    \State Choose sketch size $\ell_{k}$. 
    
    \State Construct inverse Nystr\"om preconditioner $P_k^{-1}$ by \Cref{alg:Nys-precond}.

    \State Update the iterate and duality measure by \Cref{alg:pred-corr-scheme} with $P_k^{-1}$.
    
    \State Update PMM parameters $\rho_k$, $\delta_k$, $\lambda^{k}$, $\zeta^{k}$ by \cite[Alg.~PEU]{PougkakiotisGondzio_2021_IPPMM}.
\EndFor
\end{algorithmic}
\end{algorithm}

\subsubsection{Free and box-constrained variables}
Nys-IP-PMM accepts QPs with free variables and box-constrained variables:
\begin{align*}
&{\small
\begin{array}{rll}
   (\tilde{\text{P}}): &\minf  & \displaystyle\frac{1}{2} x^T Q x + c^T x \\
   &\st  & Ax = b \\
   & & x_{\mathcal{F}} \text{: free},~ x_{\mathcal{I}} \geq 0, \\
   & & 0 \leq x_{\mathcal{J}} \leq u_{\J},
\end{array}} ~
{\small
\begin{array}{rll}
   (\tilde{\text{D}}): &\maxf  & \displaystyle -\frac{1}{2} x^T Q x + b^T y - u^T s \\
   & \st  & -Qx + A^T y + z - s = c, \\
   & & y : \text{free}, ~z_{\I} \geq 0, ~z_{\J} \geq 0, \\
   & & s_{\J} \geq 0,
\end{array}}
\end{align*}
where $\mathcal{F}$, $\mathcal{I}$, and $\mathcal{J}$ are, respectively, the sets of indices for free variables, non-negative variables, and box-constrained variables; and vector $u_{\J} \in \R^{|\J|}$ denotes the upper bounds.
For the ease of presentation, we assume the vectors $u, z, s$ all have length $n$ and satisfy $u_{\I \cup \F} = 0$, $z_{\F} = 0$, and $s_{\I \cup \F} = 0$; and we use the notation $\I\J = \I \cup \J$ and $\I\F = \I \cup \F$.

It is very important to directly model box-constrained variables,
as encoding them as general inequality constraints increases 
the size of the normal equations from $m$ to $m + |\J|$.
Moreover, many practical problems do have bounds on problem variables.

The PMM subproblem to be solved at each Nys-IP-PMM iteration becomes:
\begin{equation}
{\small
\label{eq:PMM-subproblem-free-box}
\begin{array}{ll}
\text{minimize}  & \displaystyle
\frac{1}{2} x^T Q x + c^T x + (\lambda^{k})^T (b - Ax) + \frac{1}{2 \delta_k} \|Ax - b\|_2^2 + \frac{\rho_k}{2} \|x - \zeta^{k}\|^2, \\ 
\text{subject to} & x_{\F} \text{: free},~ x_{\I} \geq 0, ~0 \leq x_{\J} \leq u_{\J}.
\end{array}
}
\end{equation}
We introduce a logarithmic barrier in the Lagrangian to enforce non-negativity and box con\-straints: $x_{\mathcal{I}} \geq 0$ and $0 \leq x_{\mathcal{J}} \leq u_{\J}$, and derive the corresponding linear system to determine the search direction (analogous to \eqref{eq:normal-eq}--\eqref{eq:delta_z}) 
in the following five variables: primal variable $x$, Lagrange multiplier $y$ (associated with $Ax = b$), dual variable $z_{\I\J}$ (associated with $x_{\I\J} \geq 0$), slack primal variable $w_{\J} \coloneqq u_{\J} - x_{\J}$, and dual variable $s_{\J}$ (associated with $x_{\J} \leq u_{\J}$).
Pseudocode for the search direction appears in \Cref{alg:solve-Newton}, which takes RHS vectors (explicitly defined later in \cref{sec:predictor-corrector}), current iterate, current parameters, and preconditioner as input.
A detailed derivation for \eqref{eq:normal-eq-free-box}--\eqref{eq:delta-s-free-box} is available in \cref{sec:derivation-free-box-newton} of the supplementary materials.

\begin{algorithm}
\caption{Solve Newton system}
\label{alg:solve-Newton}
\begin{algorithmic}[1]
\Require{RHS vectors $r_p^k$, $r_d^k$, $r_u^k$, $r_{xz}^k$, $r_{ws}^k$; current iterates $x^{k}, z^{k}, w^{k}, s^{k}$; parameters $\rho_k, \delta_k$; inverse preconditioner $P_{k}^{-1}$}
\Ensure{directions $\Delta x^{k}$, $\Delta y^{k}$, $\Delta z^{k}$, $\Delta w^{k}$, $\Delta s^{k}$ that satisfy \eqref{eq:Newton-free-box}}

\vspace{0.5pc}

\State Compute the auxiliary vectors $\xi_{\text{au}}^k$, $\xi^{k} \in \R^n$:
\begin{align*}
&\left(\xi_{\text{au}}^k\right)_j = 
\begin{cases}
0, &\text{if } j \in \F; \\
-(x^{k}_j)^{-1} (r_{xz}^k)_{j}, &\text{if } j \in \I; \\
-(x^{k}_j)^{-1} (r_{xz}^k)_{j} + (w^{k}_j)^{-1} (r_{ws}^k)_{j} -(w^{k}_j)^{-1} s^{k}_j (r_{u})_{j}, &\text{if } j \in \J,
\end{cases} \\
&\xi^{k} = r_p^k + A (Q + \Theta_k^{-1} + \rho_k I)^{-1} (r_d^k + \xi_{\text{au}}^k).
\end{align*}

\State Use PCG with preconditioner $P_k$ to solve $\Delta y^{k}$ from the normal equations:
\begin{equation} \label{eq:normal-eq-free-box}
[A (Q + \Theta_k^{-1} + \rho_k I_n)^{-1} A^T + \delta_k I_m] \Delta y^{k} = \xi^{k}.
\end{equation}

\State Compute $\Delta x^{k}$, $\Delta z^{k}$, $\Delta w^{k}$, $\Delta s^{k}$ from closed-form formulae:
\begin{align}
&\Delta x^{k} = (Q + \Theta_k^{-1} + \rho_k I)^{-1} (A^T \Delta y^{k} - r_d^k - \xi_{\text{au}}^k); \label{eq:delta-x-free-box} \\
&\Delta w^{k}_{\J} = (r_u^k)_{\J} - \Delta x^{k}_{\J}, \quad \Delta w^{k}_{\I\F} = 0; \label{eq:delta-w-free-box} \\
&\Delta z^{k}_{\I\J} = \diag(x^{k}_{\I\J})^{-1} (r_{xz}^k)_{\I\J} - \diag(z^{k}_{\I\J}) \Delta x^{k}_{\I\J}, \quad \Delta z^{k}_{\F} = 0; \label{eq:delta-z-free-box} \\
&\Delta s^{k}_{\J} = \diag(w^{k}_{\J})^{-1} (r_{ws}^k)_{\J} - \diag(s^{k}_{\J}) \Delta w^{k}_{\J}, \quad \Delta s^{k}_{\I\F} = 0. \label{eq:delta-s-free-box}
\end{align}
\end{algorithmic}
\end{algorithm}

\subsubsection{Initial point}
Our choice of initial point is inspired by 
Mehrotra's initial point \cite{Mehrotra_1992_implementation} and that in \cite{PougkakiotisGondzio_2021_IPPMM}.
A candidate point is first constructed by ignoring the non-negative constraints and then is modified to ensure the positivity of $x_{\I\J}$, $z_{\I\J}$, $w_{\J}$, $s_{\J}$.
See details in \cref{sec:initial-pt} of the supplementary materials.

\subsubsection{Mehrotra's predictor-corrector method}
\label{sec:predictor-corrector}

\begin{algorithm}[!h]
    \caption{Mehrotra's predictor-corrector method}
    \label{alg:pred-corr-scheme}
    \begin{algorithmic}[1]
    \Require{current iterate $x^{k}$, $y^{k}$, $z^{k}$, $w^{k}$, $s^{k}$; PMM parameters $\lambda^k$, $\zeta^k$, $\rho_k$, $\delta_k$; inverse preconditioner $P_{k}^{-1}$}
    \Ensure{new iterate $x^{k+1}$, $y^{k+1}$, $z^{k+1}$, $w^{k+1}$, $s^{k+1}$ and new duality measure $\mu_{k+1}$}
    
    \vspace{0.5pc}
    
    \State Compute predictor step $(\Delta_p x^{k}, \Delta_p y^{k}, \Delta_p z^{k}, \Delta_p w^{k}, \Delta_p s^{k})$ by \Cref{alg:solve-Newton} with:
    \begin{equation} \label{eq:predictor-RHS}
    \begin{aligned}
    &r_d^k = c + Qx^{k} - A^T y^{k} - z^{k} + s^{k} + \rho_k (x^{k} - \zeta^{k}), \\
    &r_p^k = b - A x^{k} - \delta_k (y^{k} - \lambda^{k}), \\
    &(r_u^k)_{\J} = u_{\J} - x^{k}_{\J} - w^{k}_{\J}, \quad (r_u^k)_{\I\F} = 0, \\
    &(r_{xz}^k)_{\I\J} = - \diag(x^{k}_{\I\J}) \diag(z^{k}_{\I\J}) \mathbbm{1}_{|\I\J|}, \quad (r_{xz}^k)_{\F} = 0, \\
    &(r_{ws}^k)_{\J} = - \diag(w^{k}_{\J}) \diag(s^{k}_{\J}) \mathbbm{1}_{|\J|}, \quad (r_{ws}^k)_{\I\F} = 0,.
    \end{aligned} 
    \end{equation}
    
    \State Find stepsizes $\bar{\alpha}_p$ and $\bar{\alpha}_d$ for predictor step using \eqref{eq:stepsize-p}--\eqref{eq:stepsize-d}.
    
    \State Compute the hypothetical measure for predictor step:
    \begin{equation} \label{eq:mu-aff}
    \mu_{\text{aff}} = \tfrac{(x^{k}_{\I\J} + \bar{\alpha}_p \Delta_p x^{k}_{\I\J} )^T (z^{k}_{\I\J} + \bar{\alpha}_d \Delta_p z^{k}_{\I\J}) + (w^{k}_{\J} + \bar{\alpha}_p \Delta_p w^{k}_{\J} )^T (s^{k}_{\J} + \bar{\alpha}_d \Delta_p s^{k}_{\J})}{|\I\J|+|\J|}.
    \end{equation}

    \State Compute the duality measure $\mu_k$ and $\tilde{\mu}^{k}$:
    \begin{equation*}
    \mu_k = \tfrac{\left(x^{k}_{\I\J}\right)^T z^{k}_{\I\J} + \left(w^{k}_{\J}\right)^T s^{k}_{\J}}{|\I\J|+|\J|}, \quad \tilde{\mu}^{k} \coloneqq \tfrac{\mu_{\text{aff}}^3}{\mu_k^2}.
    \end{equation*}
    
    \State Compute corrector step $(\Delta_c x^{k}, \Delta_c y^{k}, \Delta_c z^{k}, \Delta_c w^{k}, \Delta_c s^{k})$ by \Cref{alg:solve-Newton} with:
    \begin{equation} \label{eq:corrector-RHS}
    \begin{alignedat}{2}
    &r_d^k = 0, \quad 
    r_p^k = 0, \quad
    (r_u^k)_{\J} = 0, \\
    &(r_{xz}^k)_{\I\J} = \tilde{\mu}^{k} \mathbbm{1}_{|\I\J|} - \diag(\Delta_p x^{k}_{\I\J}) \diag(\Delta_p z^{k}_{\I\J}) \mathbbm{1}_{|\I\J|}, \quad &&(r_{xz}^k)_{\F} = 0, \\
    &(r_{ws}^k)_{\J} = \tilde{\mu}^{k} \mathbbm{1}_{|\J|} - \diag(\Delta_p w^{k}_{\J}) \diag(\Delta_p s^{k}_{\J}) \mathbbm{1}_{|\J|}, \quad &&(r_{ws}^k)_{\I\F} = 0.
    \end{alignedat}  
    \end{equation}

    \State Compute the final search direction:
    \begin{align*}
    &(\Delta x^{k}, \Delta w^{k}) = (\Delta_p x^{k} + \Delta_c x^{k}, \Delta_p w^{k} + \Delta_c w^{k}) \\
    &(\Delta y^{k}, \Delta z^{k}, \Delta s^{k}) = (\Delta_p y^{k} + \Delta_c y^{k}, \Delta_p z^{k} + \Delta_c z^{k}, \Delta_p s^{k} + \Delta_c s^{k})
    \end{align*}

    \State Find stepsizes $\alpha_{p}$ and $\alpha_{d}$ for final search direction using \eqref{eq:stepsize-p}--\eqref{eq:stepsize-d}.
    
    \State Update the iterates:
    \begin{align*}
    &(x^{k+1}, w^{k+1}) = (x^{k}, w^{k}) + \alpha_{p} (\Delta x^{k}, \Delta w^{k}) \\
    &(y^{k+1}, z^{k+1}, s^{k+1}) = (y^{k}, z^{k}, s^{k}) + \alpha_{d} (\Delta y^{k}, \Delta z^{k}, \Delta s^{k})
    \end{align*}
    
    \State Update the duality measure: $\mu_{k+1} = \frac{\left(x^{k+1}_{\I\J}\right)^T z^{k+1}_{\I\J} + \left(w^{k+1}_{\J}\right)^T s^{k+1}_{\J}}{|\I\J|+|\J|}$
    
    \end{algorithmic}
\end{algorithm}

Our Nys-IP-PMM method determines the search direction by Mehrotra's pre\-dic\-tor-corrector method \cite{Mehrotra_1992_implementation}, which is considered the industry-standard approach in IPMs.
The method automatically selects the centering parameter $\sigma_k$ by solving two Newton systems:
the first chooses $\sigma_k$, and the second chooses the search direction. See \Cref{alg:pred-corr-scheme}.
Fortunately, the two systems differ only in their RHS vectors,
so the preconditioner can be reused.

The first direction, called \textit{predictor step} and also known as \textit{affine-scaling direction}, aims at reducing the duality measure $\mu$ as much as possible and ignores centrality by setting $\sigma_k = 0$. Hence, the RHS vectors $r_p$, $r_d$, and $r_u$ are taken to be the negative residuals of primal, dual, and upper-bounded constraints respectively; while $r_{xz}$ and $r_{ws}$ are negative residuals of complementary slackness (see \eqref{eq:predictor-RHS}). 

Given the predictor step and associated stepsizes, Mehrotra suggests the centering parameter $\sigma_k = (\mu_{\text{aff}} / \mu_k)^3$, where $\mu_{\text{aff}}$ is the hypothetical duality measure resulting from making the predictor step (see \eqref{eq:mu-aff}).
The second direction, called the \textit{corrector step}, consists of centering component and correcting component that attempts to compensate for error in the linear approximation to complementary slackness.
See \eqref{eq:corrector-RHS} for the associated RHS vectors.
The final search direction is the sum of predictor step and corrector step.

\section{Numerical experiments} \label{sec:numerical-exp}
We demonstrate the effectiveness of Nys-IP-PMM numerically in this section.
We compare Nys-IP-PMM with other matrix-free IP-PMMs: 
IP-PMM using CG (CG-IP-PMM)
and IP-PMM using PCG with the partial Cholesky preconditioner 
(Chol-IP-PMM).
In \cref{sec:portfolio}, we show that matrix-free algorithms can handle a large-scale portfolio optimization problem that is too large for IPMs using a direct inner linear system solver. 
Among the matrix-free methods, Nys-IP-PMM is fastest.
\Cref{sec:SVM} compares these matrix-free methods on linear support vector machine (SVM) problems \cite{fine2001efficient, woodsend2011exploiting, gondzio2009exploiting,  woodsend2009hybrid}. 
Nys-IP-PMM beats Chol-IP-PMM substantially. It also improves on CG-IP-PMM when the constraint matrix $A$ is dense, and is never much slower.

All experiments were run on a server with 2x 64-core AMD EPYC 7763 @ 2.45GHz, 1 TB RAM. We only use 16 cores for any given experiment since the calls to BLAS are limited to 16 threads. 
The implementation uses Julia v1.9.1 \cite{bezanson2017julia}. Code is publicly available at \url{https://github.com/udellgroup/Nys-IP-PMM}. Input matrices $Q$ and $A$ are passed as linear operator objects using \texttt{LinearOperators.jl} \cite{dominique_orban_2024_10713080}, while \texttt{Krylov.jl} \cite{montoison-orban-2023} is selected, because of its  efficiency, as the implementation for the con\-ju\-gate gradient (CG) method.
When entrywise access to $A$ is available, our implementation for the partial Cholesky preconditioner 
provides an option to compute the diagonal of $N_k$ more efficiently (see footnote \ref{fn:entrywise}). 
Our experiments use this option for Chol-IP-PMM.
Hence the performance of Chol-IP-PMM is \emph{better} than it would be in the matrix-free setting,
understating the advantage of Nys-IP-PMM.

\subsection{Large-scale portfolio optimization problem} \label{sec:portfolio}

This experiment show\-cases the use of Nys-IP-PMM for large-scale separable QPs. We construct a synthetic portfolio optimization problem, which aims at determining the asset allocation to maximize risk-adjusted returns while constraining correlation with market indexes or competing portfolios:
\begin{equation} \label{eq:portfolio}
\begin{array}{cl}
   \minf  & \displaystyle - r^T x + \gamma x^T \Sigma x \\
   \st  & M x \leq u, ~ \mathbbm{1}_n^T x = 1, ~ x \geq 0,
\end{array}
\end{equation}
where variable $x \in \mathbb{R}^n$ represents the portfolio, $r \in \mathbb{R}^n$ denotes the vector of expected returns, $\gamma > 0$ denotes the risk aversion parameter, $\Sigma \in \mathbb{S}_n^{+}(\mathbb{R})$ represents the risk model covariance matrix, each row of $M \in \mathbb{R}^{d \times n}$ represents another portfolio, and $u \in \mathbb{R}^{d}$ upper bounds of the correlations.
We assume a factor model for the covariance matrix $\Sigma = F F^T + D$, where $F \in \R^{n \times s}$ is the factor loading matrix and $D \in \R^{n \times n}$ is a diagonal matrix representing asset-specific risk, and solve the equivalent reformulation of \eqref{eq:portfolio}. See \Cref{sec:factor-model} for details.

In our experiment, a true covariance matrix $\Sigma^\natural$ is generated with rapid spectral decay followed by slow decay; the rows of $M$ are normally distributed; the expected returns $r$ are sampled from standard normal; correlation upper bounds $u$ are sampled from a uniform distribution, and $\gamma$ is set to $1$.
To obtain the risk model $\Sigma$, we use the randomized rank-$s$ Nystr\"om approximation on $\Sigma^\natural$ to compute the factors $F$ and define $D$ by $\diag(D) = \diag(\Sigma - F F^T)$. 
The demonstrated problem instance has dimensions $n = \num{80000}$, $d = \num{50000}$, and $s = \num{100}$. 
The normal equations to solve at each iteration have size $d+s+1=\num{50101}$.
The tolerance for relative primal-dual infeasibility and optimality gap $\mu$ is set as $\epsilon = 10^{-8}$. 
\Cref{fig:portfolio} shows history of these three convergence measurements versus wallclock time.  
Nys-IP-PMM with sketchsize $\ell = 20$ converges $2$x faster than CG-IP-PMM and more than $3$x faster than Chol-IP-PMM.

\begin{figure}
    \centering
    \includegraphics[width=\linewidth]{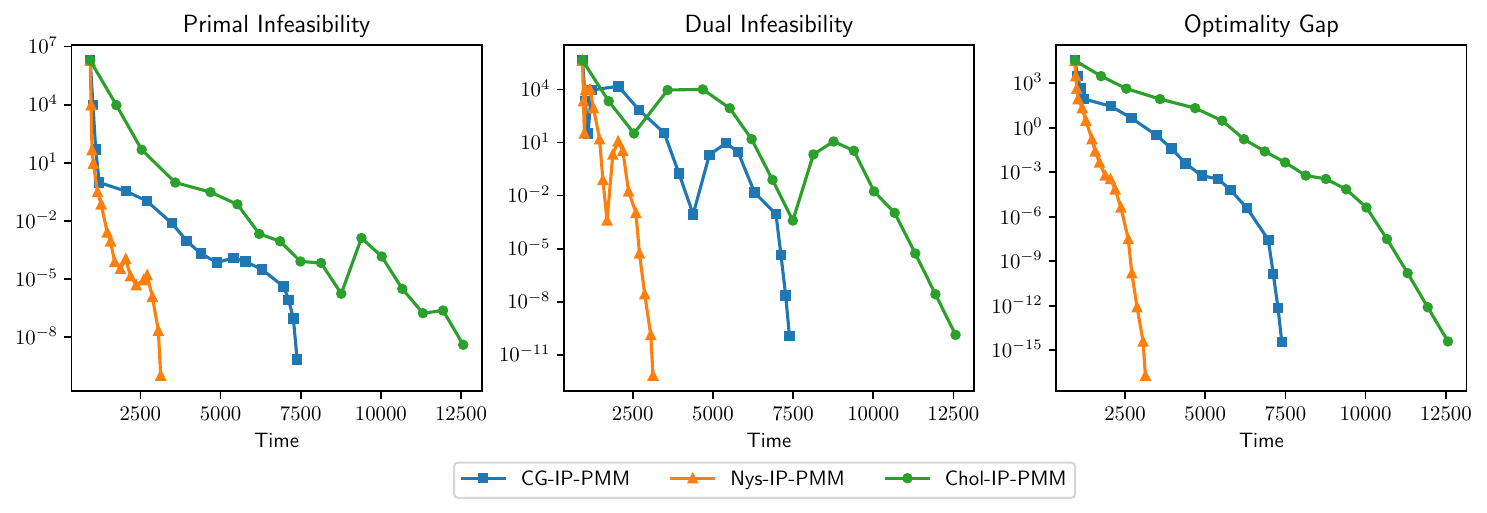}
    \caption{Relative primal/dual infeasibility and optimality gap versus cumulative time for portfolio optimization problem with $n = \num{80000}$, $d = \num{50000}$, and $s = 100$. 
    }
    \label{fig:portfolio}
\end{figure}

\subsection{Support vector machine (SVM) problem} \label{sec:SVM}
The linear support vector machine (SVM) problem solves a binary classification task on $n$ samples with $d$ features \cite[Chap.~12]{deisenroth2020mathematics}. 
It is well-known that dual linear SVM with $\ell_1$-regularization can be formulated as a convex QP \cite{fine2001efficient, woodsend2011exploiting, gondzio2009exploiting, woodsend2009hybrid}. See the formulation we used in \cref{sec:SVM-formulation}.
In this example, the constraint matrix $A$ contains the feature matrix as a block and thus can be very dense.  
We use the medium-to-large sized real datasets from UCI \cite{frank2010uci} and LIBSVM \cite{chang2011libsvm} listed in Table \ref{tab:datasets}.
\begin{table}[h]
\centering
\caption{SVM Datasets information.}
\label{tab:datasets}
\resizebox{\linewidth}{!}{
\begingroup
\begin{tabular}{@{}cccccccc@{}}
\toprule
Datasets & Features $d$ & Instances $n$ & nnz \% & Datasets & Features $d$ & Instances $n$ & nnz \% \\ \cmidrule(r){1-4} \cmidrule(l){5-8}
SensIT & \num{100} & \num{98528} & 100.0 & Arcene & \num{10000} & 100 & 54.1 \\
CIFAR10 & \num{3072} & \num{60000} & 99.7 & Dexter & \num{20000} & 300 & 0.5 \\
STL-10 & \num{27648} & \num{13000} & 96.3 & Sector & \num{55197} & \num{9619} & 0.3 \\
RNASeq & \num{20531} & \num{801} & 85.8 \\
\bottomrule
\end{tabular}
\endgroup
}
\end{table}

\subsubsection{SVM results}

\begin{table}[h]
\centering
\caption{Nys-IP-PMM vs other preconditioners on SVM problem.}
\label{tab:SVM-results}
\resizebox{\linewidth}{!}{
\begingroup
\begin{tabular}{@{}ccccccccccc@{}}
\toprule
\multicolumn{1}{l}{\multirow{3}{*}{Datasets}} & \multicolumn{3}{c}{\textbf{Nys-IP-PMM}} & \multicolumn{3}{c}{\textbf{CG-IP-PMM}} & \multicolumn{3}{c}{\textbf{Chol-IP-PMM}} & \\ 

 & \begin{tabular}[c]{@{}c@{}}IP-PMM\\ Outer Iter.\end{tabular} & \begin{tabular}[c]{@{}c@{}}Sum of\\ Inner Iter.\end{tabular} & \begin{tabular}[c]{@{}c@{}}Time\\ (sec.)\end{tabular}  & \begin{tabular}[c]{@{}c@{}}IP-PMM\\ Outer Iter.\end{tabular} & \begin{tabular}[c]{@{}c@{}}Sum of\\ Inner Iter.\end{tabular} & \begin{tabular}[c]{@{}c@{}}Time\\ (sec.)
 \end{tabular} & \begin{tabular}[c]{@{}c@{}}IP-PMM\\ Outer Iter.\end{tabular} & \begin{tabular}[c]{@{}c@{}}Sum of\\ Inner Iter.\end{tabular} & \begin{tabular}[c]{@{}c@{}}Time\\ (sec.)
 \end{tabular} & \begin{tabular}[c]{@{}c@{}}Rank\\ $\ell_k \equiv \ell$
 \end{tabular} \\ 
 \cmidrule(lr){2-4} \cmidrule(lr){5-7} \cmidrule(lr){8-10} \cmidrule(ll){11-11}

CIFAR10 &  12   & \num{18692}  &  $\bf{1103.26}$      & $\ast$   & $\ast$ & $\ast$ & $\ast$   & $\ast$ & $\ast$ & 200 \\
RNASeq  &  14   &  \num{4793}  &   {\bf 8.40}     &  14  &  \num{24689}  &  19.59  & 14  &  \num{20488} &  \num{769.95} & 200 \\
STL-10  &  12   & \num{119760}  & $\bf{7691.41}$       &  12  & \num{492925}  &  \num{30016.8}  & 12  & \num{491503} &  \num{103681.65} & 800  \\
SensIT  &  15   &  \num{1621}  &    \textbf{5.69}    &  15  &   \num{4569}  &  14.12  & 15  & \num{1164} &  11.05 & 50 \\
Arcene  &   5   & \num{386} & $\bf{0.233}$  &   5  &    \num{649}  &  \num{0.278} &  5  & \num{6194} &  18.15 & 20 \\
Dexter  &   4   &  \num{344}  &  \num{0.202}   &   4  &    \num{333}  &  $\bf{0.122}$ &  4  & \num{3754} &  20.81 & 10 \\
Sector  &  16   &  \num{3570}  & \textbf{16.33}     &  16  &   \num{4493}  &  16.99  & 16  & \num{6940} &  \num{681.96} & 20 \\
\bottomrule
\end{tabular}
\endgroup
}

\end{table} 

This section evaluates the performance of Nys-IP-PMM on the SVM problem \eqref{eq:dual-SVM} using datasets listed in \Cref{tab:datasets}.
We solve SVM problems with Nys-IP-PMM, CG-IP-PMM, and Chol-IP-PMM.
All three methods are matrix-free, meaning they do not rely on direct access to the entries of $N_k + \delta_k I$.
The results, presented in \Cref{tab:SVM-results}, highlight the effectiveness Nys-IP-PMM. 
In contrast, Chol-IP-PMM does not improve runtime as much,
and in some cases, can even increase the required number of inner iterations (e.g., for Arcene, Dexter, and Sector). 
Its construction time can also be significant.
Overall, Chol-IP-PMM is often much slower than CG-IP-PMM.

The randomized Nystr\"om preconditioner accelerates IP-PMM more effectively on dense datasets such as SensIT, CIFAR10, STL-10, and RNASeq. As indicated in \Cref{tab:comparison}, the construction time of the Nystr\"om preconditioner differs significantly from that of the partial Cholesky preconditioner when $A$ is dense.
Datasets Arcene and Dexter are relatively easy, requiring few inner iterations even without a preconditioner.

In summary, the randomized Nystr\"om preconditioner is a competitive choice for SVM problems. It effectively preconditions ill-conditioned instances without much slowdown for well-conditioned instances.

\subsubsection{Condition numbers at different IP-PMM stages} \label{sec:exp-cond}

\begin{figure} \label{fig:cond_rank}
    \centering
    \includegraphics[width=\linewidth]{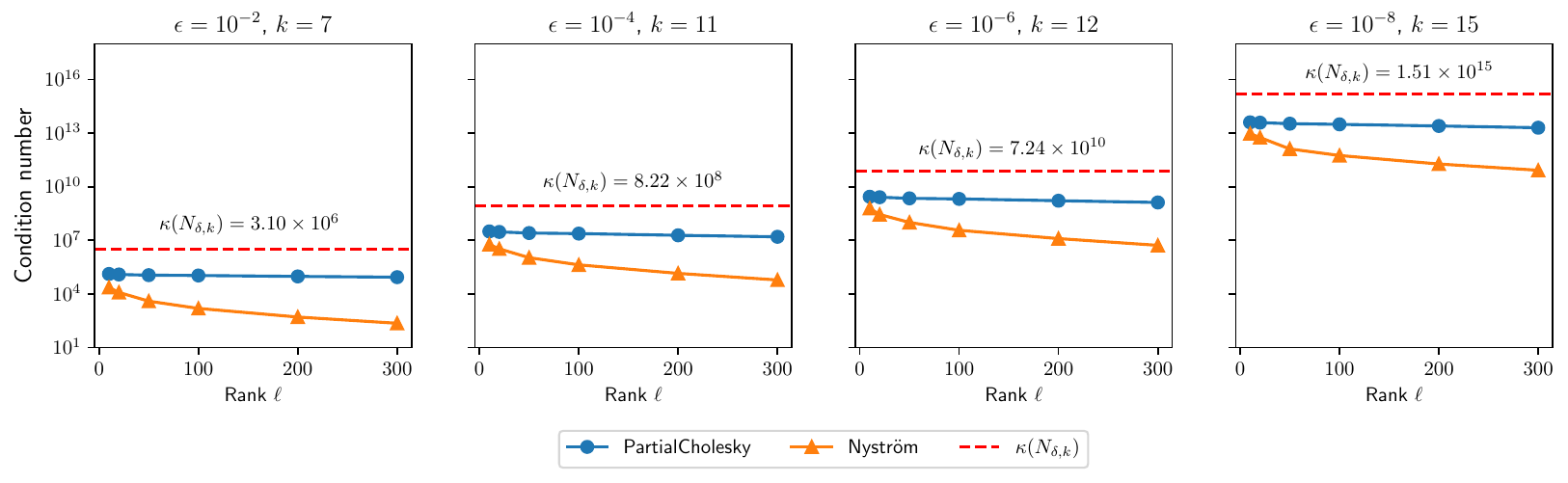}
    \caption{The condition numbers before and after preconditioning.
    The subplots represent distinct stages of IP-PMM convergence. The red dashed line shows the unpreconditioned condition number $\kappa(N_{\delta, k})$.
    Blue circles denote the condition number after partial Cholesky preconditioning, while orange triangles represent the condition number after Nystr\"om preconditioning.
    }
\end{figure}

We explore the impact of the preconditioner across various stages of IP-PMM by examining the condition number.
We record the regularized normal equations \eqref{eq:normal-eq} in the iteration of which IP-PMM attains primal-dual infeasibility and duality measure less than different tolerance levels $\epsilon \in \{10^{-2}, 10^{-4}, 10^{-6}, 10^{-8}\}$.
Then we compute the condition number of $N_{\delta} = N_k + \delta_k I$ before and after applying different preconditioners.
For efficiency, this experiment 
concerns a smaller SVM problem
formed from $1000$ samples from CIFAR10;
the normal equations have size $3073 \times 3073$.

\Cref{fig:cond_rank} demonstrates the (preconditioned) condition numbers with different choices $\ell \in \{10, 20, 50, 100, 200, 300\}$.
The red dashed horizontal line denotes the condition number $\kappa(N_{\delta})$ before preconditioning.
The blue circles denote the condition numbers after partial Cholesky preconditioning,
while the orange triangles denote those after Nystr\"om preconditioning.
As expected, the normal equation matrix $N_{\delta}$ becomes more and more ill-conditioned as IP-PMM converges. 
The preconditioners have similar effects at each stage of convergence.
Both preconditioners reduce the condition number by 1--2 orders of magnitude.
As the rank $\ell$ increases, the Nystr\"om preconditioner improves the condition number by another 1--2 orders of magnitude,
while larger ranks barely improve the effectiveness 
of partial Cholesky preconditioner.

\subsubsection{Rank $\ell$ versus total/construction/PCG Time}

We illustrate trade-offs between the rank $\ell$ and the wallclock time of Nys-IP-PMM and Chol-IP-PMM. 
\Cref{fig:rank_time} shows the averaged wallclock time over $4$ independent runs for SVM problem on the RNASeq dataset, where the normal equations have dimension $\num{20532} \times \num{20532}$.

The left subplot of \Cref{fig:rank_time} shows that the Nystr\"om preconditioner reduces overall runtime dramatically compared to the partial Cholesky preconditioner. 
The construction time for the partial Cholesky preconditioner increases with increased rank (as expected), 
while the PCG runtime does not decrease and even increases.
We see that partial Cholesky is not an effective pre\-con\-di\-tioner for this problem.

Zooming in, the right subplot of \Cref{fig:rank_time} shows the trade-off between the rank parameter $\ell$ and the total time for Nys-IP-PMM. 
The PCG runtime dominates for smaller ranks, while the time to construct the preconditioner dominates for higher ranks.
The overall time exhibits a U-shape: 
excessively small or large ranks lead to slower computational times.
Further improvements to Nys-IP-PMM might be achieved by tuning the rank for each problem and by reusing the preconditioner between iterations.

\begin{figure}
    \centering
    \includegraphics[width=\linewidth]{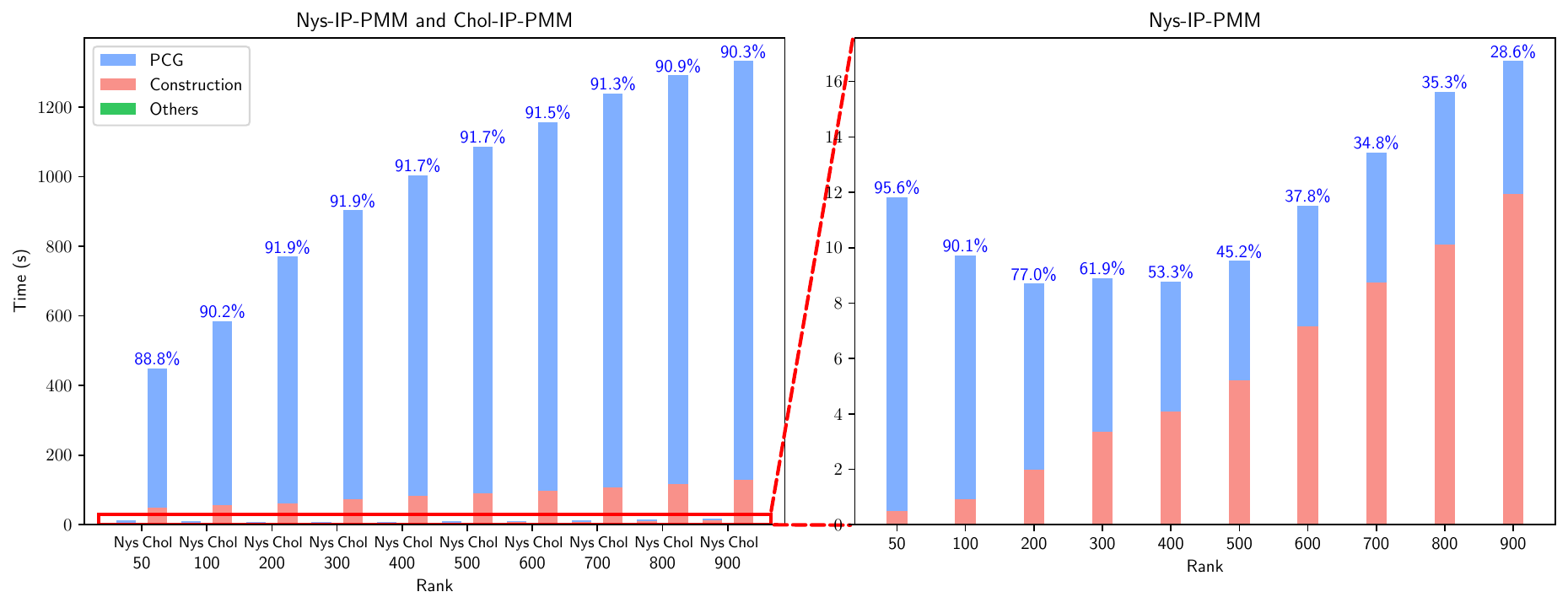}
    \caption{Runtime of Nys-IP-PMM and Chol-IP-PMM with varying rank $\ell$ 
    on RNASeq dataset. 
    Left plot compares the two methods; right plot zooms in on the bar chart for Nys-IP-PMM.     
    Bar height show total runtime, which is broken into 1) PCG runtime, 2) construction time for preconditioner, and 3) other computation (which is negligible). Blue number above bar gives percentage PCG time. 
    }
    \label{fig:rank_time}
\end{figure}

\section{Concluding remarks}
We present a new algorithm, Nys-IP-PMM, for large-scale separable QP by solving the normal equations in IP-PMM using randomized Nystr\"om preconditioned conjugate gradient (PCG). 
Nys-IP-PMM leverages matrix-free precondition\-ing for computational efficiency. 
Theoretical analysis and numerical experiments demonstrate superiority of Nys-IP-PMM over existing matrix-free IPM-based me\-thods. 
Our open-source implementation provides a flexible basis for further exploration of matrix-free regularized IPMs, 
including new preconditioners, extensions to hardware accelerators such as GPUs, and IPMs for non-separable QP 
(which require using the augmented system)
or other classes of optimization problems.
By combining theoretical insights with empirical validation, 
our work advances the state-of-the-art in large-scale optimization, with potential applications in finance, engineering, and machine learning.

\bibliographystyle{siamplain}
\bibliography{references}

\appendix
\section{Proofs}

\subsection{Lemmas for \Cref{thm:inexact-IPPMM-QP}} \label{sec:lemmas}

This section provides the lemmas su\-pport\-ing \Cref{thm:inexact-IPPMM-QP}, which is built upon \cite[Lemma 1--4]{PougkakiotisGondzio_2021_IPPMM}, \Cref{lem:bdd-step} and \Cref{lem:IPPMM-stepsize}. 
Note that \cite[Lemma 1--4]{PougkakiotisGondzio_2021_IPPMM} were originally devised for exact IP-PMM, yet their proofs still hold for inexact IP-PMM, provided that the iterates $(x^{k}, y^{k}, z^{k})$ 
lie within the neighborhood $\mathcal{N}_{\mu_k}\left(\zeta^{k}, \lambda^{k}\right)$ defined in \eqref{eq:neighborhood}. 
This condition is established in \Cref{lem:bdd-step,lem:IPPMM-stepsize}, adapted from \cite[Lemma 5--6]{PougkakiotisGondzio_2021_IPPMM} respectively. 
The modifications are inspired by the techniques used for inexact IP-PMM on linearly constrained SDP \cite{2022IPPMM-SDP}.

\begin{lemma}
\label{lem:bdd-step}
Assume \Cref{assump:bdd-opt-sol,assump:constraint-mat-full-rank} and suppose that the search di\-rec\-tion $\left(\Delta x^{k}, \Delta y^{k}, \Delta z^{k}\right)$ satisfies inexact Newton system \eqref{eq:inexact-Newton-RHS} at an arbitrary iteration $k \geq 0$ of inexact IP-PMM.
Let $D_k^2:=\diag(x^{k}) \diag(z^{k})^{-1}$. Then
\begin{equation*}
\left\| D_k^{-1} \Delta x^{k}\right\|_2=O(n^2 \mu_k^{\frac{1}{2}}),
~\left\|D_k \Delta z^{k}\right\|_2=O(n^2 \mu_k^{\frac{1}{2}}), 
~\left\|\left(\Delta x^{k}, \Delta y^{k}, \Delta z^{k}\right)\right\|_2=O(n^3).
\end{equation*}
\end{lemma}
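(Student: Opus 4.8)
The plan is to adapt the infeasible interior-point step-bounding argument of \cite[Lemma~5]{PougkakiotisGondzio_2021_IPPMM} to the inexact setting, in the spirit of \cite{2022IPPMM-SDP}. Throughout I invoke the inductive hypothesis (maintained jointly with \Cref{lem:IPPMM-stepsize}) that the current iterate lies in $\mathcal{N}_{\mu_k}(\zeta^{k}, \lambda^{k})$; combined with \Cref{assump:bdd-opt-sol,assump:constraint-mat-full-rank} and \cite[Lemma~1--4]{PougkakiotisGondzio_2021_IPPMM}, this supplies the polynomial iterate bounds $\|x^{k}\|_2, \|y^{k}\|_2, \|z^{k}\|_2 = O(n)$. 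Write $\phi_d^{k}, \phi_p^{k}, \phi_\mu^{k}$ for the three block components of the right-hand side of \eqref{eq:inexact-Newton-RHS}, symmetrize the system with $D_k$, and set $u := D_k^{-1}\Delta x^{k}$ and $v := D_k\Delta z^{k}$, so that the first two targets become $\|u\|_2, \|v\|_2 = O(n^2 \mu_k^{1/2})$.

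The argument rests on two identities. Scaling the third block by $(\diag(x^{k})\diag(z^{k}))^{-1/2}$ and using $\inerror_\mu^{k} = 0$ gives $u + v = w$ with $w := (\diag(x^{k})\diag(z^{k}))^{-1/2}(\sigma_k\mu_k\mathbbm{1}_n - \diag(x^{k})\diag(z^{k})\mathbbm{1}_n)$, and then $\sum_i x_i^{k} z_i^{k} = n\mu_k$ together with $x_i^{k} z_i^{k} \geq \gamma_\mu\mu_k$ yields $\|w\|_2 = O(\sqrt{n\mu_k})$. Pairing the first block of \eqref{eq:inexact-Newton-RHS} with $\Delta x^{k}$, the second with $\Delta y^{k}$, and cancelling the common term $(\Delta x^{k})^T A^T \Delta y^{k}$ produces
\[
u^T v = \|(Q+\mu_k I_n)^{1/2}\Delta x^{k}\|_2^2 + \mu_k\|\Delta y^{k}\|_2^2 + (\Delta x^{k})^T\phi_d^{k} - (\Delta y^{k})^T\phi_p^{k}.
\]
Because $Q \succeq 0$, I would discard the first nonnegative term and complete the square in $\Delta y^{k}$ to eliminate its dependence, $\mu_k\|\Delta y^{k}\|_2^2 - (\Delta y^{k})^T\phi_p^{k} \geq -\|\phi_p^{k}\|_2^2/(4\mu_k)$, obtaining $u^T v \geq -\|\Delta x^{k}\|_2\|\phi_d^{k}\|_2 - \|\phi_p^{k}\|_2^2/(4\mu_k)$.

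Substituting this into $\|u\|_2^2 + \|v\|_2^2 = \|w\|_2^2 - 2u^T v$ and using $\|\Delta x^{k}\|_2 \leq \|D_k\|_2\|u\|_2$ gives the scalar quadratic inequality
\[
\|u\|_2^2 \leq \|w\|_2^2 + \tfrac{1}{2}\mu_k^{-1}\|\phi_p^{k}\|_2^2 + 2\|D_k\|_2\|\phi_d^{k}\|_2\|u\|_2,
\]
which solves to $\|u\|_2 = O(\|D_k\|_2\|\phi_d^{k}\|_2 + \|w\|_2 + \mu_k^{-1/2}\|\phi_p^{k}\|_2)$. The estimates $\|D_k\|_2 \leq \|x^{k}\|_\infty(\gamma_\mu\mu_k)^{-1/2} = O(n\mu_k^{-1/2})$ and $\|\phi_d^{k}\|_2, \|\phi_p^{k}\|_2 = O(n\mu_k)$ (each dominated by the proximal terms $\sigma_k\mu_k(x^{k}-\zeta^{k})$ and $\sigma_k\mu_k(y^{k}-\lambda^{k})$, while the scaled infeasibilities $\tfrac{\mu_k}{\mu_0}(\bar b+\tilde b^{k})$, $\tfrac{\mu_k}{\mu_0}(\bar c+\tilde c^{k})$ and the inexact residuals $\inerror_p^{k},\inerror_d^{k}$ are lower order by \Cref{assump:Newton-error-bound}) make the first term dominant, giving $\|u\|_2 = O(n^2\mu_k^{1/2})$; then $\|v\|_2 = \|w - u\|_2 = O(n^2\mu_k^{1/2})$. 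The unscaled bounds follow: $\|\Delta x^{k}\|_2 \leq \|D_k\|_2\|u\|_2 = O(n^3)$ and $\|\Delta z^{k}\|_2 \leq \|D_k^{-1}\|_2\|v\|_2 = O(n^3)$ (using $\|D_k\|_2, \|D_k^{-1}\|_2 = O(n\mu_k^{-1/2})$), and finally $\|\Delta y^{k}\|_2 \leq C_{\min}^{-1}\|A^T\Delta y^{k}\|_2 = C_{\min}^{-1}\|\phi_d^{k} + (Q+\mu_k I_n)\Delta x^{k} - \Delta z^{k}\|_2 = O(n^3)$, invoking the full row rank of $A$ from \Cref{assump:constraint-mat-full-rank} and the first block of \eqref{eq:inexact-Newton-RHS}.

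The main obstacle is bookkeeping rather than conceptual: since every constant must be certified independent of $n$ and $m$, the crux is to pin down the exact polynomial orders of $\|x^{k}\|_\infty,\|z^{k}\|_\infty,\|D_k\|_2,\|D_k^{-1}\|_2,\|\phi_p^{k}\|_2,\|\phi_d^{k}\|_2$ from neighborhood membership via \cite[Lemma~1--4]{PougkakiotisGondzio_2021_IPPMM}, and to verify that the inexact residuals contribute only $O(\mu_k)$ terms which, by \Cref{assump:Newton-error-bound}, sit strictly below the dominant $O(n\mu_k)$ proximal contributions and hence leave the final orders $n^2$ and $n^3$ unchanged. This verification is the precise point of departure from \cite{PougkakiotisGondzio_2021_IPPMM} and mirrors the inexact-SDP treatment of \cite{2022IPPMM-SDP}.
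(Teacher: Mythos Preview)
Your argument is correct, but it takes a genuinely different route from the paper's. The paper does not work directly with the raw right-hand sides $\phi_d^{k},\phi_p^{k}$; instead it follows \cite[Lemma~5]{PougkakiotisGondzio_2021_IPPMM} by invoking \cite[Lemmas~2--3]{PougkakiotisGondzio_2021_IPPMM} to produce auxiliary triples $(x_{r^{k}}^{\ast}, y_{r^{k}}^{\ast}, z_{r^{k}}^{\ast})$ and $(\tilde x,\tilde y,\tilde z)$, then redefines the perturbation vectors $\hat b,\hat c$ (their Eq.~(29)) with the extra terms $\inerror_d^{k}/\mu_k$ and $\inerror_p^{k}/\mu_k$ folded in. With these modified $\hat b,\hat c$, the shifted direction $(\bar x,\bar y,\bar z)$ of \cite[Eq.~(31)]{PougkakiotisGondzio_2021_IPPMM} satisfies the \emph{homogeneous} system $A\bar x+\mu_k\bar y=0$, $-Q\bar x+A^T\bar y+\bar z-\mu_k\bar x=0$, and the remainder of the exact proof applies verbatim. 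So the paper's strategy is ``absorb the inexactness into the auxiliary data and reduce to the exact case,'' whereas yours is a self-contained energy estimate: pair the blocks to get an identity for $u^Tv$, complete the square in $\Delta y^{k}$, and solve a scalar quadratic in $\|u\|_2$. Your route is arguably more elementary---it never touches the auxiliary triples or \cite[Lemmas~2--3]{PougkakiotisGondzio_2021_IPPMM}---but it leans more heavily on the explicit neighborhood bookkeeping to certify $\|\phi_d^{k}\|_2,\|\phi_p^{k}\|_2=O(n\mu_k)$, which is exactly the step the paper's decomposition is designed to sidestep. The paper's approach buys modularity (the inexact case collapses to a one-line modification of the exact proof); yours buys independence from the machinery of \cite{PougkakiotisGondzio_2021_IPPMM}.
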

\begin{proof}
The proof follows that of \cite[Lemma 5]{PougkakiotisGondzio_2021_IPPMM} with a few modifications. 
For simplicity, we only sketch the proof with emphasis on modifications.

Following the beginning of the proof for \cite[Lemma 5]{PougkakiotisGondzio_2021_IPPMM}, 
we invoke \cite[Lemma 2]{PougkakiotisGondzio_2021_IPPMM} and \cite[Lemma 3]{PougkakiotisGondzio_2021_IPPMM} to obtain two triples $(x_{r^{k}}^{\ast}, y_{r^{k}}^{\ast}, z_{r^{k}}^{\ast})$ and $(\tilde{x}, \tilde{y}, \tilde{z})$ satisfying \cite[Eq. (22)]{PougkakiotisGondzio_2021_IPPMM} and \cite[Eq. (23)]{PougkakiotisGondzio_2021_IPPMM} respectively.
Using the centering parameter $\sigma_k$, instead of defining $\hat{c}$ and $\hat{b}$ as in \cite[Eq. (29)]{PougkakiotisGondzio_2021_IPPMM}, we define
\begin{align}
\hat{c} & :=-\left(\sigma_k \frac{\bar{c}}{\mu_0}-\left(1-\sigma_k\right)\left(x^{k}-\zeta^{k}+\frac{\mu_k}{\mu_0}\left(\tilde{x}-x_{r^{k}}^*\right)\right) + \frac{1}{\mu_k} \inerror_{d}^{k} \right), \label{eq:pf-b-hat} \\
\hat{b} & :=-\left(\sigma_k \frac{\bar{b}}{\mu_0}+\left(1-\sigma_k\right)\left(y^{k}-\lambda^{k}+\frac{\mu_k}{\mu_0}\left(\tilde{y}-y_{r^{k}}^*\right)\right) + \frac{1}{\mu_k} \inerror_{p}^{k} \right), \label{eq:pf-c-hat}
\end{align}
where $\bar{c}, \bar{b}, \mu_0$ are defined in \eqref{eq:starting-pt}.
The additional terms $\inerror_d^{k} / \mu_k$ and $\inerror_p^{k} / \mu_k$ in \eqref{eq:pf-b-hat}--\eqref{eq:pf-c-hat} take into account the inexact errors in Newton system \eqref{eq:inexact-Newton-RHS}. 
With this new pair $(\hat{b}, \hat{c})$, define the triple $(\bar{x}, \bar{y}, \bar{z})$ as in \cite[Eq. (31)]{PougkakiotisGondzio_2021_IPPMM}.
Using \cite[Eq. (22)--(23), (31)]{PougkakiotisGondzio_2021_IPPMM} and inexact Newton system \eqref{eq:inexact-Newton-RHS}, it can be directly verified that
\begin{equation} \label{eq:pf-lem5-1}
A \bar{x} + \mu_k \bar{y} = 0, \quad -Q \bar{x} + A^T \bar{y} + \bar{z} - \mu_k \bar{x} = 0.
\end{equation}
Additional error terms are absorbed into the inexactness of the Newton system \eqref{eq:inexact-Newton-RHS}, yielding \eqref{eq:pf-lem5-1}. 
Given \eqref{eq:pf-lem5-1}, the rest of the proof 
follows the proof of \cite[Lemma 5]{PougkakiotisGondzio_2021_IPPMM}.
\end{proof}

\Cref{lem:IPPMM-stepsize} serves as the keystone for \Cref{thm:inexact-IPPMM-QP}, establishing a connection between inexact and exact IP-PMM. The lemma demonstrates that a step in the \textit{inexact} search direction with a sufficiently small stepsize yields sufficient reduction in the duality measure (\Cref{lem:IPPMM-stepsize} \eqref{item:lem-stepsize-a}) while staying within neighborhood $\mathcal{N}_{\mu_k}\left(\zeta^{k}, \lambda^{k}\right)$ (\Cref{lem:IPPMM-stepsize} \eqref{item:lem-stepsize-c}). Additionally, it ensures that the stepsizes are uniformly bounded away from zero (for every iteration), on the order of $O(\frac{1}{n^4})$ (\Cref{lem:IPPMM-stepsize} \eqref{item:lem-stepsize-b}).

\begin{lemma}
\label{lem:IPPMM-stepsize}
Instate \Cref{assump:bdd-opt-sol,assump:constraint-mat-full-rank} and assume that the search di\-rec\-tion $(\Delta x^{k}, \Delta y^{k}, \Delta z^{k})$ satisfies the inexact Newton system \eqref{eq:inexact-Newton-RHS}, with inexact errors satisfying \Cref{assump:Newton-error-bound} for every iteration $k \geq 0$ of inexact IP-PMM. 
Then there exists a step-length $\bar{\alpha} \in(0,1)$ such that the following hold for all iterations $k \geq 0$:
\begin{enumerate}[(i)]
\item \label{item:lem-stepsize-a} 
for all $\alpha \in[0, \bar{\alpha}]$,
\begin{align}
&\left(x^{k}+\alpha \Delta x^{k}\right)^T\left(z^{k}+\alpha \Delta z^{k}\right) \geq \left(1-\alpha\left(1-\frac{\sigma_{\min}}{2} \right)\right) (x^{k})^T z^{k}; \label{eq:lem-stepsize-mu-1} \\
&\left(x^{k}_i+\alpha \Delta x^{k}_i\right)\left(z^{k}_i+\alpha \Delta z^{k}_i\right) \geq \frac{\gamma_\mu}{n}\left(x^{k}+\alpha \Delta x^{k}\right)^T\left(z^{k}+\alpha \Delta z^{k}\right), ~\forall i \in [n]; \label{eq:lem-stepsize-mu-2} \\
&\left(x^{k}+\alpha \Delta x^{k}\right)^T\left(z^{k}+\alpha \Delta z^{k}\right) \leq\left(1 - 0.01 \alpha \right) (x^{k})^T z^{k}; \label{eq:lem-stepsize-mu-3}
\end{align}

\item \label{item:lem-stepsize-b} 
$\bar{\alpha} \geq \bar{\kappa}/n^4$ with $\bar{\kappa}>0$ being a constant independent of $n$ and $m$;

\item \label{item:lem-stepsize-c} 
suppose $\left(x^{k}, y^{k}, z^{k}\right) \in \mathcal{N}_{\mu_k}\left(\zeta^{k}, \lambda^{k}\right)$ and, for any $\alpha \in(0, \bar{\alpha}]$, let 
\begin{equation} \label{eq:k+1-iterate}
\left(x^{k+1}, y^{k+1}, z^{k+1}\right) =\left(x^{k}+\alpha \Delta x^{k}, y^{k}+\alpha \Delta y^{k}, z^{k}+\alpha \Delta z^{k}\right)
\end{equation}
and $\mu_{k+1} = (x^{k+1})^T z^{k+1} / n$.
Then 
\begin{equation} \label{eq:k+1-in-neighborhood}
\left(x^{k+1}, y^{k+1}, z^{k+1}\right) \in \mathcal{N}_{\mu_{k+1}}\left(\zeta^{k+1}, \lambda^{k+1}\right),
\end{equation}
where $\lambda^{k}$ and $\zeta^{k}$ are updated as in \Cref{alg:inexact-IPPMM}.
\end{enumerate}
\end{lemma}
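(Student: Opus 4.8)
The plan is to follow the structure of the exact step-length analysis in \cite[Lemma 6]{PougkakiotisGondzio_2021_IPPMM}, carrying the inexact residuals $\inerror_p^{k},\inerror_d^{k}$ through every estimate and absorbing them using \Cref{assump:Newton-error-bound}. First I would expand the complementarity products along the search direction, writing $\mu_k(\alpha):=(x^{k}+\alpha\Delta x^{k})^T(z^{k}+\alpha\Delta z^{k})/n$. Since $\inerror_\mu^{k}=0$, the third block of \eqref{eq:inexact-Newton-RHS} reads $z_i^{k}\Delta x_i^{k}+x_i^{k}\Delta z_i^{k}=-x_i^{k}z_i^{k}+\sigma_k\mu_k$ componentwise, so summation gives $(z^{k})^T\Delta x^{k}+(x^{k})^T\Delta z^{k}=-n(1-\sigma_k)\mu_k$ and hence
\begin{equation*}
\mu_k(\alpha)=\bigl(1-\alpha(1-\sigma_k)\bigr)\mu_k+\tfrac{\alpha^2}{n}(\Delta x^{k})^T\Delta z^{k}.
\end{equation*}
Writing the cross terms as $(\Delta x^{k})^T\Delta z^{k}=(D_k^{-1}\Delta x^{k})^T(D_k\Delta z^{k})$ and $\Delta x_i^{k}\Delta z_i^{k}=(D_k^{-1}\Delta x^{k})_i(D_k\Delta z^{k})_i$, \Cref{lem:bdd-step} bounds both the aggregate cross term and each componentwise cross term by $O(n^4\mu_k)$.

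Next I would establish the three scalar inequalities of part (i). For \eqref{eq:lem-stepsize-mu-1} and \eqref{eq:lem-stepsize-mu-3} I use $\sigma_k\in[\sigma_{\min},\sigma_{\max}]$ with $\sigma_{\max}\le 0.5$: the linear term gives the leading behaviour, and the quadratic remainder $\tfrac{\alpha^2}{n}(\Delta x^{k})^T\Delta z^{k}=O(\alpha^2 n^3\mu_k)$ is dominated once $\alpha\lesssim 1/n^3$. The centrality inequality \eqref{eq:lem-stepsize-mu-2} is the tight one: expanding the $i$-th product as $(1-\alpha)x_i^{k}z_i^{k}+\alpha\sigma_k\mu_k+\alpha^2\Delta x_i^{k}\Delta z_i^{k}$, using $x_i^{k}z_i^{k}\ge\gamma_\mu\mu_k$ from the current neighborhood membership, and subtracting $\gamma_\mu\mu_k(\alpha)$, the surviving positive term is $\alpha\sigma_k(1-\gamma_\mu)\mu_k$ while the error is $O(\alpha^2 n^4\mu_k)$ (the per-component cross term is not divided by $n$). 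Nonnegativity therefore forces $\alpha\lesssim 1/n^4$, and this is exactly where the fourth power of $n$ enters.

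For part (ii) I would collect the thresholds from the three inequalities together with those needed for part (iii); the binding one is the $O(1/n^4)$ constraint from \eqref{eq:lem-stepsize-mu-2}, so $\bar\alpha\ge\bar\kappa/n^4$ with $\bar\kappa$ depending only on $\gamma_\mu$, $\sigma_{\min}$, and the $O(1)$ constants from \Cref{lem:bdd-step} and \Cref{assump:bdd-opt-sol,assump:constraint-mat-full-rank}, hence independent of $n$ and $m$. For part (iii), neighborhood membership has three ingredients. Positivity of $(x^{k+1},z^{k+1})$ follows from \eqref{eq:lem-stepsize-mu-2}: the products stay $\ge\gamma_\mu\mu_k(\alpha)>0$ on $(0,\bar\alpha]$, so by continuity in $\alpha$ neither coordinate can cross zero from its positive starting value. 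The centrality bound $x_i^{k+1}z_i^{k+1}\ge\gamma_\mu\mu_{k+1}$ is precisely \eqref{eq:lem-stepsize-mu-2} with $\mu_{k+1}=\mu_k(\alpha)$.

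The main obstacle is the remaining ingredient: verifying that the updated iterate lies in $\tilde{\mathcal{C}}_{\mu_{k+1}}(\zeta^{k+1},\lambda^{k+1})$, i.e. that the scaled infeasibilities $\tilde b^{k+1},\tilde c^{k+1}$ still satisfy $\|(\tilde b^{k+1},\tilde c^{k+1})\|_2\le C_N$ and $\|(\tilde b^{k+1},\tilde c^{k+1})\|_{\mathcal{A}}\le\gamma_{\mathcal{A}}\rho$. Here I would track the primal and dual infeasibility across the step exactly as in \cite[Lemma 6]{PougkakiotisGondzio_2021_IPPMM}, except that the first two blocks of \eqref{eq:inexact-Newton-RHS} now inject the residuals $\inerror_d^{k},\inerror_p^{k}$ into the recursion for $(\tilde b^{k},\tilde c^{k})$. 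The point is that \Cref{assump:Newton-error-bound} bounds these residuals by $\tfrac{\sigma_{\min}}{4\mu_0}C_N\mu_k$ in $\|\cdot\|_2$ and $\tfrac{\sigma_{\min}}{4\mu_0}\gamma_{\mathcal{A}}\rho\mu_k$ in $\|\cdot\|_{\mathcal{A}}$, so the extra contribution accumulated over one step is a strict fraction of the radii $C_N$ and $\gamma_{\mathcal{A}}\rho$; the factor $1/4$ leaves exactly the slack needed. The $(\zeta,\lambda)$-update in \Cref{alg:inexact-IPPMM} keeps these estimates consistent whether or not they are refreshed, closing the induction. This infeasibility-propagation step is the crux, since it is the only place where inexactness genuinely alters the exact argument.
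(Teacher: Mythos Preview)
Your proposal is correct and follows essentially the same route as the paper's proof: both exploit $\inerror_\mu^{k}=0$ so that part (i) goes through verbatim from \cite[Lemma 6]{PougkakiotisGondzio_2021_IPPMM} with the same $\alpha^*$, and both handle part (iii) by showing that the infeasibility residuals $\tilde r_p(\alpha),\tilde r_d(\alpha)$ pick up additive terms $\alpha\inerror_p^{k},\alpha\inerror_d^{k}$, which \Cref{assump:Newton-error-bound} bounds by $\tfrac{\sigma_{\min}}{4\mu_0}$-fractions of the neighborhood radii---precisely the slack left by the $\sigma_{\min}/2$ factor in \eqref{eq:lem-stepsize-mu-1}. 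The paper makes one point slightly more explicit than you do: the infeasibility constraints in part (iii) also impose $O(1/n^4)$ thresholds on $\bar\alpha$ (via quantities $\xi_2,\xi_{\mathcal A}=O(n^4\mu_k)$ coming from \Cref{lem:bdd-step}), so centrality is not the unique binding constraint but merely one of several at the same order.
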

\begin{proof} 
The proof follows that of \cite[Lemma 6]{PougkakiotisGondzio_2021_IPPMM}.
Again, we only sketch the outline of the proof and point out the necessary modifications.
The assumption $\inerror_{\mu}^{k} = 0$ in \Cref{assump:Newton-error-bound} guarantees the third block of Newton system \eqref{eq:inexact-Newton-RHS} is exact, 
so same argument in the proof of \cite[Lemma 6]{PougkakiotisGondzio_2021_IPPMM} guarantee that \eqref{eq:lem-stepsize-mu-1}--\eqref{eq:lem-stepsize-mu-3} hold for every $\alpha \in (0, \alpha^{\ast})$ with exactly the same $\alpha^{\ast}$ in \cite[Eq. (40)]{PougkakiotisGondzio_2021_IPPMM}, \ie
\begin{equation} \label{eq:alpha-ast}
\alpha^* \coloneqq \min \left\{\frac{\sigma_{\min }}{2 C_{\Delta}^2 n^3}, \frac{\sigma_{\min }\left(1-\gamma_\mu\right)}{2 C_{\Delta}^2 n^4}, \frac{0.49}{C_{\Delta}^2 n^3}, 1\right\},
\end{equation}
where $C_{\Delta} > 0$ is a constant such that $(\Delta x^{k})^T \Delta z^{k} \leq \|D_k^{-1} \Delta x^{k}\|_2 \|D_k \Delta z^{k}\|_2 \leq C_{\Delta}^2 n^4 \mu_k$ from \Cref{lem:bdd-step}.
Next, we must find the maximal $\bar{\alpha} \in (0, \alpha^{\ast}]$ such that 
\begin{equation} \label{eq:pf-in-neighborhood}
\left(x^{k}(\alpha), y^{k}(\alpha), z^{k}(\alpha)\right) \in \mathcal{N}_{\mu_k(\alpha)}(\zeta^{k}, \lambda^{k}), \quad \text{for all } \alpha \in (0, \bar{\alpha}],
\end{equation}
where $\mu_{k}(\alpha) = \frac{x(\alpha)^T z(\alpha)}{n}$ and
\begin{equation*}
\left(x^{k}(\alpha), y^{k}(\alpha), z^{k}(\alpha)\right) = 
\left(x^{k} + \alpha \Delta x^{k}, y^{k} + \alpha \Delta y^{k}, z^{k} + \alpha \Delta z^{k}\right).
\end{equation*}
By definitions \eqref{eq:C-tilde-set} and \eqref{eq:neighborhood}, \cref{eq:pf-in-neighborhood} is equivalent to
\begin{equation} \label{eq:pf-r-tilde-goal}
\left\|\tilde{r}_p(\alpha), \tilde{r}_d(\alpha)\right\|_2 \leq C_N \tfrac{\mu_k(\alpha)}{\mu_0}, \quad\left\|\tilde{r}_p(\alpha), \tilde{r}_d(\alpha)\right\|_{\mathcal{A}} \leq \gamma_{\mathcal{A}} \rho \tfrac{\mu_k(\alpha)}{\mu_0}, \quad \forall \alpha \in(0, \bar{\alpha}],
\end{equation}
where $\tilde{r}_p(\alpha)$ and $\tilde{r}_d(\alpha)$ are the residuals of two equalities in $\tilde{\mathcal{C}}_{\mu_k}\left( \zeta^{k}, \lambda^{k} \right)$ in \eqref{eq:C-tilde-set}: 
\begin{align*}
\tilde{r}_p(\alpha) &= A x^{k}(\alpha)+\mu_k(\alpha)\left(y^{k}(\alpha)-\lambda^{k}\right)-\left(b+\frac{\mu_k(\alpha)}{\mu_0} \bar{b}\right), \\
\tilde{r}_d(\alpha) &= -Q x^{k}(\alpha)+A^T y^{k}(\alpha)+z^{k}(\alpha)-\mu_k(\alpha)\left(x^{k}(\alpha)-\zeta^{k}\right)-\left(c+\frac{\mu_k(\alpha)}{\mu_0} \bar{c}\right).
\end{align*}
Following the calculations in the proof of \cite[Lemma 6]{PougkakiotisGondzio_2021_IPPMM}, now with the inexact Newton system \eqref{eq:inexact-Newton-RHS}, the two residuals $\tilde{r}_p(\alpha)$ and $\tilde{r}_d(\alpha)$ have additional inexact error terms $\alpha \inerror_{p}^{k}$ and $\alpha \inerror_{d}^{k}$ respectively:
{\small
\begin{align*}
\tilde{r}_p(\alpha) &= (1-\alpha) \frac{\mu_k}{\mu_0} \tilde{b}^{k} + \alpha^2\left(\sigma_k-1\right) \mu_k \Delta y^{k}
+\alpha^2 \tfrac{(\Delta x^{k})^T \Delta z^{k}}{n}\left(y^{k}-\lambda^{k}+\alpha \Delta y^{k}-\frac{\bar{b}}{\mu_0}\right) + \alpha \inerror_{p}^{k}, \\
\tilde{r}_d(\alpha) &= (1-\alpha) \frac{\mu_k}{\mu_0} \tilde{c}^{k} - \alpha^2\left(\sigma_k-1\right) \mu_k \Delta x^{k} 
-\alpha^2 \tfrac{(\Delta x^{k})^T \Delta z^{k}}{n}\left(x^{k}-\zeta^{k}+\alpha \Delta x^{k}+\frac{\bar{c}}{\mu_0}\right) + \alpha \inerror_{d}^{k}.
\end{align*}
}
Using the same quantities $\xi_2 = O(n^4 \mu_k)$ and $\xi_{\mathcal{A}} = O(n^4 \mu_k)$ defined as in \cite[Eq. (44)]{PougkakiotisGondzio_2021_IPPMM}, we have the bounds
\begin{align*}
\left\|(\tilde{r}_p(\alpha), \tilde{r}_d(\alpha))\right\|_2 &\leq (1-\alpha) C_N \frac{\mu_k}{\mu_0}+\alpha^2 \mu_k \xi_2 + \alpha \|(\inerror_{p}^{k}, \inerror_{d}^{k})\|_2, \\
\left\|(\tilde{r}_p(\alpha), \tilde{r}_d(\alpha))\right\|_{\mathcal{A}} &\leq (1-\alpha) \gamma_{\mathcal{A}} \rho \frac{\mu_k}{\mu_0}+\alpha^2 \mu_k \xi_{\mathcal{A}} + \alpha \|(\inerror_{p}^{k}, \inerror_{d}^{k})\|_{\mathcal{A}},
\end{align*}
for all $\alpha \in (0, \alpha^{\ast}]$, where $\alpha^{\ast}$ is given by \eqref{eq:alpha-ast}.
\Cref{assump:Newton-error-bound} on error bounds for the inexactness further yields that, for all $\alpha \in (0, \alpha^{\ast}]$, 
\begin{align}
\left\|\tilde{r}_p(\alpha), \tilde{r}_d(\alpha)\right\|_2 
&\leq
\frac{C_N}{\mu_0} \left(1 - \alpha \left(1 - \frac{\sigma_{\min}}{4} - \alpha \frac{\mu_0 \xi_2}{C_N} \right) \right) \mu_k, \label{eq:pf-r-tilde-1} \\
\left\|\tilde{r}_p(\alpha), \tilde{r}_d(\alpha)\right\|_{\mathcal{A}}
&\leq
\frac{\gamma_{\mathcal{A}} \rho}{\mu_0} \left(1 - \alpha \left(1 - \frac{\sigma_{\min}}{4} - \alpha \frac{\mu_0 \xi_{\mathcal{A}}}{\gamma_{\mathcal{A}} \rho} \right) \right) \mu_k. \label{eq:pf-r-tilde-2}
\end{align}
On the other hand, \eqref{eq:lem-stepsize-mu-1} implies $\mu_k(\alpha) \geq \left(1-\alpha\left(1-\frac{\sigma_{\min}}{2} \right)\right) \mu_k$ for all $\alpha \in (0, \alpha^{\ast}]$, which together with \eqref{eq:pf-r-tilde-1}--\eqref{eq:pf-r-tilde-2} yields
\begin{alignat*}{2}
\left\|\tilde{r}_p(\alpha), \tilde{r}_d(\alpha)\right\|_2 &\leq \frac{\mu_k(\alpha)}{\mu_0} C_N, \quad &&\text{for all } \alpha \in\left(0, \min \left\{\alpha^*, \frac{\sigma_{\min} C_N}{4 \xi_2 \mu_0}\right\}\right]; \\
\left\|\tilde{r}_p(\alpha), \tilde{r}_d(\alpha)\right\|_{\mathcal{A}} &\leq \frac{\mu_k(\alpha)}{\mu_0} \gamma_{\mathcal{A}} \rho, \quad &&\text{for all } \alpha \in\left(0, \min \left\{\alpha^*, \frac{\sigma_{\min} \gamma_{\mathcal{A}} \rho}{4 \xi_{\mathcal{A}} \mu_0}\right\}\right] .
\end{alignat*}
Finally, let
\begin{equation} \label{eq:alpha-bar}  
\bar{\alpha}^{k} \coloneqq \min \left\{\alpha^*, \frac{\sigma_{\min} C_N}{4 \xi_2 \mu_0}, \frac{\sigma_{\min} \gamma_{\mathcal{A}} \rho}{4 \xi_{\mathcal{A}} \mu_0} \right\} 
~\text{ and }~
\bar{\alpha} \coloneqq \min_{k \geq 0} \bar{\alpha}^{k}.
\end{equation} 
Since $\xi_2 = O(n^4 \mu_k)$, $\xi_{\mathcal{A}} = O(n^4 \mu_k)$, and $\mu_k$ is decreasing, there must exist a constant $\bar{\kappa}>0$, independent of $n, m$, and $k$ such that $\bar{\alpha}^{k} \geq \frac{\bar{\kappa}}{n^4}$ for all $k \geq 0$, and hence $\bar{\alpha} \geq \frac{\bar{\kappa}}{n^4}$. 
This proves \eqref{item:lem-stepsize-a} and \eqref{item:lem-stepsize-b}.
To prove \eqref{item:lem-stepsize-c}, we consider two cases:
\begin{itemize}
\item Suppose the estimates $\zeta^{k}$ and $\lambda^{k}$ are not updated, that is, $\zeta^{k+1} = \zeta^{k}$ and $\lambda^{k+1} = \lambda^{k}$.
The choice of $\bar{\alpha}$ in \eqref{eq:alpha-bar} guarantees \eqref{eq:pf-r-tilde-goal} and hence \eqref{eq:pf-in-neighborhood}, so that the new iterate in \eqref{eq:k+1-iterate} satisfies \eqref{eq:k+1-in-neighborhood}.

\item Suppose IP-PMM updates the estimates $\zeta^{k}$ and $\lambda^{k}$, that is, $\zeta^{k+1} = x^{k+1}$ and $\lambda^{k+1} = y^{k+1}$. This happens only when 
\begin{equation} \label{eq:update-cond}
\left\|(r_p, r_d) \right\|_2 \leq C_N \frac{\mu_{k+1}}{\mu_0}, \quad \left\|(r_p,r_d)\right\|_{\mathcal{A}} \leq \gamma_{\mathcal{A}} \rho \frac{\mu_{k+1}}{\mu_0},
\end{equation}
where $r_p = A x^{k+1} - (b + \frac{\mu_{k+1}}{\mu_0} \bar{b})$ and $r_d = (c + \frac{\mu_{k+1}}{\mu_0} \bar{c}) + Q x^{k+1} - A^T y^{k+1} - z^{k+1}$.
Condition \eqref{eq:update-cond} is exactly equivalent to \eqref{eq:k+1-in-neighborhood}.
\end{itemize}
\end{proof}

\subsection{Proof of \Cref{prop:errors-propagation}} 
\label{sec:pf-prop}
\begin{proof}
We omit the proof for $(\inerror_{d}^{k}, \inerror_{p}^{k}, \inerror_{\mu}^{k}) = (0, \inerrornormeq^{k}, 0)$ as it follows from direct calculations.
The equality and bounds for $2$-norm in \eqref{eq:prop-error-goal} follow directly from the fact $(\inerror_{d}^{k}, \inerror_{p}^{k}, \inerror_{\mu}^{k}) = (0, \inerrornormeq^{k}, 0)$ and the assumption $\| \inerrornormeq^{k} \|_2 \leq C \mu_k$.
To verify bound for semi-norm $\|(\inerror_{p}^{k}, \inerror_{d}^{k})\|_{\mathcal{A}}$, 
observe that 
\begin{equation*}
\|(\inerror_{p}^{k}, \inerror_{d}^{k})\|_{\mathcal{A}} 
= \|(0, \epsilon)\|_{\mathcal{\mathcal{A}}}
= \min_{x, y, z} \left\{ \|(x,z)\|_2 \mid Ax = 0, ~ -Q x + A^T y + z = \epsilon \right\},
\end{equation*}
and $(\tilde{x}, \tilde{y}, \tilde{z}) = (0, 0, \inerrornormeq^{k})$ is a solution to the systems $Ax = 0$ and $-Q x + A^T y + z = \inerrornormeq^{k}$, so that $\|(\inerror_{p}^{k}, \inerror_{d}^{k})\|_{\mathcal{\mathcal{A}}}  
\leq \left\|\left(\tilde{x}, \tilde{z}\right)\right\|_2 = \|(0, \inerrornormeq^{k})\|_2
\leq C \mu_k$.
\end{proof}

\subsection{Proof of \Cref{thm:main} and required lemmas} \label{sec:pf-Nys-IP-PMM-convergence}
This section presents the proof of \Cref{thm:main} and the necessary lemmas. 
\Cref{lem:NysPCG-relative-error} from \cite{zhao2022nysadmm} gives the number of Nystr\"om PCG iterations to obtain a solution within error $\varepsilon$.

\begin{lemma}[{\cite[Corollary 4.2]{zhao2022nysadmm}}] \label{lem:NysPCG-relative-error}
Let $\delta > 0$ and consider regularized system
\begin{equation} \label{eq:reg-normal-mu}
(N + \delta I) \Delta y = \xi.
\end{equation}
Suppose $P$ is the Nystr\"om preconditioner constructed as in \eqref{eq:Nys_precond} and sketch size 
$\ell \geq 8 \left( \sqrt{d_{\text{eff}}(N, \delta)} + \sqrt{8 \log(16/\eta)} \right)^2.$ Let $\Delta y^{\ast}$ denote the true solution of \eqref{eq:reg-normal-mu} and $\{\Delta y^{(t)}\}_{t \geq 1}$ denote the iterates generated by Nystr\"om PCG on problem \eqref{eq:reg-normal-mu}.
Then with probability at least $1 - \eta$, it holds true that $\|\Delta y^{(t)} - \Delta y^{\star}\|_2 \leq \varepsilon$ after $t = O(\log(\|\Delta y^{\star}\|_2 / \varepsilon))$ iterations.
\end{lemma}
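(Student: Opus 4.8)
The plan is to combine a high-probability spectral bound on the preconditioned operator with the textbook linear convergence rate of CG on a well-conditioned system. Write $\hat{A} := P^{-1/2}(N + \delta I)P^{-1/2}$ and recall that Nystr\"om PCG applied to \eqref{eq:reg-normal-mu} is exactly CG applied to $\hat{A} v = P^{-1/2}\xi$ under the substitution $v = P^{1/2}\Delta y$. First I would invoke the condition-number guarantee of \cite[Thm.~4.1]{zhao2022nysadmm}: the prescribed sketch size $\ell \geq 8(\sqrt{d_{\text{eff}}(N,\delta)} + \sqrt{8\log(16/\eta)})^2$ is exactly the threshold under which, with probability at least $1 - \eta$, the randomized Nystr\"om approximation $\hat{N}$ is accurate enough to force $\kappa(\hat{A}) \leq C_0$ for an absolute constant $C_0$ independent of $N$, $\delta$, and the dimension. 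This is the probabilistic core of the statement, resting on concentration of the Nystr\"om error $\|N - \hat{N}\|$ in terms of the effective dimension $d_{\text{eff}}(N,\delta)$.

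Next I would condition on this good event and apply the standard CG error bound to $\hat{A}$. Using the identity $\|v^{(t)} - v^\ast\|_{\hat{A}} = \|\Delta y^{(t)} - \Delta y^\ast\|_{N + \delta I}$, the contraction reads
\begin{equation*}
\|\Delta y^{(t)} - \Delta y^\ast\|_{N + \delta I} \leq 2 q^t \|\Delta y^{(0)} - \Delta y^\ast\|_{N + \delta I}, \qquad q := \frac{\sqrt{\kappa(\hat{A})} - 1}{\sqrt{\kappa(\hat{A})} + 1}.
\end{equation*}
Because $\kappa(\hat{A}) \leq C_0$, the factor $q$ is an absolute constant strictly below $1$, so the energy-norm error contracts geometrically at a rate independent of $n$, $m$, and $\delta$.

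Finally I would pass from the energy norm to the Euclidean norm. Using $\lambda_{\min}(N + \delta I) \geq \delta$, the zero initialization $\Delta y^{(0)} = 0$, and $\|\Delta y^\ast\|_{N + \delta I} \leq \sqrt{\lambda_{\max}(N + \delta I)}\,\|\Delta y^\ast\|_2$,
\begin{equation*}
\|\Delta y^{(t)} - \Delta y^\ast\|_2 \leq \delta^{-1/2}\|\Delta y^{(t)} - \Delta y^\ast\|_{N + \delta I} \leq 2\sqrt{\kappa(N + \delta I)}\, q^t \|\Delta y^\ast\|_2.
\end{equation*}
Requiring the right-hand side to be at most $\varepsilon$ and solving for $t$ yields $t \geq \tfrac{1}{\log(1/q)}\log\bigl(2\sqrt{\kappa(N + \delta I)}\,\|\Delta y^\ast\|_2 / \varepsilon\bigr)$. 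Since $\tfrac{1}{\log(1/q)}$ is an absolute constant and $\kappa(N + \delta I)$ enters only through an additive $\tfrac12\log\kappa(N + \delta I)$ term absorbed into the $O(\cdot)$, this gives $t = O(\log(\|\Delta y^\ast\|_2 / \varepsilon))$ iterations on the good event.

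The main obstacle is the very first step, the high-probability bound $\kappa(\hat{A}) \leq C_0$: this is the genuinely randomized, nontrivial part of the argument and requires matrix concentration for the Nystr\"om error controlled by $d_{\text{eff}}(N,\delta)$. In the present paper it is imported wholesale from \cite[Thm.~4.1]{zhao2022nysadmm}, so all that remains is the deterministic CG contraction and the norm conversion above, both routine.
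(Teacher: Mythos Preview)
The paper does not supply its own proof of this lemma: it is quoted verbatim as \cite[Corollary~4.2]{zhao2022nysadmm} and used as a black box in the proof of \Cref{lem:NysPCG-for-IPPMM}. So there is no argument in the paper to compare against; your sketch is in fact more than the paper provides.

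Your outline is the standard route and matches how the cited result is obtained: invoke \cite[Thm.~4.1]{zhao2022nysadmm} to bound $\kappa(\hat A)$ by an absolute constant on the good event, then apply the textbook CG contraction in the $(N+\delta I)$-norm, then convert to the Euclidean norm. The one point to tighten is the last step. Your bound $\|\Delta y^{(t)}-\Delta y^\ast\|_2 \le 2\sqrt{\kappa(N+\delta I)}\,q^t\|\Delta y^\ast\|_2$ produces an additive $\tfrac12\log\kappa(N+\delta I)$ in the iteration count; calling this ``absorbed into the $O(\cdot)$'' is not quite right if the constant in $O(\log(\|\Delta y^\ast\|_2/\varepsilon))$ is meant to be absolute, since $\kappa(N+\delta I)$ is problem-dependent. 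In the present paper this is harmless because the only consumer, \Cref{lem:NysPCG-for-IPPMM}, immediately reintroduces $\lambda_{\max}(N_\mu)$ and $\mu$ into the logarithm anyway, so the extra $\log\kappa$ folds into the same $O(\log(n/\varepsilon\mu))$ bound. If you want the statement exactly as written, use instead that on the good event $N+\delta I \succeq c\,P \succeq c\,I$ (the Nystr\"om preconditioner \eqref{eq:Nys_precond} has $\lambda_{\min}(P)=1$), which lets you lower-bound the energy norm by the Euclidean norm with an absolute constant rather than $\delta^{-1/2}$.
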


Based on \Cref{lem:NysPCG-relative-error}, we present next \Cref{lem:NysPCG-for-IPPMM} showing that at each iteration $k$ of Nys-IP-PMM, Nystr\"om PCG can return a solution with small residual error after a few iterations, when the Newton system \eqref{eq:inexact-Newton-RHS} is solved by the normal equations.
For simplicity of presentation, we drop the iteration superindex $k$ in the statement and proof of the lemma.

\begin{lemma} \label{lem:NysPCG-for-IPPMM}
Let $N = A (Q + \Theta^{-1} + \mu I_n)^{-1} A^T$. 
Given \Cref{assump:bdd-opt-sol,assump:constraint-mat-full-rank}, suppose the regularized normal equations \eqref{eq:normal-eq-inexact} is solved by PCG using the randomized Nystr\"om preconditioner \eqref{eq:Nys_precond} with $\delta = \mu$ and sketch size 
\begin{equation} \label{eq:lem-NysPCG-sketch-size}
\ell \geq 8 \left( \sqrt{d_{\text{eff}}(N, \mu)} + \sqrt{8 \log(16/\eta)} \right)^2,
\end{equation}
where $d_{\text{eff}}(\cdot)$ is the effective dimension of $N$ defined in \eqref{eq:effective-dim}.
Let $\{\Delta y^{(t)}\}_{t \geq 1}$ denote the iterates generated by Nystr\"om PCG.
Then, with probability at least $1 - \eta$, after $t = O\left(\log(\frac{n}{\varepsilon \mu})\right)$ iterations, the residual satisfies 
\begin{equation} \label{eq:lem-residual-goal}
\left\|(N + \mu I) \Delta y^{(t)} - \xi \right\|_2 \leq \varepsilon.
\end{equation}
\end{lemma}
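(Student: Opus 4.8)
The plan is to reduce the residual bound \eqref{eq:lem-residual-goal} to the relative-error guarantee already established in \Cref{lem:NysPCG-relative-error}, since that lemma controls the error $\|\Delta y^{(t)} - \Delta y^\star\|_2$ rather than the residual directly. First I would write $\Delta y^\star = (N + \mu I)^{-1}\xi$ for the exact solution of \eqref{eq:normal-eq-inexact} and observe that the residual factors cleanly as
\begin{equation*}
(N + \mu I)\Delta y^{(t)} - \xi = (N + \mu I)\left(\Delta y^{(t)} - \Delta y^\star\right),
\end{equation*}
so that $\|(N + \mu I)\Delta y^{(t)} - \xi\|_2 \le \|N + \mu I\|_2\,\|\Delta y^{(t)} - \Delta y^\star\|_2$. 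It therefore suffices to drive the error below $\varepsilon / \|N + \mu I\|_2$ and then account for the corresponding iteration count.

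The two quantities I would then control are $\|N + \mu I\|_2$ and $\|\Delta y^\star\|_2$. For the operator norm, since $Q \succeq 0$ and $\Theta^{-1} \succeq 0$ give $Q + \Theta^{-1} + \mu I_n \succeq \mu I_n$, we have $\|(Q + \Theta^{-1} + \mu I_n)^{-1}\|_2 \le 1/\mu$; combining this with $\eta_{\max}(A) \le C_{\max}$ from \Cref{assump:constraint-mat-full-rank} yields $\|N\|_2 \le C_{\max}^2/\mu$ and hence $\|N + \mu I\|_2 \le C_{\max}^2/\mu + \mu$. For the solution norm, I would note that $\Delta y^\star$ is exactly the $\Delta y$-component of the solution of the \emph{exact} Newton system, i.e. the case $\inerror_d = \inerror_p = \inerror_\mu = 0$ of \eqref{eq:inexact-Newton-RHS}; these zero residuals trivially satisfy any required bound, so \Cref{lem:bdd-step} applies and gives $\|\Delta y^\star\|_2 = O(n^3)$.

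Finally I would invoke \Cref{lem:NysPCG-relative-error} with target accuracy $\varepsilon' := \varepsilon/\|N + \mu I\|_2$. Its sketch-size hypothesis is identical to \eqref{eq:lem-NysPCG-sketch-size} (both taken with $\delta = \mu$), so the probability-$(1-\eta)$ guarantee transfers verbatim, and the lemma produces $\|\Delta y^{(t)} - \Delta y^\star\|_2 \le \varepsilon'$ — hence residual at most $\varepsilon$ — after
\begin{equation*}
t = O\!\left(\log\frac{\|\Delta y^\star\|_2\,\|N + \mu I\|_2}{\varepsilon}\right) = O\!\left(\log\frac{n^3 (C_{\max}^2/\mu + \mu)}{\varepsilon}\right) = O\!\left(\log\frac{n}{\varepsilon \mu}\right)
\end{equation*}
iterations, where the last step absorbs $C_{\max}$, the polynomial factor $n^3$, and the additive $\mu$ (negligible for small $\mu$) into the $O(\cdot)$.

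The step requiring the most care is the bound on $\|\Delta y^\star\|_2$: one must recognize that the \emph{true} normal-equations solution corresponds to a zero-inexactness search direction so that \Cref{lem:bdd-step} is legitimately applicable, and then track that its $O(n^3)$ dependence enters only logarithmically. This logarithmic entry is precisely what keeps the final count at $O(\log(n/(\varepsilon\mu)))$ despite the $1/\mu$ blow-up of $\|N + \mu I\|_2$ as $\mu \to 0$; the remaining manipulations are routine norm estimates.
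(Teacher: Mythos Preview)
Your proposal is correct and follows essentially the same approach as the paper's proof: bound the residual via $\|r^{(t)}\|_2 \le \lambda_{\max}(N+\mu I)\,\|\Delta y^{(t)}-\Delta y^\star\|_2$, invoke \Cref{lem:NysPCG-relative-error} with target $\varepsilon/\lambda_{\max}(N+\mu I)$, and then use \Cref{assump:constraint-mat-full-rank} for $\lambda_{\max}(N+\mu I)=O(1/\mu)$ and \Cref{lem:bdd-step} for $\|\Delta y^\star\|_2=O(n^3)$. Your explicit remark that the exact normal-equations solution corresponds to zero inexactness residuals, so that \Cref{lem:bdd-step} legitimately applies, is a point the paper leaves implicit.
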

\begin{proof}
We abbreviate the regularized matrix as $N_{\mu} = N + \mu I$ and let $\Delta y^{\star}$ denote the true solution of \eqref{eq:normal-eq} (with $\rho = \delta = \mu$). 
Let $r^{(t)} = N_{\mu} \Delta y^{(t)} - \xi$ denote the residual at $t$-th iteration.
It follows from norm consistency and $\|N_{\mu}\|_2 = \lambda_{\max}(N_{\mu})$ that $\|r^{(t)}\|_2 \leq \lambda_{\max}(N_{\mu}) \|\Delta y^{(t)} - \Delta y^{\star}\|_2$.
Hence, we would have $\|r^{(t)}\|_2 \leq \varepsilon$ if 
\begin{equation} \label{eq:lem-residual-2}
\|\Delta y^{(t)} - \Delta y^{\star}\|_2 \leq \varepsilon/\lambda_{\max}(N_{\mu}).
\end{equation}
By \Cref{lem:NysPCG-relative-error}, with probability at least $1 - \eta$, inequality \eqref{eq:lem-residual-2} holds true after  
\begin{equation} \label{eq:lem-residual-3}
t = O\left( \log\left(\frac{ \lambda_{\max}(N_{\mu})  \| \Delta y^{\star} \|_2}{\varepsilon}\right) \right)
\end{equation}
PCG iterations. 
\Cref{assump:constraint-mat-full-rank} and \Cref{lem:bdd-step} respectively guarantee that
\begin{align*}
\lambda_{\max}(N_{\mu}) 
= \left\| A (Q + \Theta + \mu I)^{-1} A^T \right\|_2 + \mu
= O\left( \frac{1}{\mu} \right) \text{ and } \left\|\Delta y^{\star} \right\|_{2}
= O\left( n^3 \right).
\end{align*}
Therefore, the required number of iterations \eqref{eq:lem-residual-3} becomes $t = O\left( \log( \frac{n}{\varepsilon \mu} ) \right).$
\end{proof}

\begin{proof}[Proof of \Cref{thm:main}]
To guarantee the convergence, the inexact errors of IP-PMM have to satisfy \Cref{assump:Newton-error-bound}, which holds true if, by \Cref{prop:errors-propagation}, the inexact error $\inerrornormeq^{k}$ in \eqref{eq:normal-eq-inexact} satisfies
\begin{equation} \label{eq:alg-Nys-IP-PMM-error}
\| \inerrornormeq^{k} \|_2 \leq \frac{\sigma_{\min}}{4 \mu_0} \min\left\{ C_N, \gamma_{\mathcal{A}} \rho \right\} \mu_k.
\end{equation}
At each iteration $k$ of Nys-IP-PMM, we take the sketch size $\ell_k$ as in \eqref{eq:main-thm-sketch-size} and $\eta_k \coloneqq \frac{1}{(k+2)^4}$ for all $k \geq 0$, so that \eqref{eq:lem-NysPCG-sketch-size} in \Cref{lem:NysPCG-for-IPPMM} is satisfied, which guarantees, with probability at least $1 - \eta_k$, that Nystr\"om PCG satisfies \eqref{eq:alg-Nys-IP-PMM-error} with order of iterations given by $O(\log \frac{n}{C \mu_k^2}) = O(\log (n / \varepsilon))$, where $C \coloneqq \frac{\sigma_{\min}}{4 \mu_0} \min\left\{ C_N, \gamma_{\mathcal{A}} \rho \right\}$.
To get this, we use the fact that $\frac{1}{\mu_k} = O(\frac{1}{\varepsilon})$ before Nys-IP-PMM terminates.
By intersecting all these events across iteration count $k$, we conclude: with probability at least $1 - \sum_{k = 0}^{\infty} \frac{1}{(k+2)^4} \approx 0.91$.
Nystr\"om PCG terminates within $t = O(\log \frac{n}{\varepsilon})$ steps and the residual error of PCG satisfies \eqref{eq:alg-Nys-IP-PMM-error} for all $k \geq 0$, which proves \eqref{item:thm-main-i}. 

\Cref{prop:errors-propagation} then guarantees that $(\Delta x^{k}, \Delta y^{k}, \Delta z^{k})$ satisfies inexact Newton system \eqref{eq:inexact-Newton-RHS} with inexact errors satisfying \Cref{assump:Newton-error-bound}.
Now, we are under the assumptions of \Cref{thm:inexact-IPPMM-QP}, so we have $\mu_k \leq \varepsilon$ after $k = O(n^4 \log \frac{1}{\varepsilon})$ iterations of Nys-IP-PMM, as required in \eqref{item:thm-main-ii}.
\end{proof}
\section{Details on implementation of Nys-IP-PMM}

Nys-IP-PMM deviates from the theory in order to improve the computational efficiency, following  \cite{PougkakiotisGondzio_2021_IPPMM} in the following two main aspects: 
First, we do not limit to $\rho_k = \delta_k = \mu_k$ as theory does. 
The proximal parameters $\rho_k$ and $\delta_k$ are independently updated following the suggestions from \cite[Algorithm PEU]{PougkakiotisGondzio_2021_IPPMM}. 
Second, the iterates of the method are not required to lie in the neighborhood as in \eqref{eq:neighborhood} for efficiency.

The followings provide more details/explanations for the implementation of Nys-IP-PMM. 
\Cref{sec:derivation-free-box-newton} de\-rives the Newton system to be solved at each Nys-IP-PMM iteration for QP instance taking the form $(\tilde{\text{P}})$--$(\tilde{\text{D}})$ and the resulting normal equations.
\Cref{sec:initial-pt} details the construction of practical initial point.

\subsection{Derivations of Newton system and equations \eqref{eq:normal-eq-free-box}--\eqref{eq:delta-s-free-box}}
\label{sec:derivation-free-box-newton}

Given QP in the form $(\tilde{\text{P}})$--$(\tilde{\text{D}})$, IP-PMM solves a sequence of subproblems taking the form \eqref{eq:PMM-subproblem-free-box}. 
Introducing the logarithmic barrier function to enforce the constraints $x^{\I} \geq 0$ and $0 \leq x^{\J} \leq u^{\J}$ in \eqref{eq:PMM-subproblem-free-box}, the Lagrangian to minimize becomes
\begin{align*}
\mathcal{L}(x) = \frac{1}{2} x^T Q x + c^T x + (\lambda^{k})^T (b - Ax) &+ \frac{1}{2 \delta_k} \|Ax - b\|_2^2 + \frac{\rho_k}{2} \|x - \zeta^{k}\|^2 \\
&- \mu_k \sum_{j \in \I\J} \ln x_{j} - \mu_k \sum_{j \in \J} \ln(u_{j} - x_{j}).
\end{align*}
Setting $\nabla_x \mathcal{L}(x) = 0$ and introducing the new variables
\begin{alignat*}{2}
&y = \lambda^{k} - \frac{1}{\mu_k}(Ax-b), \quad
&&z_{j} = 
\begin{cases}
\mu_k x_j^{-1}, &\text{if } j \in \I\J; \\
0, &\text{if } j \in \F,
\end{cases} 
\\
&w_{j} = 
\begin{cases}
u_{j} - x_j, &\text{if } j \in \J; \\
0, &\text{if } j \in \I\F,
\end{cases} \quad 
&&s_{j} = 
\begin{cases}
\mu_k w_j^{-1}, &\text{if } j \in \J; \\
0, &\text{if } j \in \I\F,
\end{cases}
\end{alignat*}
we obtain the following (non-linear) system to solve:
\begin{equation} \label{eq:upper-bound-F=0}
\begin{bmatrix}
c + Qx - A^T y -z + s + \rho_k (x - \zeta^{k}) \\
Ax - b + \delta_k(y - \lambda^{k}) \\
x^{\J} + w^{\J} - u^{\J} \\
\diag(x^{\I\J}) z^{\I\J} - \mu_k \mathbbm{1}_{|\I\J|} \\
\diag(w^{\J}) s^{\J} - \mu_k \mathbbm{1}_{|\J|}
\end{bmatrix}
=
\begin{bmatrix}
0 \\ 0 \\ 0 \\ 0 \\ 0
\end{bmatrix}.
\end{equation}
Applying Newton's method to \eqref{eq:upper-bound-F=0} yields the Newton system taking the form:
\begin{equation} \label{eq:Newton-free-box}
\begin{bmatrix}
-\left(Q+\rho_k I_n\right) & A^T & 0 & I_n & -I_n \\
A & \delta_k I_m & 0 & 0 & 0 \\
E_{\J} & 0 & E_{\J} & 0 & 0 \\
E_{\I\J}Z_k & 0 & 0 & E_{\I\J}X_k & 0 \\
0 & 0 & E_{\J}S_k & 0 & E_{\J}W_k
\end{bmatrix}\begin{bmatrix}
\Delta x^{k} \\
\Delta y^{k} \\
\Delta w^{k} \\
\Delta z^{k} \\
\Delta s^{k}
\end{bmatrix}
=
\begin{bmatrix}
r_{d} \\
r_{p} \\
(r_{u})_{\J} \\
(r_{xz})_{\I\J} \\
(r_{ws})_{\J}
\end{bmatrix}
\end{equation}
where $E_{\mathcal{S}}: \R^{n} \rightarrow \R^{|\mathcal{S}|}$ is the projection matrix such that $E_{\mathcal{S}} x = x^{\mathcal{S}}$ for all $x \in \R^n$, the uppercase letters $X^k, Z^k, W^k, S^k$ represent diagonal matrices with diagonal entries corresponding to the iterates in the associated lowercase letters, and $r_d, r_p, r_u, r_{xz}, r_{ws}$ are appropriate RHS vectors.
By eliminating the variables $\Delta x^{k}, \Delta w^{k}, \Delta z^{k},$ and $\Delta s^{k}$, the Newton system \eqref{eq:Newton-free-box} simplifies to the normal equations \eqref{eq:normal-eq-free-box} of size $m$, and four closed-form formulae \eqref{eq:delta-x-free-box}--\eqref{eq:delta-s-free-box}.

\subsection{Construction for initial point} \label{sec:initial-pt}
The construction is based on the development of initial point for original IP-PMM in \cite{PougkakiotisGondzio_2021_IPPMM}. 
We first construct a candidate point by ignoring the non-negative constraints and solving the primal and dual equality constraints $Ax = b$ and $-Qx + A^T y + z - s = c$ in $(\tilde{\text{P}})$--$(\tilde{\text{D}})$.
We force primal candidate $\tilde{x}$ satisfy $Ax = b$ while centering around $u / 2$, and solve the dual candidate $\tilde{y}$ from the least squares problem $\min_{y} \|-Q \tilde{x} + A^T y - c\|_2$, ignoring $z$ and $s$ in dual constraint meanwhile.
Then dual candidates $\tilde{z}$ and $\tilde{s}$ are chosen such that dual constraint $-Q \tilde{x} + A^T \tilde{y} + \tilde{z} - \tilde{s} = c$ is satisfied. The primal candidate $\tilde{w}$ is set as $\tilde{w}_{\J} = u_{\J} - \tilde{x}_{\J}$ and $\tilde{w}_{\I\F} = 0$. In summary, the candidate point is:
\begin{align*}
&\tilde{x} = \frac{u}{2} + A^T (A A^T)^{-1} \left(b - \frac{1}{2} Au \right), \quad
\tilde{y} = (A A^T)^{-1} A (c + Q \tilde{x}), \\
&\tilde{z}_{\J} = \frac{1}{2} \left( c - A^T \tilde{y} + Q \tilde{x} \right)_{\J}, \quad \tilde{z}_{\I\F} = \left( c - A^T \tilde{y} + Q \tilde{x} \right)_{\I\F}, \\
&\tilde{s}_{\J} = - \tilde{z}_{\J}, \quad \tilde{s}_{\I\F} = 0, \quad \tilde{w}_{\J} = u_{\J} - \tilde{x}_{\J}, \quad \tilde{w}_{\I\F} = 0.
\end{align*}
However, to ensure the stability and efficiency, we regularize the matrix $A A^T$ as in \cite{PougkakiotisGondzio_2021_IPPMM} and use Nystr\"om PCG to solve for the systems in $\tilde{x}$ and $\tilde{y}$, for which the regularization parameter is taken as $\delta = 10$.

Next, to guarantee the positivity of $x_{\I\J}$, $z_{\I\J}$, $w_{\J}$, and $s_{\J}$, we compute the following quantities:
\begin{align*}
&\delta_{p} = \max\{ -1.5 \min(\tilde{x}_{\I\J}), -1.5 \min(\tilde{w}_{\J}), 0 \}, \\ 
&\delta_{d} = \max\{ -1.5 \min(\tilde{z}_{\I\J}), -1.5 \min(\tilde{s}_{\J}), 0 \}, \\
&\gamma_{xz} = (\tilde{x}_{\I\J} + \delta_{p} \mathbbm{1}_{|\I\J|})^T (\tilde{z}_{\I\J} + \delta_{d} \mathbbm{1}_{|\I\J|}), \\
&\gamma_{ws} = (\tilde{w}_{\J} + \delta_{p} \mathbbm{1}_{|\J|})^T (\tilde{s}_{\J} + \delta_{d} \mathbbm{1}_{|\J|}), \\
&\tilde{\delta}_{p} = \delta_{p} + 0.5 \frac{\gamma_{xz} + \gamma_{ws}}{\sum_{\ell \in \I\J} (\tilde{z}^{\ell} + \delta_{d}) + \sum_{\ell \in \J} (\tilde{s}^{\ell} + \delta_{d})}, \\
&\tilde{\delta}_{d} = \delta_{d} + 0.5 \frac{\gamma_{xz} + \gamma_{ws}}{\sum_{\ell \in \I\J} (\tilde{x}^{\ell} + \delta_{d}) + \sum_{\ell \in \J} (\tilde{w}^{\ell} + \delta_{d})}.
\end{align*}
Finally, we set the final initial point as:
\begin{equation} \label{eq:initial-pt-pratical}  
\begin{aligned}
&y^{0} = \tilde{y}, \quad
x^{0}_{\I\J} = \tilde{x}_{\I\J} + \tilde{\delta}_{p} \mathbbm{1}_{|\I\J|}, \quad x^{0}_{\F} = \tilde{x}_{\F}, \\
&w^{0}_{\J} = \tilde{w}_{\J} + \tilde{\delta}_{p} \mathbbm{1}_{|\J|}, \quad w^{0}_{\I\F} = 0, \\
&z^{0}_{\I\J} = \tilde{z}_{\I\J} + \tilde{\delta}_{d} \mathbbm{1}_{|\I\J|}, \quad z^{0}_{\F} = 0, \quad
s^{0}_{\J} = \tilde{s}_{\J} + \tilde{\delta}_{d} \mathbbm{1}_{|\J|}, \quad s^{0}_{\I\F} = 0.
\end{aligned}
\end{equation}

\subsection{Stepsizes}
The primal and dual stepsizes are chosen as in standard IPMs, which ensure that the updated iterate remains in the non-negative orthant after transitioning from the current iterate $x^k$, $z^k$, $w^k$, and $s^k$.
Given the search directions $\Delta x^k$, $\Delta z^k$, $\Delta w^k$, and $\Delta s^k$, denote the sets of indices for negative components by
\begin{align*}
I_{x} &= \{i \in \mathcal{I} \cup \mathcal{J} \mid \Delta x^k_{i} < 0 \}; \quad I_{w} = \{i \in \mathcal{J} \mid \Delta w^k_{i} < 0 \}; \\
I_{z} &= \{i \in \mathcal{I} \cup \mathcal{J} \mid \Delta z^k_{i} < 0 \}; \quad I_{s} = \{i \in \mathcal{J} \mid \Delta s^k_{i} < 0 \}.
\end{align*}
The primal and dual stepsizes are defined as follows: 
\begin{align}   
\alpha_{p} &= 0.995 \min\left\{1, \min_{i \in I_x} \left\{- \frac{x^k_{i}}{\Delta x^k_{i}}\right\} , \min_{i \in I_w} \left\{- \frac{w^k_{i}}{\Delta w^k_{i}}\right\} \right\}, \label{eq:stepsize-p} \\
\alpha_{d} &= 0.995 \min\left\{1, \min_{i \in I_z} \left\{- \frac{z^k_{i}}{\Delta z^k_{i}}\right\} , \min_{i \in I_s} \left\{- \frac{s^k_{i}}{\Delta s^k_{i}}\right\} \right\}. \label{eq:stepsize-d}
\end{align}
\section{Experimental Details}

\subsection{QP formulation for portfolio optimization} \label{sec:factor-model}

Consider a portfolio optimization problem, which aims at determining the asset allocation to maximize risk-adjusted returns while constraining correlation with market indexes or competing portfolios:
\begin{equation} \label{eq:portfolio-supp}
\begin{array}{cl}
   \minf  & \displaystyle - r^T x + \gamma x^T \Sigma x \\
   \st  & M x \leq u, ~ \mathbbm{1}_n^T x = 1, ~ x \geq 0,
\end{array}
\end{equation}
where variable $x \in \mathbb{R}^n$ represents the portfolio, $r \in \mathbb{R}^n$ denotes the vector of expected returns, $\gamma > 0$ denotes the risk aversion parameter, $\Sigma \in \mathbb{S}_n^{+}(\mathbb{R})$ represents the risk model covariance matrix, each row of $M \in \mathbb{R}^{d \times n}$ represents another portfolio, and $u \in \mathbb{R}^{d}$ upper bounds the correlations.

We assume a factor model for the covariance matrix $\Sigma = F F^T + D$, where $F \in \R^{n \times s}$ is the factor loading matrix and $D \in \R^{n \times n}$ is a diagonal matrix representing asset-specific risk. 
By replacing $\Sigma$ with $F F^T + D$ in \eqref{eq:portfolio-supp} and introducing a new variable $y = F^T x$, we write an equivalent problem in variables $x$ and $y$:
\begin{equation} \label{eq:portfolio-risk}
\begin{array}{cl}
   \minf  & \displaystyle - \gamma^{-1} r^T x + x^T D x + y^T y \\
   \st  & y = F^T x, ~ M x \leq u, ~ \mathbbm{1}_n^T x = 1, ~ x \geq 0, ~ y: \text{free}.
\end{array}
\end{equation}
Problem \eqref{eq:portfolio-risk} can be transformed into form $(\tilde{\text{P}})$ via standard techniques.

\subsection{Support vector machine (SVM) formulations in QP} \label{sec:SVM-formulation}

The linear support vector machine (SVM) problem solves a binary classification task on $n$ samples with $d$ features \cite[Chapter 12]{deisenroth2020mathematics}. 
Let $X \in \R^{d \times n}$ be a feature matrix whose columns are the attribute vectors $x_i \in \R^{d}$ associated with the $i$-th sample, $i = 1, \ldots, n$, and let $y_i \in \{-1, 1\}$ be the corresponding binary classification label.
The dual linear SVM with $\ell_1$-regularization can be formulated as a convex quadratic program \cite{fine2001efficient, woodsend2011exploiting, gondzio2009exploiting, vciegis2009high, woodsend2009hybrid}:
\begin{equation} \label{eq:dual-SVM}
\begin{array}{cl}
   \minf & \displaystyle\frac{1}{2} v^T v - \sum_{i=1}^n p_i \\
   \st  & v - X \diag(y) p = 0, ~ y^Tp = 0, \\
    & v \text{: free}, ~ 0 \leq p_i \leq \tau, ~ i = 1, \ldots, n,
\end{array}
\end{equation} 
where $v \in \R^{d}$ and $p \in \R^n$ are optimization variables, and $\tau > 0$ is the penalty parameter for misclassification.
The dual SVM problem \eqref{eq:dual-SVM} can be formulated into form $(\tilde{\text{P}})$ by setting
\begin{gather*}
x = \begin{bmatrix}
v \\ p
\end{bmatrix} \in \R^{d + n}, ~ 
Q = \begin{bmatrix}
I_d & \mathbf{0} \\ \mathbf{0} & \mathbf{0}
\end{bmatrix} \in \R^{(d+n) \times (d+n)}, \\ 
A = \begin{bmatrix}
I_d & -X \diag(y) \\
\mathbf{0} & y^T
\end{bmatrix} \in \R^{(d + 1) \times (d+n)}, ~
c = \begin{bmatrix}
\mathbf{0}_d \\ \mathbbm{1}_n
\end{bmatrix} \in \R^{d + n}, ~
b = \mathbf{0}_{d+1}, \\
\F = \{1, 2, \ldots, d\}, ~\I = \emptyset , ~ \J = \{d+1, \ldots, d+n\}, ~u^{\J} = \tau \mathbbm{1}_{|\J|}.
\end{gather*}
Note that the constraint matrix $A$ has a dense $(1, 2)$-block if the feature matrix $X$ is dense. 

\subsection{Regularization parameters in the experiments} \label{sec:reg-params}

\Cref{tab:reg_table_1} and \Cref{tab:reg_table_2} present the primal and dual regularization parameters $\rho_k$ and $\delta_k$ for Nys-IP-PMM on problems in \cref{sec:numerical-exp}.

\begin{table}[H]
   \centering
   \resizebox{\linewidth}{!}{
   \begingroup
   \begin{tabular}{ccccccccccc}
   \toprule
    & \multicolumn{2}{c}{\begin{tabular}[c]{@{}c@{}}\textbf{Portfolio} \\ \textbf{(section 5.1)}\end{tabular}} & \multicolumn{2}{c}{\begin{tabular}[c]{@{}c@{}}\textbf{CIFAR10\_1000} \\ \textbf{(section 5.2.2)}\end{tabular}} & \multicolumn{2}{c}{\textbf{RNASeq}} & \multicolumn{2}{c}{\textbf{SensIT}} & \multicolumn{2}{c}{\textbf{sector}} \\
   \cmidrule(lr){2-3} \cmidrule(lr){4-5} \cmidrule(lr){6-7} \cmidrule(lr){8-9} \cmidrule(lr){10-11} 
   \(k\) & \(\rho_k\) & \(\delta_k\) & \(\rho_k\) & \(\delta_k\) & \(\rho_k\) & \(\delta_k\) & \(\rho_k\) & \(\delta_k\) & \(\rho_k\) & \(\delta_k\) \\
   \midrule
       0 & \num{8.00e+00} & \num{8.00e+00} & \num{8.00e+00} & \num{8.00e+00} & \num{8.00e+00} & \num{8.00e+00} & \num{8.00e+00} & \num{8.00e+00} & \num{8.00e+00} & \num{8.00e+00} \\
       1 & \num{7.12e-01} & \num{7.12e-01} & \num{3.81e+00} & \num{1.71e+00} & \num{2.70e+00} & \num{4.03e-02} & \num{3.56e+00} & \num{1.33e+00} & \num{2.96e+00} & \num{2.96e+00} \\
       2 & \num{1.02e-01} & \num{1.02e-01} & \num{1.82e+00} & \num{3.67e-01} & \num{2.64e-02} & \num{3.95e-04} & \num{8.03e-01} & \num{3.00e-01} & \num{1.13e+00} & \num{2.15e-01} \\
       3 & \num{4.77e-02} & \num{2.03e-02} & \num{1.24e+00} & \num{2.50e-01} & \num{5.21e-03} & \num{7.77e-05} & \num{2.42e-01} & \num{9.05e-02} & \num{8.98e-01} & \num{1.48e-01} \\
       4 & \num{2.37e-02} & \num{4.95e-03} & \num{8.19e-01} & \num{1.65e-01} & \num{2.24e-03} & \num{3.34e-05} & \num{1.53e-01} & \num{5.73e-02} & \num{7.14e-01} & \num{1.02e-01} \\
       5 & \num{3.40e-03} & \num{7.10e-04} & \num{5.87e-01} & \num{1.19e-01} & \num{1.63e-03} & \num{1.98e-05} & \num{8.99e-02} & \num{3.36e-02} & \num{6.07e-01} & \num{8.71e-02} \\
       6 & \num{1.91e-04} & \num{3.99e-05} & \num{1.22e-01} & \num{2.45e-02} & \num{1.39e-03} & \num{1.68e-05} & \num{4.95e-02} & \num{1.85e-02} & \num{3.27e-01} & \num{4.70e-02} \\
       7 & \num{2.86e-05} & \num{5.97e-06} & \num{5.66e-02} & \num{1.14e-02} & \num{1.16e-03} & \num{1.40e-05} & \num{2.90e-02} & \num{1.08e-02} & \num{2.18e-01} & \num{3.13e-02} \\
       8 & \num{5.13e-06} & \num{1.07e-06} & \num{1.31e-02} & \num{2.64e-03} & \num{1.13e-03} & \num{1.35e-05} & \num{1.59e-02} & \num{5.94e-03} & \num{1.97e-01} & \num{2.82e-02} \\
       9 & \num{2.17e-06} & \num{1.42e-07} & \num{2.84e-03} & \num{5.74e-04} & \num{1.10e-03} & \num{1.29e-05} & \num{9.83e-03} & \num{3.67e-03} & \num{1.21e-01} & \num{1.74e-02} \\
       10 & \num{1.57e-06} & \num{1.03e-07} & \num{4.72e-04} & \num{9.53e-05} & \num{1.08e-03} & \num{1.28e-05} & \num{5.04e-03} & \num{1.88e-03} & \num{8.03e-02} & \num{1.15e-02} \\
       11 & \num{3.13e-07} & \num{2.05e-08} & \num{5.39e-06} & \num{1.09e-06} & \num{7.86e-04} & \num{9.28e-06} & \num{2.82e-03} & \num{1.05e-03} & \num{5.09e-02} & \num{7.31e-03} \\
       12 & \num{1.87e-08} & \num{1.22e-09} & \num{4.32e-08} & \num{8.73e-09} & \num{5.76e-04} & \num{6.80e-06} & \num{1.45e-03} & \num{5.43e-04} & \num{2.18e-02} & \num{3.13e-03} \\
       13 & \num{5.00e-10} & \num{5.00e-10} & \num{5.00e-10} & \num{2.95e-09} & \num{3.31e-04} & \num{3.91e-06} & \num{6.73e-04} & \num{2.52e-04} & \num{9.95e-03} & \num{1.43e-03} \\
       14 & \num{5.00e-10} & \num{5.00e-10} & \num{5.00e-10} & \num{5.00e-10} & \num{5.95e-05} & \num{7.02e-07} & \num{3.26e-04} & \num{1.22e-04} & \num{5.16e-03} & \num{7.40e-04} \\
       15 & \num{5.00e-10} & \num{5.00e-10} & \num{5.00e-10} & \num{5.00e-10} & -- & --  & \num{1.55e-04} & \num{5.78e-05} & \num{1.79e-03} & \num{2.56e-04} \\
       16 & \num{5.00e-10} & \num{5.00e-10} & -- & --  & -- & --  & -- & --  & \num{3.22e-04} & \num{4.62e-05} \\
       17 & \num{5.00e-10} & \num{5.00e-10} & -- & --  & -- & --  & -- & --  & -- & --  \\
   \bottomrule
   \end{tabular}
   \endgroup
   }
   \caption{Regularization parameters for for experiments in \cref{sec:numerical-exp}.}
   \label{tab:reg_table_1}
\end{table}

\begin{table}[H]
   \centering
   \resizebox{\linewidth}{!}{
   \begingroup
   \begin{tabular}{ccccccccc}
   \toprule
    & \multicolumn{2}{c}{\textbf{CIFAR10}} & \multicolumn{2}{c}{\textbf{STL10}} & \multicolumn{2}{c}{\textbf{arcene}} & \multicolumn{2}{c}{\textbf{dexter}} \\
   \cmidrule(lr){2-3} \cmidrule(lr){4-5} \cmidrule(lr){6-7} \cmidrule(lr){8-9} 
   \(k\) & \(\rho_k\) & \(\delta_k\) & \(\rho_k\) & \(\delta_k\) & \(\rho_k\) & \(\delta_k\) & \(\rho_k\) & \(\delta_k\) \\
   \midrule
       0 & \num{8.00e+00} & \num{8.00e+00} & \num{8.00e+00} & \num{8.00e+00} & \num{8.00e+00} & \num{8.00e+00} & \num{8.00e+00} & \num{8.00e+00} \\
       1 & \num{4.18e+00} & \num{2.27e+00} & \num{2.72e+00} & \num{7.29e-02} & \num{2.85e+00} & \num{2.66e-01} & \num{2.85e+00} & \num{2.66e-01} \\
       2 & \num{7.35e-01} & \num{3.99e-01} & \num{2.18e-02} & \num{5.83e-04} & \num{1.42e-02} & \num{1.33e-03} & \num{1.43e-02} & \num{1.33e-03} \\
       3 & \num{4.01e-01} & \num{2.17e-01} & \num{4.29e-03} & \num{1.15e-04} & \num{8.07e-05} & \num{7.53e-06} & \num{1.28e-04} & \num{1.19e-05} \\
       4 & \num{2.41e-01} & \num{1.31e-01} & \num{2.37e-03} & \num{6.34e-05} & \num{3.54e-06} & \num{3.31e-07} & \num{1.51e-05} & \num{1.41e-06} \\
       5 & \num{1.02e-01} & \num{5.55e-02} & \num{1.40e-03} & \num{2.43e-05} & \num{7.72e-07} & \num{7.20e-08} & -- & --  \\
       6 & \num{4.51e-02} & \num{2.44e-02} & \num{6.85e-04} & \num{1.19e-05} & -- & --  & -- & --  \\
       7 & \num{2.24e-02} & \num{1.22e-02} & \num{2.88e-04} & \num{5.02e-06} & -- & --  & -- & --  \\
       8 & \num{9.58e-03} & \num{5.20e-03} & \num{1.10e-04} & \num{1.92e-06} & -- & --  & -- & --  \\
       9 & \num{3.55e-03} & \num{1.93e-03} & \num{3.69e-05} & \num{6.43e-07} & -- & --  & -- & --  \\
       10 & \num{1.28e-03} & \num{6.92e-04} & \num{3.55e-06} & \num{6.20e-08} & -- & --  & -- & --  \\
       11 & \num{2.53e-04} & \num{1.37e-04} & \num{2.54e-07} & \num{4.43e-09} & -- & --  & -- & --  \\
       12 & \num{3.48e-05} & \num{1.89e-05} & \num{7.18e-09} & \num{5.00e-10} & -- & --  & -- & --  \\
   \bottomrule
   \end{tabular}
   \endgroup
   }
   \caption{Regularization parameters for for experiments in \cref{sec:numerical-exp} (continued).}
   \label{tab:reg_table_2}
\end{table}

\subsection{Spectrums of $AA^T$ and $N_k$}

\Cref{fig:eigvals-dist} illustrates the eigenvalue distributions of $AA^T$ and $N_k$ for SVM problem in \cref{sec:exp-cond}.
Our Nys-IP-PMM converges at $k=12$ for a relative tolerance $\epsilon = 10^{-6}$; and at $k=15$ for $\epsilon = 10^{-8}$.
As the algorithm progresses, the normal equation matrix $N_k$ becomes increasingly ill-conditioned, which is typical for IPMs.
For this problem, the rapid decay of the top eigenvalues of $N_k$ suggests the suitability and success of the Nyström preconditioner.

\begin{figure}
   \centering
   \includegraphics[width=\linewidth]{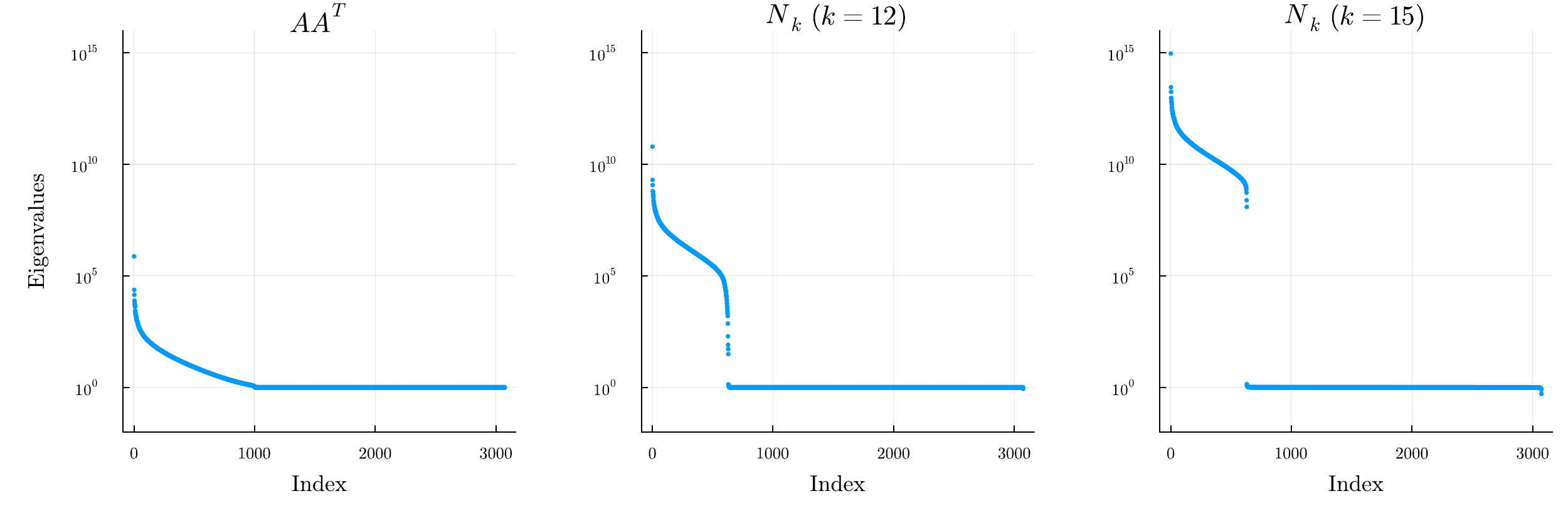}
   \caption{
   Distribution of eigenvalues for $AA^T$ and $N_k$ at different IP-PMM iterations of the SVM problem formed from 1000 samples of CIFAR10.}
   \label{fig:eigvals-dist}
\end{figure}

\end{document}